\numberwithin{equation}{section}
\newtheorem{theorem}{Theorem}
\newtheorem{proposition}[theorem]{Proposition}
\newtheorem{lemma}[theorem]{Lemma}
\theoremstyle{definition}
\theoremstyle{definition}\newtheorem{definition}[theorem]{Definition}
\theoremstyle{definition}\newtheorem{remark}[theorem]{Remark}
\theoremstyle{definition}\newtheorem{example}[theorem]{Example}
\theoremstyle{definition}
\theoremstyle{definition}\newtheorem{assumption}[theorem]{Assumption}
\theoremstyle{definition}
\numberwithin{theorem}{section}
\def\proofof [#1] {\noindent {\bf Proof of #1. } }
\def\v #1.{\mathord{\raise 3pt\hbox{\mathsurround=0pt $\mathop\vee\limits^{#1}$\mathsurround=5pt}}}
\newcommand{\A}{\mathcal{A}}
\newcommand{\B}{\mathcal{B}}
\newcommand{\I}{\mathcal{I}}
\newcommand{\K}{\mathcal{K}}
\renewcommand{\H}{\mathcal{H}}
\newcommand{\M}{\mathcal{M}}
\newcommand{\CN}{\mathcal{N}}
\newcommand{\F}{\mathcal{F}}
\newcommand{\Ring}{\mathcal{R_A}}
\newcommand{\RRing}{\tilde{\mathcal{R}}_{\A}}
\newcommand{\N}{\mathbb{N}}
\newcommand{\NN}{\mathbb{N}_0}
\newcommand{\R}{\mathbb{R}}
\newcommand{\C}{\mathbb{C}}
\newcommand{\Z}{\mathbb{Z}}
\newcommand{\E}{\mathcal{E}}
\newcommand{\unit}{\mathbf{1}}
\newcommand{\bp}{\begin{proof}}
\newcommand{\ep}{\end{proof}}
\newcommand{\bdp}{\begin{dproof}}
\newcommand{\edp}{\end{dproof}}
\newcommand{\ra}{\rightarrow}
\newcommand{\Uone}{\operatorname{U}(1)}
\newcommand{\CAR}{\operatorname{CAR}}
\newcommand{\Loop}{\operatorname{L}}
\newcommand{\Diff}{\operatorname{Diff}(S^1)}
\newcommand{\PSL}{\operatorname{PSL(2,\R)}}
\newcommand{\PSI}{\operatorname{PSL(2,\R)}^{(\infty)}}
\newcommand{\Mat}{\operatorname{M}}
\newcommand{\Ext}{\operatorname{Ext}}
\newcommand{\Vir}{\operatorname{Vir}}
\newcommand{\tr}{\operatorname{tr }}
\newcommand{\Ad}{\operatorname{Ad }}
\newcommand{\rmd}{\operatorname{d}}
\newcommand{\rmi}{\operatorname{i}}
\newcommand{\rme}{\operatorname{e}}
\newcommand{\red}{\operatorname{red}}
\newcommand{\de}{\delta}
\newcommand{\pR}{{\pi_R}}
\newcommand{\sgn}{\operatorname{sgn }}
\newcommand{\dom}{\operatorname{dom }}
\newcommand{\kone}{\mathfrak{K}_{G_{\ell}}^1}
\newcommand{\kG}{\mathfrak{K}_{G_{\ell}}}
\newcommand{\tkone}{\tilde{\mathfrak{K}}_{G_{\ell}}^1}
\newcommand{\tkG}{\tilde{\mathfrak{K}}_{G_{\ell}}}
\newcommand{\kA}{\mathfrak{K}_\A}
\newcommand{\BG}{\operatorname{B}_{G_{\ell}}}
\renewcommand{\ker}{\operatorname{ker }}
\renewcommand{\dim}{\operatorname{dim }}
\newcommand{\Cci}{C^{\infty}}
\newcommand{\diag}{\operatorname{diag}}
\newcommand{\ind}{\operatorname{ind }}
\renewcommand{\lg}{\mathfrak{g}}
\newcommand{\id}{\operatorname{id}}
\newcommand{\ie}{{i.e.,\/}\ }
\newcommand{\eg}{{e.g.\/}\ }
\newcommand{\cf}{{cf.\/}\ }
\author{Sebastiano Carpi}
\thanks{S.C. is supported in part by the ERC advanced grant 669240 QUEST ``Quantum Algebraic
Structures and Models'' and GNAMPA-INDAM}
\address{Dipartimento di Economia, Universit\`a di Chieti-Pescara ``G. d'Annunzio'', Viale Pindaro, 42, 65127 Pescara, Italy\\
E-mail: {\tt s.carpi@unich.it}}
\author{Robin Hillier} 
\address{Department of Mathematics and Statistics, Lancaster University, Lancaster LA1 4YF, UK\\
E-mail: {\tt r.hillier@lancaster.ac.uk}}
\subjclass[2010]{46L87, 46L80, 19K35, 22E67, 81T05. Keywords: conformal nets, fusion ring, spectral triples, JLO cocycles, K-theory.}
\title{Loop Groups and Noncommutative Geometry}
\date{8 October 2018}
\begin{document}

\begin{abstract}
We describe the representation theory of loop groups in terms of K-theory and noncommutative geometry. This is done by constructing suitable spectral triples associated with the level $\ell$ projective unitary positive-energy representations of any given loop group $\Loop G$. The construction is based on certain supersymmetric conformal field theory models associated with $\Loop G$ in the setting of conformal nets. We then generalize the construction to many other rational chiral conformal field theory models including coset models and the moonshine conformal net.
\end{abstract}

\maketitle

\section{Introduction}

Since its foundation by Connes \cite{Con85}, \emph{noncommutative geometry} has been of growing importance, with impact on various fields of mathematics and physics: differential geometry, algebraic topology, index theory, quantum field theory, quantum Hall effect descriptions, etc., to name a few. 
The core idea is to work with an algebraic approach, which is in some sense ``dual'' to a topological one, namely instead of a given locally compact topological space one works with the commutative C*-algebra of continuous functions on it vanishing at infinity. It turns out that this offers several different and useful tools. According to Gelfand-Neimark's theorem, every commutative C*-algebra is actually of this type. To make things meaningful, usually some additional structure is requested to be given alongside the C*-algebra of continuous functions. The step to the noncommutative setting consists now basically in still requiring this further structure but allowing for arbitrary noncommutative C*-algebras (or even more general noncommutative topological algebras) instead of the commutative C*-algebra of continuous functions (\cf \cite{Con94,GBVF} for overview and a comprehensive study).

Examples of such further structures in noncommutative geometry are spectral triples $(A,(\pi,\H),D)$, where apart from the algebra $A$, a representation $\pi$ on some Hilbert space $\H$ and a selfadjoint operator $D$ with compact resolvent on $\H$ are given. In the commutative case such a triple together with some additional data completely describes a smooth compact manifold, according to Connes's reconstruction theorem \cite{Con13}. In the noncommutative case, this is by far not enough to understand the complete structure well, but it suffices in order to partially understand the noncommutative geometry of given objects, and to compute K-homology classes, noncommutative Chern characters and index pairings with K-theory. There is a bivariant version of K-theory, called KK-theory: a bifunctor from the category of C*-algebras to abelian groups. KK-theory plays a fundamental role in the structure theory of C*-algebras and in noncommutative geometry; apart from the inherent group addition, it admits a so-called intersection product, and it generalizes both K-theory and K-homology. The operator algebraic nature of noncommutative geometry has enabled many fruitful connections to other areas in mathematics and physics (\cf again \cite{Con94,GBVF}). In this article we establish a link with  the representation theory of loop groups.

\emph{Loop groups} are well-studied examples of infinite-dimensional Lie groups \cite{Milnor,PS}. Given a smooth compact manifold $X$ and a connected simply connected compact simple Lie group $G$ with Lie algebra $\lg$, a natural object to investigate is the group 
$\Cci(X,G)$ of smooth maps $X\ra G$, with point-wise multiplication and with the $C^\infty$ topology (uniform convergence of all partial derivatives). It is an infinite-dimensional Lie group modeled on the topological vector space $\Cci(X,\lg)$ of smooth maps $X\ra \lg$. 
Moreover, the Lie algebra of $\Cci(X,G)$ turns out to be $\Cci(X,\lg)$ with point-wise brackets \cite[Example 1.3]{Milnor}. Loop groups are obtained in the special case where $X=S^1$. Accordingly the loop group of $G$ is given by $\Loop G := \Cci(S^1,G)$ and it is an infinite-dimensional Lie group with Lie algebra $\Cci(S^1,\lg)$ \cite{PS}. The latter admits nontrivial central extensions corresponding to the affine Kac-Moody Lie algebras associated with $\lg$ \cite{Kac,PS}. 

Loop groups are important objects to study in mathematics: they play a fundamental role in conformal quantum field theory and string theory. They have shown deep relations with various other mathematical areas such as number theory, subfactor theory, quantum groups and topological quantum field theories, see e.g. \cite{BK,EK1998,Frenkel,Fuchs,Gannon,WasICM}.

Loop groups also have a very interesting representation theory. 
It is obtained by restricting to a special class of (projective) unitary representations, namely the so-called positive-energy representations, or more precisely, those arising by integrating unitary highest weight irreducible representations of the underlying affine Kac-Moody algebra $\hat{\lg}_\C$. There is a distinguished central element $c_\lg$ in $\hat{\lg}_\C$ and it takes positive integer value in such a representation; this integer is called the level of the representation. For every given level $\ell$ there are only finitely many classes of such representations; remarkably, they generate a 
commutative ring $R^{\ell}(\Loop G)$, the so-called Verlinde fusion ring. The ring structure comes from the operator product expansion in the conformal field theory model associated to the representation theory of $\Loop G$ at level $\ell$.  Mathematically this can be described in various ways. For example the ring product can be defined through the so called Verlinde formula from the modular invariance property of the characters of the representations, see e.g. \cite[Chap.5]{Fuchs} and \cite[Chap. 6]{Gannon}. 
More conceptually $R^{\ell}(\Loop G)$ can be defined as the set of equivalence classes of objects in the braided tensor category of level $\ell$ representations of 
$\Loop G$. The tensor structure on the latter category can be obtained e.g. through the Huang-Lepowsky theory of tensor products for vertex operator algebra modules \cite{HL1999}, see also \cite[Chap. 6]{Gannon}. For explicit computations of the fusion rings see \cite{Douglas}.

In this paper we give a description of the Verlinde fusion ring  $R^{\ell}(\Loop G)$ in terms of K-theory and noncommutative geometry for any given connected simply connected compact simple Lie group $G$ and positive integer level $\ell$. 
The first step in this direction is the definition of a natural and canonical universal C*-algebra $\kG$  for the level $\ell$ positive-energy representation theory of $\Loop G$. To this end we use the theory of conformal nets and follow the ideas in \cite{CCHW}. More precisely, we have to assume that the conformal net $\A_{G_{\ell}}$ associated with the representation theory of $\Loop G$ at level $\ell$ is completely rational and that the corresponding ring generated by the Doplicher-Haag-Roberts (DHR) endomorphisms is isomorphic to $R^{\ell}(\Loop G)$. This assumption is known to be satisfied e.g. for $G=\mathrm{SU}(n)$ at any positive integer level, and it is widely expected to be true in general although this is still a very important open problem which goes  far beyond the scope of this paper, see e.g. \cite[Problem 3.32]{Kaw2015} and the discussion following Assumption \ref{CFT-assumption} here below. We will make this assumption throughout the paper. The universal C*-algebra $\kG$ is then defined to be the \emph{compact universal C*-algebra} 
$\mathfrak{K}_{\A_{G_{\ell}}}$ introduced in \cite{CCHW} which is a natural (nonunital) universal C*-algebra for the representation theory of the net $\A_{G_{\ell}}$ and hence for the level $\ell$ positive-energy representation theory of $\Loop G$. It then follows from the results in \cite{CCHW} that there is a group isomorphism of $R^{\ell}(\Loop G)$ onto the K-theory group  $K_0(\kG)$. The latter can be seen as an analogue of the isomorphism $R(G) \simeq K_0(C^*(G))$ where $R(G)$ is the representation ring of $G$ and $C^*(G)$ is the group C*-algebra, which actually holds true for every compact group $G$.   

Then we define a  suitable norm dense subalgebra $\kone\subset \kG$ and, for any irreducible level $\ell$ representation of $\Loop G$, we construct a spectral triple with algebra $\kone$ and show that the entire cyclic cohomology class of the corresponding JLO cocycle completely determines the unitary equivalence class of the loop group representation. These spectral triples give rise to a group isomorphism from the level $\ell$ Verlinde fusion ring $R^{\ell}(\Loop G)$ of $\Loop G$ onto the K-homology group $K^0(\kG)$ which is dual through the index pairing to the isomorphism $R^{\ell}(\Loop G) \simeq K_0(\kG)$. 

The construction of the above spectral triples is based on the supersymmetric chiral conformal field theory  (CFT) models associated to the loop group representations. The above construction naturally identifies the additive group underlying the fusion ring  
$R^{\ell}(\Loop G)$ with the  K-groups $K^0(\kG)$ and $K_0(\kG)$. A K-theoretical description of the fusion product can know be given following the ideas in \cite{CCHW,CCH}. The DHR endomorphisms  of the net $\A_{G_{\ell}}$ give rise to endomorphisms of the C*-algebra $\kG$ and, as a consequence, an injective ring homomorphism from $R^{\ell}(\Loop G)$ into $KK(\kG,\kG) \simeq \mathrm{End}(K_0(\kG)) \simeq \mathrm{End}(K^0(\kG))$ so that the fusion product can be naturally described in terms of the Kasparov product in KK-theory.

These results are deeply related to the noncommutative geometrization program for CFT recently developed in \cite{Lo01,CKL,CHL,CHKL,CHKLX}, cf. also \cite{RV2014a,RV2014b} for related work. Actually, this program was one of the initial motivations for the present article. We obtain here for the first time examples for which this noncommutative geometrization program can be carried out for all sectors of the corresponding conformal nets. This means that all irreducible sectors can be separated by JLO cocycles associated to spectral triples naturally arising from supersymmetry.  
The central new idea which allows us to overcome certain technical difficulties found \eg in \cite{CHKLX,CHL} is the one to consider spectral triples associated to degenerate representations. As a consequence we can relax the requirement of superconformal symmetry by considering superconformal tensor product extensions, the {\em superconformal tensor product} considered in Section  \ref{sec:nonsymplyconnected}. By the results in the latter section these examples include quite a large family  of completely rational conformal nets beyond loop group conformal nets. Another consequence of the results of this paper is that the noncommutative geometric description of the representation theory of these conformal nets is directly related to the K-theoretic description recently given in \cite{CCH,CCHW}.

A different K-theoretical description of the fusion ring $R^{\ell}(\Loop G)$  has been given by Freed, Hopkins and Teleman (FHT)
\cite{FHT2008,FHT2011a,FHT2011b,FHT2013}, cf. also \cite{Mick2004,Mick2005} for related work. In those papers the Verlinde fusion ring of $\Loop G$ at level $\ell$ has been identified with a twisted version of the equivariant topological K-theory group $K_G(G)$, where $G$ acts on itself by conjugation. The twisting is determined by the level $\ell$. Under this identification the fusion product corresponds to the convolution (Pontryagin) product on the twisted $K_G(G)$ and the latter can be defined without direct reference to the Verlinde fusion product in $R^{\ell}(\Loop G)$ \cite[Thm.1]{FHT2011b},  
see also \cite{FHT2011a}.
Very interesting relations of this result with subfactor theory and modular invariants have been investigated by Evans and Gannon \cite{Evans2006,EG2009,EG2013}.

Our results indirectly show the identification of the above twisted equivariant K-theory of $G$ with the K-theory group $K_0(\kG)$ of the C*-algebra $\kG$. Moreover there are various structural similarities between our approach and FHT. Our Dirac operators are essentially the same as those considered in \cite[Sec.11]{FHT2011b}, see also \cite{FHT2013}. In our case the Dirac operators are used to construct 
spectral triples with algebra $\kone$ in order to obtain K-homology classes $K^0(\kG)$. In the FHT case the Dirac operators are used to construct equivariant Dirac families which give rise to twisted equivariant K-theory classes.

We believe it would be very interesting to exploit this relationship deeper and more explicitly in future. In particular we believe that a better understanding of this relationship would shed more light on these K-theoretical approaches to the representation theory of loop groups and in particular to its deep mathematical relations with conformal field theory. More generally, it could reveal new connections between twisted K-theory, non-commutative geometry and subfactor theory. Moreover it could shed  some light on the relation of the FHT convolution product in equivariant twisted K-theory and the Kasparov product defined in \cite{TuXu}. From this point of view our analysis should not be seen as alternative to the FHT work but rather complementary to it. It should be pointed out that the FHT analysis goes beyond the case in which $G$ is a simply connected compact Lie group and actually it also deals with non-connected compact Lie groups (in particular with finite groups) and twisted loop groups. On the other hand we give in this paper an abstract version of our construction, formulated in terms of completely rational conformal nets, which applies to many conformal field theory models besides the case of loop groups. In particular we can cover the case of loop groups associated to arbitrary connected compact Lie groups but also coset models, minimal models and the conformal net analogue of the Frenkel-Lepowsky-Meurman moonshine vertex operator algebra  
\cite{FLM} constructed by Kawahigashi and Longo in \cite{KL06}. We should also point out that the K-theoretical description of the DHR fusion ring in \cite{CCH,CCHW} is very general since it works for all completely rational conformal nets.

Our paper is organized as follows. We start off with a preliminary section on noncommutative geometry for nonunital algebras, which is essential to understand our results and seems to be difficult to find in literature in this form. We also include some KK-theory basics at the end of that section.

In Section \ref{sec:DHR} we provide an introduction to loop groups and to conformal nets in general and explain how the relationship with KK-theory emerges naturally, \cf \cite{CCH,CCHW}. We then discuss the special case of loop group conformal nets, which provides the basis for the subsequent section.

The main Section \ref{sec:JLO} then deals with the construction of our noncommutative geometric objects: for a given connected simply connected compact simple Lie group $G$ and hence for its  corresponding loop group 
$\Loop G$, we construct Dirac operators and a global differentiable algebra $\kone$ on which the Dirac operators act; they give rise to spectral triples, noncommutative Chern characters (also known as JLO cocycles) and index parings with K-theory classes corresponding to characteristic projections in $\kone$. The underlying ideas in the construction are supersymmetry and conformal nets: tensoring the loop group by the (graded) CAR algebra, we turn the given representations into graded representations; for the latter special representations, the so-called super-Sugawara (or Kac-Todorov) construction \cite{KT}
guarantees supersymmetry and Dirac operators. The differentiable algebra $\kone$ comes out as a natural byproduct. 
Moreover, the theory of conformal nets explicitly identifies the fusion product as a Kasparov product. 

In Section \ref{sec:nonsymplyconnected} we introduce the notion of superconformal tensor product for conformal nets that can be considered as a generalization of the super-Sugawara construction for loop groups models. We show that our previous analysis for loop groups $\Loop G$ with $G$ 
simply connected, extends in a rather straightforward way to the case of completely rational conformal nets with superconformal tensor product.
We then provide many examples of completely rational conformal nets admitting a superconformal tensor product as announced above.

\section{Entire cyclic cohomology for nonunital Banach algebras}\label{sec:prelim}

Let $(A,\|\cdot\|)$ be a Banach algebra. If $A$ is nonunital, its \emph{unitalization} $\tilde{A}$ is obtained by adjoining a unit $\unit_{\tilde{A}}$ to $A$, with multiplication defined by
\[
(a_1+t_1\unit_{\tilde{A}})\cdot (a_2+t_2\unit_{\tilde{A}}) := a_1a_2+t_1a_2+t_2a_1 + t_1t_2\unit_{\tilde{A}}, 
\]
and norm $\|a+t\unit_{\tilde{A}}\|^\sim :=\|a\|+|t|$, for every $a+t\unit_{\tilde{A}}\in\tilde{A}$. Then $A\subset \tilde{A}$ is a closed ideal and $\tilde{A}/A\simeq\C$. If $\|\cdot\|$ was a C*-norm, one can construct a C*-norm on $\tilde{A}$ equivalent to $\|\cdot\|^\sim$, \cf \cite[II.1.2]{Bl2}. However, we will not need this. Moreover, we shall drop the superscript ``$\sim$'' of $\|\cdot\|^\sim$ henceforth; analogously for subscripts of $\|\cdot\|$ indicating the algebra whenever they appear and confusion is unlikely. If $\pi:A\ra B(\H_\pi)$ is a representation, then it extends to a unital representation $\tilde{\pi}$ of $\tilde{A}$ by $\tilde{\pi}(a+t\unit_{\tilde{A}}):= \pi(a) + t\unit_{\H_\pi}$ and we shall always work with this extension. 

For every $r\in\N$, let $\Mat_r(A)$ be the algebra of $r\times r$ matrices with entries in $A$. 
The maps $x\mapsto \diag(x,0)$ define natural embeddings $\Mat_r(A) \ra \Mat_{r+1}(A), r\in \N$. Then  $\Mat_\infty(A)$ is defined as the corresponding inductive limit, \ie the algebra of infinite-dimensional matrices with entries in $A$ and all but finitely many ones of them zero \cite[5.1]{Bla}. 
The algebras $M_r(A), r \in \N$, can be made into  Banach algebras in many equivalent ways. 
One can choose the norms on $M_r(A)$ in such a way that the embeddings are isometries. Here we fix any sequence of such norms. It induces a norm on $M_\infty(A)$ which is accordingly made into a normed algebra, too. What follows in this paper does not depend on the particular choice of the norms. 

$A$ is called \emph{stably-unital} if $\Mat_\infty(A)$ has an approximate identity of idempotents; this is \eg the case if $A$ has an approximate identity of idempotents \cite[5.5]{Bla}.

Entire cyclic cohomology is usually studied in the setting of unital Banach algebras. A definition for nonunital ones using the cyclic cocomplex can be found in detail in \cite[Sect.6]{Cu}. The subsequent one following \cite[Sect.2]{Kha} is stated in the $(b,B)$-bicomplex, which is more suitable in our setting since we want to apply it to the JLO cochain. The two definitions are equivalent according to \cite[Sect.2]{Kha} together with \cite[Prop.4.2]{LQ}.

\begin{definition}\label{def:ECC}
(i) Let $A$ be a nonunital Banach algebra and, for any nonnegative integer $n$, let $C^n(A)$ be the vector space of reduced $(n+1)$-linear forms $\phi_n$ on the unitalization $\tilde{A}$, \ie such that
\begin{itemize}
\item[-] $\phi_0(\unit_{\tilde{A}})=0$,
\item[-] $\phi_n(a_0,a_1, ...,a_n)=0$ if $a_i= \unit_{\tilde{A}}$ for some $1\leq i \leq n$ (\emph{simplicial normalization} \cite{Brodzki}).
\end{itemize} 
For integers $n < 0$ we set $C^n(A) := \{0\}$. Let $C^\bullet(A) := \prod_{n=0}^\infty C^n(A)$ be the space of sequences $\phi= (\phi_n)_{n \in \N_0}$, with $\phi_n \in C^n(A)$ and define the operators $b:C^\bullet(A) \to C^\bullet(A)$ and $B:C^\bullet(A) \to C^\bullet(A)$ by 

\begin{align*}
(b\phi)_n (a_0, ..., a_n) :=& \sum_{j=0}^{n-1} (-1)^j \phi_{n-1}(a_0, ..., a_j a_{j+1}, ..., a_n)\\
& + (-1)^n \phi_{n-1}(a_n a_0, a_1,  ..., a_{n-1}),\\
(B\phi)_n(a_0, ..., a_n)  := & \sum_{j=0}^n (-1)^{jn} \phi_{n+1}(\unit_{\tilde{A}}, a_j, ..., a_n, a_0, ..., a_{j-1}), \quad a_0,\ldots,a_n \in \tilde{A}.
\end{align*}

The linear map $\partial: C^\bullet(A) \ra C^\bullet (A)$ defined by $\partial := b + B$ satisfies $\partial^2=0$ and, with 
 the coboundary operator $\partial$, $C^\bullet(A)$ becomes the \emph{cyclic cocomplex} $C^\bullet(A) = (C^e(A),C^o(A))$ over $\Z/2\Z$, namely the elements of  $C^e(A)= \prod_{n=0}^\infty C^{2n}(A)$ (the {\it even cochains})
are mapped into the elements of $C^o(A)= \prod_{n =0}^\infty C^{2n+1}(A)$ (the {\it odd cochains}) and vice versa. The elements $\phi\in C^\bullet(A)$ satisfying $\partial \phi=0$ are called the \emph{cyclic cocycles of $A$}.

(ii) A cochain $\phi= (\phi_n)_{n\in\NN} \in C^\bullet(A)$ is called \emph{entire} if
\[
 |\phi_n(a_0,...,a_n)| \le \frac{1}{\sqrt{n!}} \|a_0\|\cdots\|a_n\|, \quad a_i\in \tilde{A}, n\in\NN.
\]
Letting $CE^\bullet(A)$ be the entire elements in $C^\bullet(A)$, one defines the \emph{entire cyclic cohomology} $(HE^e(A), HE^o(A))$ of $A$ as the cohomology of the cocomplex $((CE^e(A),CE^o(A)), {\partial})$. The cohomology class of an entire cyclic cocycle $\phi \in CE^\bullet(A) \cap \ker (\partial)$ will be denoted by $[\phi]$.
\end{definition}

We recall that in the case of unital $A$ instead no unitalization is considered; $C^n(A)$ stands then for the simplicially normalized (not necessarily reduced) cochains on $A$ itself, and entire cyclic cohomology is defined accordingly  \cite{Cu}.

\begin{definition}\label{def:K0}
\cite[Sect.5.5]{Bla} Let $A$ be a stably-unital Banach algebra. 
We denote by $P\Mat_r(A)$ the set of idempotents in $\Mat_r(A)$, $r\in \N \cup \{\infty\}$. An equivalence relation on $P\Mat_\infty(A)$ is defined by $p \sim q$ if there are $x, y \in \Mat_\infty(A)$ such that $p=xy$, $q=yx$. There is a binary operation 
\[
(p_1,p_2)\in P\Mat_{r_1}(A) \times P\Mat_{r_2}(A)  \mapsto  p_1\oplus p_2 := \diag(p_1, p_2) \in P\Mat_{r_1+r_2}(A),
\]
which turns $P\Mat_\infty(A)/\sim$ into an abelian semigroup. 
Then the \emph{$K_0$-group of $A$} is defined as
\[
  K_0(A) := \textrm{Grothendieck group of } P\Mat_\infty(A)/\sim.
\]
We write $[p]$ for the element in $K_0(A)$ induced by a projection $p\in A$.
\end{definition}

\begin{definition}\label{def:SpTr}
A \emph{$\theta$-summable even spectral triple} is a triple $(A,(\pi,\H),D)$, where
\begin{itemize}
\item[-] $A$ is an algebra;
\item[-] $\H$ is a Hilbert space graded by a selfadjoint unitary $\Gamma$, and $\pi$ is a representation of $A$ on $\H$ commuting with $\Gamma$;
\item[-] $D$ is a self-adjoint operator with $\rme^{-tD^2}$ trace-class for all $t>0$, with $\Gamma D \Gamma =-D$, and with $\pi(A)\subset \dom(\delta_D)$, where $\delta_D$ is the derivation on $B(\H)$ induced by $D$ as explained below.
\end{itemize}
\end{definition}
Note that, according to the above definition, if $A$ is nonunital then $(A,(\pi,\H),D)$ is a $\theta$-summable even spectral triple
if and only if $(\tilde{A},(\tilde{\pi},\H),D)$ is. 
Henceforth we shall deal with spectral triples in the case where $A$ is a nonunital Banach algebra.

Given a self-adjoint operator $D$ on $\H$, one associates a derivation $\delta_D$ of $B(\H)$ as follows: define $\dom(\delta_D)$ as the algebra of $x\in B(\H)$ such that
\[
x D \subset D x - y
\]
for some $y\in B(\H)$, and $\delta_D(x):=y$ in this case, \cf \cite[Sect.3.2]{BR}.

The \emph{JLO cochain} $\tau=(\tau_{n})_{n\in 2\NN}$ of a spectral triple $(A,(\pi,\H),D)$ with $A$ a nonunital Banach algebra is defined as $\tau:= \tilde{\tau}-\psi$, where
\begin{equation}\label{eq:JLO-def}
\begin{aligned}
 \tilde{\tau}_{n}(a_0,\ldots,a_n) =
\int_{0\le t_1\le ...\le t_n \le 1}\tr \Big( & {\Gamma} \tilde{\pi}(a_0) \rme^{-t_1 D^2} \delta_D(\tilde{\pi}(a_1)) \rme^{-(t_2-t_1) D^2} \cdots \\
& \cdots \, \delta_D(\tilde{\pi}(a_n))\rme^{-(1-t_n) D^2} \Big) \rmd t_1 \cdots \rmd t_n
\end{aligned}
\end{equation}
and
\[
\psi_n(a_0,...,a_n):= \begin{cases}
t \tilde{\tau}_0(\unit_{\tilde{A}}) & \mathrm{if} \; n=0,\; a_0 \in t\unit_{\tilde{A}} + A \\
0 & \mathrm{if} \; n>0,
\end{cases}
\]
for every $a_i\in \tilde{A}$ and every $n\in\NN$. Clearly, in restriction to entries in $A$, we have $\tau\restriction_A = \tilde{\tau}\restriction_A$. 

Notice that in the common context of unital algebras and spectral triples (\eg in \cite{CHL}), $\tilde{\tau}$ would be a simplicially normalized (but not reduced) cochain on $A$ itself, namely the original JLO cochain from \cite{JLO}.

Now, let $(A , (\pi,\H), D)$ be a $\theta$-summable even spectral triple with grading operator $\Gamma$. 
Let $\H_\pm = \ker (\Gamma \mp \unit_{B(\H)})$ so that we have the decomposition $\H= \H_+ \oplus \H_- $.
If $T$ is a densely defined operator on $\H$ which is odd, i.e., such that $\Gamma T \Gamma = -T$, 
then we can write 
$$T = \left( 
\begin{array}{cc}
0 & T_-  \\
T_+ & 0 
\end{array} \right), $$ 
with operators $T_\pm$ from (a dense subspace of) $\H_\pm$ to $\H_\mp$.
 Accordingly, if $(A , (\pi,\H), D)$ is a $\theta$-summable even spectral triple then for the selfadjoint $D$
we can write 
$$D = \left( 
\begin{array}{cc}
0 & D_-  \\
D_+ & 0 
\end{array} \right), $$ 
with $D_-= D_+^*$. On the other hand, if $T$ is even, i.e., it commutes with $\Gamma$, then we can write 
$$T = \left( 
\begin{array}{cc}
T_+ & 0  \\
0 & T_- 
\end{array} \right), $$ 
with operators $T_\pm$ from (a dense subspace of) $\H_\pm$ to $\H_\pm$.

For any positive integer $r$ we denote by $\pi_r$ the representation of $\Mat_r(A)$ on $\H_r:= \C^r \otimes \H$ defined by $\pi_r(m \otimes a) := m \otimes \pi(a)$, $m \in \Mat_r(\C)$, $a\in A$.  Moreover, for every operator $T$ on $\H$ we consider the operator $T_r := \unit \otimes T$ on $\H_r$. Then, for every $r\in \N$,  $(\Mat_r(A) , (\pi_r,\H_r), D_r)$ is a $\theta$-summable even spectral triple which is even with grading operator $\Gamma_r$.
For an $n$-linear form $\phi_{n-1}:\tilde{A}^{\otimes n}\ra\C$, we introduce the $n$-linear form $\phi_{n-1}^r$ on $(\Mat_r(\C)\otimes\tilde{A})^{\otimes n}= \Mat_r(\tilde{A})^{\otimes n}$ as
\[
 \phi_{n-1}^r (m_0 \otimes a_0, .... ,m_n\otimes a_n) :=  \tr (m_0...m_n)  \phi_{n-1}(a_0, .... ,a_n)
\] 
and linear extension.

The following fundamental theorem is an adaptation to the present nonunital setting of the corresponding unital setting, \cf \eg \cite{Con88,JLO,GS,CHL}.
 
\begin{theorem}\label{th:ECCJLO}
Let $(A,(\pi,\H),D)$ be a $\theta$-summable even spectral triple such that $A$ is a nonunital Banach algebra and the representation $\pi$ of $A$ in the Banach algebra $(\dom (\delta_D),\|\cdot\|+\|\delta_D(\cdot)\|_{B(\H)})$ is continuous.
\begin{itemize}
\item[(i)] The cochain $(\tau_{n})_{n\in 2\NN}$ is an even entire cyclic cocycle on $A$. We call it  the \emph{JLO cocycle} or \emph{Chern character} of $(A,(\pi,\H),D)$.
\item[(ii)] The values of the maps  $(\phi,p) \in \left( CE^e(A) \cap \ker (\partial)  \right) \times P\Mat_r(A) \mapsto \phi (p) \in \C$, 
$r \in \N$, where 
\[ 
\phi (p) := \phi_0^{r}(p) + \sum_{k=1}^\infty (-1)^k \frac{(2k)!}{k!} \phi_{2k}^{r}((p-\frac12),p,...,p) , 
\]
only depend on the cohomology class $[\phi]$ of the entire cyclic cocycle $\phi$ and on the $K_0(\tilde{A})$-class $[p]$ of the idempotent $p$ and are additive on the latter. If $A$ is stably-unital, this gives rise to a pairing $\langle [\phi], [p] \rangle := \phi(p)$ between the even entire cyclic cohomology $HE^e(A)$ and K-theory $K_0(A)$. Moreover, the operator
$\pi_r(p)_{-} {D_r}_{+} \pi_r(p)_+$ from $\pi_r(p)_+{\H_r}_+$ to $\pi_r(p)_-{\H_r}_-$ is a Fredholm  operator and for the even JLO cocycle 
$\tau$ we have
\[
\tau (p) = \langle [\tau], [p] \rangle = \ind_{\pi_r(p)_+{\H_r}_+}
\big( \pi_r(p)_{-} {D_r}_{+} \pi_r(p)_+ \big) \in \Z .
\]
\end{itemize}
\end{theorem}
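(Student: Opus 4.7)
The plan is to reduce both parts to the classical unital JLO framework \cite{JLO,Con88,GS} by passing to the unitization $\tilde{A}$, and to verify that the correction cochain $\psi$ enforces reducedness $\tau_0(\unit_{\tilde{A}})=0$ without destroying the cocycle property. For part (i), I would first check that $\tau \in C^\bullet(A)$. Simplicial normalization at degrees $n\geq 1$ is immediate from $\delta_D(\tilde{\pi}(\unit_{\tilde{A}}))=0$, which annihilates every term in \eqref{eq:JLO-def} whenever some $a_i=\unit_{\tilde{A}}$ with $i\geq 1$; reducedness at $n=0$ follows at once from the definition $\tau_0=\tilde{\tau}_0-\psi_0$, since $\psi_0(\unit_{\tilde{A}})=\tilde{\tau}_0(\unit_{\tilde{A}})$.

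For the entire bound, I would apply the standard JLO/Duhamel estimate on each simplex integrand: cyclicity of the trace together with H\"older's inequality in Schatten norms gives a bound on $|\tilde{\tau}_n(a_0,\ldots,a_n)|$ proportional to $\|a_0\|_D\cdots\|a_n\|_D /\sqrt{n!}$, where $\|x\|_D:=\|x\|+\|\delta_D(x)\|_{B(\H)}$. The assumed continuity of $\pi\colon A\to(\dom(\delta_D),\|\cdot\|_D)$ then converts this into an entire bound on $A$, up to absorbing the continuity constant into an equivalent Banach algebra norm on $\tilde{A}$ (a rescaling which does not affect any cohomology classes). The cocycle identity $(b+B)\tau=0$ splits as follows: a direct calculation gives $B\psi=0$ (because $\psi_n=0$ for $n\geq 1$) and $b\psi=0$ (the only potentially nontrivial degree is $n=1$, and there $\psi_0(a_0a_1-a_1a_0)$ vanishes since commutators in $\tilde{A}$ lie in $A$). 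Consequently the identity reduces to $(b+B)\tilde{\tau}=0$, which is the classical unital JLO cocycle identity, proved by Duhamel expansion of $\rme^{-tD^2}$ together with integration by parts on the $n$-simplex, the boundary terms arising from $B\tilde{\tau}$ cancelling the simplicial differentials appearing in $b\tilde{\tau}$.

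For part (ii), the invariance of $\phi(p)$ under cohomology and under the equivalence relation defining $K_0(A)$ is essentially algebraic once the pairing formula is available: one verifies $\phi=\partial\eta\Rightarrow\phi(p)=0$ using $p^2=p$ and the explicit forms of $b\eta,B\eta$, stabilization $p\mapsto\diag(p,0)$ is transparent, and similarity invariance reduces to cohomological invariance via a short standard argument; additivity on $\oplus$ follows from the block-diagonal structure and additivity of $\tr$. To identify $\tau(p)$ with the Fredholm index, I would use $\theta$-summability to see that $\rme^{-tD_r^2}$ is trace class, so that $\pi_r(p)_-{D_r}_+\pi_r(p)_+$ admits a parametrix coming from the compression of the resolvent of $D_r$ and is therefore Fredholm. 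The index identity is then obtained by rescaling $D\rightsquigarrow sD$: the entire cyclic cohomology class $[\tau^{sD}]$ is independent of $s>0$, and as $s\to\infty$ the pairing $\tau^{sD}(p)$ collapses, by dominated convergence on the simplex integrands, to the McKean--Singer expression for $\ind(\pi_r(p)_-{D_r}_+\pi_r(p)_+)$.

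The main obstacle I anticipate is the tight interplay between unitization, the reducedness constraint $\tau_0(\unit_{\tilde{A}})=0$, and the JLO cocycle identity: the subtraction of $\psi$ must simultaneously secure reducedness, preserve the entire estimate (which is automatic since $\psi$ is supported in degree zero and bounded by $|\tilde{\tau}_0(\unit_{\tilde{A}})|$), and commute with $\partial$. Verifying $b\psi=B\psi=0$ is short but is precisely where the nonunital adaptation is absorbed. A secondary subtlety is ensuring the Fredholm index formula passes cleanly from genuine projections to arbitrary idempotents $p\in P\Mat_r(A)$, as required for the $K_0$-pairing; this is handled by passage to a Murray--von Neumann equivalent projection, under which both sides of the identity are invariant.
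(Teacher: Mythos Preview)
Your proposal is correct and follows essentially the same strategy as the paper: reduce everything to the classical unital JLO theory on $\tilde{A}$, verify that $\psi$ is itself an entire cyclic cocycle so that $\tau=\tilde{\tau}-\psi$ inherits the cocycle property, and check that the subtraction enforces reducedness. The only minor difference is one of emphasis: for the well-definedness of the pairing on $K_0(A)$ the paper simply invokes the fact that idempotents equivalent in $\Mat_\infty(A)$ remain equivalent in $K_0(\tilde{A})$ (so the unital pairing restricts), whereas you sketch the invariance directly, and you unpack the Duhamel/Getzler--Szenes arguments that the paper just cites --- but the architecture is the same.
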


\begin{proof}
For unital algebras and spectral triples this theorem is well-known, namely the JLO cochain $\tilde{\tau}$ is an even entire cyclic cocycle on the unital algebra $\tilde{A}$ \cite{Con94,GS,JLO}. The same is true for $\psi$ as is easily verified. $\tau=\tilde{\tau}-\psi$ is again an entire cyclic cocycle of $\tilde{A}$ because both $\tilde{\tau}$ and $\psi$ are so. Evenness is obvious, \ie $\tau_n=0$ if $n\in\NN$ is odd. For every $n\in\NN$, $\tau_n$ is reduced, \ie $\tau_0(\unit_{\tilde{A}})=0$ and $\tau_n(a_0,a_1, ...,a_n)=0$ if $a_i= \unit_{\tilde{A}}$ for some $1\leq i \leq n$. Thus $\tau$ is an even entire cyclic cocycle of $A$, proving (i). 

Part (ii) becomes clear, too, by considering first all the statements and the well-known pairing between entire cyclic cohomology and K-theory (\cf \eg \cite{Con88,Con94,JLO,GS,CHL}) for the unital algebra $\tilde{A}$, namely: $\tau$ pairs with $K_0(\tilde{A})$ as
\[ 
\tau (p) := \tau_0^{r}(p) + \sum_{k=1}^\infty (-1)^k \frac{(2k)!}{k!} \tau_{2k}^{r}((p-\frac12),p,...,p) 
\]
if $p\in\Mat_r(\tilde{A})$ represents a class in $K_0(\tilde{A})$, and $\tau(p_1)=\tau(p_2)$ for projections $p_1,p_2\in\Mat_\infty(\tilde{A})$ if $p_1$ and $p_2$ belong to the same class in $K_0(\tilde{A})$. Finally, if $A$ is stably-unital, then $K_0(A)$ is defined according to Definition \ref{def:K0} using equivalence classes of projections in $\Mat_\infty(A)$; if moreover $p_1,p_2\in\Mat_\infty(A)\subset\Mat_\infty(\tilde{A})$ and give rise to the same class in $K_0(A)$ then according to \cite[5.5.2]{Bla} they have the same class in $K_0(\tilde{A})$, so $\tau(p_1)=\tau(p_2)$ according to the previous step. Thus the pairing of $\tau$ with $K_0(\tilde{A})$ restricts to a pairing with $K_0(A)$. The Fredholm property and the final formula follow directly from the unital case by restriction.
\end{proof}

In the following sections we will look at this kind of pairing from the point of view of Kasparov's KK-theory, for which we are now providing a few preliminaries based on the summary in \cite{CCH}. A thorough introduction with proofs can be found in \cite[Ch.17\&18]{Bla}.

Let now $A,B$ be stably-unital separable C*-algebras, then a \emph{Kasparov $(A,B)$-module} is a tuple $(\E,\phi,F)$, where $\E$ is a countably generated $\Z_2$-graded Hilbert $B$-module, $\phi:A\ra B(\E)$ is a graded *-homomorphism, and $F\in B(\E)$ has degree one, such that
\begin{equation}\label{eq:Fredholm}
  (F-F^*)\phi(a), \quad (F^2-\unit) \phi(a), \quad [F,\phi(a)]
\end{equation}
lie all in the compact operators $K(\E)$ (the norm closure of finite-rank operators) on $\E$, for all $a\in A$. 
With a suitable concept of homotopy, one defines $KK(A,B)$ as the set of homotopy equivalence classes of Kasparov $(A,B)$-modules. E.g. for every unitary $u\in B(\E)$ of degree zero,  $(\E,\Ad(u)\circ\phi,\Ad(u)(F))$ is again a Kasparov $(A,B)$-module, which is homotopy equivalent to $(\E,\phi,F)$, and therefore the two give rise to the same element in $KK(A,B)$. We write $\H_A$ for the standard right Hilbert $A$-module $A\otimes l^2(\Z)$ and $\hat{\H}_A$ for the corresponding $\Z/2$-graded one $\H_A\oplus\H_A$ with grading $\unit\oplus-\unit$.
 
For our immediate purposes in the following sections, the most relevant facts about KK-theory can be summarized as follows, with $A,B,C$ stably-unital separable C*-algebras:
\begin{itemize}
\item[$(1)$] There is a direct sum for Kasparov $(A,B)$-modules, which passes to the quotient $KK(A,B)$ and turns $KK(A,B)$ into an abelian group.
\item[$(2)$] There is a canonical identification of $KK(\C,A)$ with the K-theory group $K_0(A)$ (as additive groups). Similarly, there is a canonical identification of $KK(A,\C)$ with the K-homology group $K^0(A)$.\\
This identification works as follows for $K_0(A)$: 
\[
[p_+]-[p_-] \in K_0(A) \mapsto
\Big[ \hat{\H}_A, \phi_{p_+} \oplus \phi_{p_-}, \begin{pmatrix}
0 & \unit \\ \unit & 0
\end{pmatrix} \Big] \in KK(\C,A),
\]
as any element in $K_0(A)$ may be written as a formal difference $[p_+]-[p_-]$, with (not unique) $p_+,p_- \in A\otimes \K$, and $\phi_p: t\in\C \mapsto tp \in A\otimes \K$, for any projection $p\in A\otimes \K$. Owing to this identification, we may consider $[p]$ as an element of $KK(\C,A)$, for every $p\in A\otimes K$, \cf \cite[17.5-17.6]{Bla}\\
On the other hand, $K^0(A)$ is by definition the group generated by homotopy classes of even Fredholm modules on a standard $\Z_2$-graded separable Hilbert space $\hat{\H}$, \ie classes $(\hat{\H},\phi,F)$ with $\phi:A\ra B(\hat{\H})$ a graded *-homomorphism and $F$ a unitary selfadjoint operator on $B(\hat{\H})$ of degree 1 such that the graded commutator satisfies $[F,\phi(a)]_{(+)}\in K(\hat{\H})$, for all $a\in A$, \cf \cite[p.294]{Con94}. Thus $K^0(A)$ coincides with $KK(A,\C)$.
\item[$(3)$] Every *-homomorphism $\phi:A\ra B$ naturally defines a $KK(A,B)$-element $\{\phi\}$
as the homotopy class of $(B,\phi,0)$, where we have identified $B(B)$ with 
the multiplier algebra $\M(B)$ of $B$. $\{\phi\}$ depends only on the unitary-equivalence class in $\M(B)$, \cf \cite[17.1-17.2]{Bla}.
\item[$(4)$] There exists a well-defined bilinear map $\times$, the so-called Kasparov product 
\[
  KK(A,B)\times KK(B,C) \ra KK(A,C),
\]
which is associative. It is in general complicated to define and we refer to \cite[Ch.18]{Bla}, but for the following special cases we provide formulae.
\item[(5)] Suppose $A,B,C$ are trivially graded and given two classes of Kasparov modules
\[
\Big[\hat{\H}_B,\psi\oplus 0,\begin{pmatrix}
0 & \unit \\ \unit & 0
\end{pmatrix} \Big] \in KK(A,B), \quad
\Big[\hat{\H}_C,\phi\oplus 0,\begin{pmatrix}
0 & \unit \\ \unit & 0
\end{pmatrix} \Big] \in KK(B,C),
\]
whith $\psi$ a *-homomorphism from $A$ to $B\otimes\K$ and $\phi$ is a *-homomorphism from $B$ to $C\otimes\K$; the latter extends to $B\otimes \K$, denoted again by $\phi$. As explained in \cite[Ex.18.4.2(c)]{Bla}, the Kasparov product of the two is then given by the element
\[
\Big[\hat{\H}_B,\psi\oplus 0,\begin{pmatrix}
0 & \unit \\ \unit & 0
\end{pmatrix} \Big]\times
\Big[\hat{\H}_C,\phi\oplus 0,\begin{pmatrix}
0 & \unit \\ \unit & 0
\end{pmatrix} \Big] =
\Big[\hat{\H}_C,\phi\circ\psi\oplus 0,\begin{pmatrix}
0 & \unit \\ \unit & 0
\end{pmatrix} \Big]
\]
in $KK(A,C)$. 
\item[$(6)$] If $\psi : A \to B$ and $\phi : B \to C$ are *-homomorphisms then 
\begin{equation}
\{\psi\} \times \{\phi\} = \{\phi\circ\psi\} .
\end{equation}
If  $\;\id_A$  is the identity automorphism of $A$ then $\{\id_A\}$ is the neutral element in $KK(A,A)$ for the Kasparov product.
Hence $KK(A,A)$ is a unital ring, \cf \cite[18.7.1]{Bla}.
\end{itemize}
\medskip

The Kasparov product $KK(\C,A)\times KK(A,\C)\ra KK(\C,\C)\simeq\Z$ gives rise to an index pairing  between $K_0(A)=KK(\C,A)$ and 
$K^0(A)=KK(A,\C)$ and to a corresponding map $\gamma_a : K^0(A) \to \mathrm{Hom}\big(K_0(A),\Z \big)$.
 Moreover, the Kasparov product  $KK(\C,A)\times KK(A,A)\ra KK(\C,A) = K_0(A)$ gives rise to a map 
$\gamma_b: KK(A,A) \to \mathrm{End}\big({K_0(A)}\big)$. Similarly, the Kasparov product $KK(A,A)\times KK(A,\C)\ra KK(A,\C) = K^0(A)$ gives 
rise to a map $\gamma_c: KK(A,A) \to \mathrm{End}\big({K^0(A)}\big)$.

\section{Loop group representations, conformal nets and K-theory}\label{sec:DHR}

Let $G$ be a connected simply connected compact simple Lie group and denote its Lie algebra by $\lg$ and its dimension by $d\in\N$, and let $\Loop G := \Cci(S^1,G)$ the corresponding loop group. 
It is an infinite-dimensional Lie group, with Lie algebra the \emph{loop algebra} $\Loop \lg := \Cci(S^1,\lg)$. We denote by $\Loop\lg_\C$ the complexification of $\Loop \lg$. The Lie subalgebra $\tilde{\lg}_\C \subset \Loop \lg_\C $ consisting of elements with finite Fourier series admits a nontrivial central extension $\hat{\lg}_\C$ called the \emph{affine Kac-Moody algebra} associated with $\lg$, see \cite{Fuchs,Kac,PS}.
We shall denote the corresponding distinguished central element by $c_{\lg}$. 

We would like to consider positive-energy representations of $\Loop G$ from the point of view of noncommutative geometry. A strongly continuous projective unitary representation 
$\lambda:\Loop G\ra U(\H_\lambda)/\Uone $ on a Hilbert space $\H_\lambda$ is of \emph{positive energy} if
there is a strongly continuous one-parameter group $U^\lambda: \R \ra U(\H_\lambda)$ whose self-adjoint generator 
$L_0^\lambda$ (the {\it conformal Hamiltonian}) has non-negative spectrum and such that
\begin{equation}
\label{Eq:positive-energy}
U^\lambda(t)\lambda(g)U^\lambda(t)^* = \lambda(g_t), \quad g\in\Loop G,
\end{equation}
where $g_t$ is defined by $g_t(z) := g(\mathrm{e}^{-\mathrm{i}t}z)$, $z\in S^1$. Eq. (\ref{Eq:positive-energy}) should be understood 
in the projective sense so that if for every $g \in \Loop G$, $\hat{\lambda}(g) \in U(\H_\lambda)$ denotes a given choice of a representative 
of $\lambda(g) \in U(\H_\lambda)/\Uone$ then 
\begin{equation}
\label{Eq:positive-energy_b}
U^\lambda(t)\hat{\lambda}(g)U^\lambda(t)^* = \chi(g,t)\hat{\lambda}(g_t), \quad g\in\Loop G,
\end{equation}
with $\chi(g,t)\in \Uone $. Note that
\begin{equation}
\chi(g,2\pi)\unit_{\H_\lambda} = U^\lambda(2\pi)\hat{\lambda}(g)U^\lambda(2\pi)^*\hat{\lambda}(g)^* 
\end{equation}
so that $g \mapsto \chi(g,2\pi)$ is a continuous character of $\Loop G$ and hence  $\chi(g,2\pi)=1$ for all $g\in \Loop G$ because $\Loop G$ is a perfect group 
\cite[3.4.1]{PS}. Hence, if $\lambda$ is irreducible, then $U^\lambda(2\pi) = e^{2\pi i L_0^\lambda}$ is a multiple of the identity and hence 
$t \mapsto U^\lambda(t)$ factors to a representation of $\mathrm{Rot}(S^1)$, the group of rotations of $S^1$.

If $\pi$ is an irreducible unitary highest weight representation of $\hat{\lg}_\C$, cf. \cite{Kac}, then it exponentiates to an irreducible projective unitary positive-energy representation $\lambda_\pi$ of $\Loop G$ \cite[Thm.6.7]{GW}, \cite[Thm.6.1.2]{Tol}. 
Moreover, there is a canonical choice of the positive operator $L_0^{\lambda_\pi}$.
The value $\ell:=\pi(c_\lg)\in\N$ is called the \emph{level} of the representation $\lambda_\pi$. In this paper we shall always deal with direct sums of such representations $\lambda_\pi$ at fixed level.
It has been shown by A. Wassermann that these are exactly the irreducible strongly continuous projective unitary positive-energy representations $\lambda$ of $\Loop G$ such that the unitary  one-parameter group $U^\lambda$ implementing the rotations action on $\Loop G$ is diagonalizable with finite-dimensional eigenspaces, see \cite[Thm. 4.1]{Tol3}. 
Equivalently they are the irreducible smooth strongly continuous projective unitary positive-energy representations of $\Loop G$ \cite[Sec. 9.3]{PS}.
Any of these representations determines a central extension of $\Loop G$ by $\Uone$ \cite{Tol3} and the equivalence class of the corresponding circle bundle on $\Loop G$ only depends on the level \cite{PS}. Accordingly, the representations of fixed level $\ell$ correspond to the representations 
determining the associated central extension. 

The \emph{Verlinde fusion ring} $R^{\ell}(\Loop G)$ is the ring of formal differences (Grothendieck ring) associated with the semiring of equivalence classes of finite direct sums of such representations $\lambda_\pi$ at level $\ell$ \cite{Ver,FHT2011a,FHT2011b,FHT2013,Douglas}. The product is the so called \emph{fusion product}. For any $\ell$, there are only finitely many (say $N$) equivalence classes of irreducible unitary highest weight level $\ell$ representations of 
$\hat{\lg}_\C$. They are all described and classified in \cite{Kac}, and the corresponding loop group representations are discussed in \cite[Sect.9]{PS}, \cf also \cite[Sect.6]{Tol}.

Given now a fixed level $\ell$, let $(\lambda_i,\H_{\lambda_i})$, with $i=0,\ldots,N-1$, denote an arbitrary fixed maximal family of mutually inequivalent irreducible representations as above at level $\ell$, with $\lambda_0$ the \emph{vacuum representation}, \ie the representation corresponding to integral highest weight $0$. Then every element in $R^{\ell}(\Loop G)$ can be written as 
\begin{equation}
\sum_{i=0}^{N-1} m_i [\lambda_i],\quad m_i \in \Z .
\end{equation} 
The fusion product is expressed in terms of the \emph{Verlinde fusion rule coefficients} $\CN_{ij}^k\in\NN$ such that
\begin{equation}\label{eq:Verlinde}
[\lambda_i]\cdot[\lambda_j] = \sum_{k=0}^{N-1} \CN_{ij}^k [\lambda_k],
\end{equation}
for all $i,j=0,\ldots,N-1$. 

For every $i=0,\dots,N-1$, one can consider the conjugate representation 
$\lambda_{\bar{i}}$ of $\lambda_i$, which is uniquely determined by the condition
\begin{equation}
\label{EqFusion1}
\CN_{ij}^0= \delta_{\bar{i}, j},\quad j=0,\dots,N-1 .
\end{equation}
The fusion rule coefficients also satisfy the identities
\begin{equation}
\label{EqFusion2}
\CN_{ij}^k=\CN_{ji}^k
\end{equation}
and
\begin{equation}
\label{EqFusion3}
\CN_{ij}^{\bar{k}}=\CN_{jk}^{\bar{i}}=\CN_{\bar{k}\bar{j}}^{i} .
\end{equation}

\medskip

In order to work with loop groups, we will need the setting of conformal nets, for which we provide some basics here.
Conformal nets describe chiral conformal CFTs in the operator algebraic approach to quantum field theory \cite{Haag}. They can be considered as a functional analytic analogue of vertex algebras \cite{FLM,Kac1998}, see \cite{CKLW}.

Let $\I$ denote the set of nondense nonempty open intervals in $S^1$ and $\Diff$ the infinite-dimensional Lie group of orientation-preserving smooth diffeomorphisms of $S^1$ \cite{Milnor}. $\Diff$ contains the group of M\"{o}bius transformations of $S^1$ which is isomorphic to $\PSL$. Accordingly we will consider $\PSL$ as a (Lie) subgroup of $\Diff$.

For $I\in\I$ we denote by $I'$ the interior of the complement of $I$, which lies again in $\I$. A \emph{local conformal net} $\A$ over $S^1$ (\cf \cite{FG,GL2}) consists of a family of von Neumann algebras $(\A(I))_{I\in\I}$
acting on a common separable Hilbert space $\H$ together with a given strongly 
continuous unitary representation $U$ of $\PSL$ on $\H$ satisfying 
\begin{itemize}
\item {\it isotony}: $\A(I_1)\subset \A(I_2)$ if $I_1\subset I_2$, for $I_1,I_2\in\I$;
\item {\it locality}: elements of $\A(I_1)$ commute with those of 
$\A(I_2)$ whenever $I_1\cap I_2 = \emptyset$;
\item {\it covariance}: $U(\gamma)\A(I)U(\gamma)^*=\A(\gamma I)$ for all $\gamma\in \PSL$
and $I\in \I$;
\item {\it positivity of the energy}:
the conformal Hamiltonian $L_0$, defined by 
the equation $U(R_\alpha)=\rme^{\rmi\alpha L_0}$ ($\forall \alpha \in \R$), 
is positive, where $(R_\alpha)_{\alpha\in\R}\subset\PSL$ stands for the rotation subgroup;
\item {\it existence, uniqueness and cyclicity of the vacuum}:
up to phase there exists a unique unit vector
$\Omega \in \H$ called the ``vacuum vector''
which is invariant under the action of $U$; moreover, 
it is cyclic for the von Neumann
algebra $\bigvee_{I\in\I}\A(I)$;
\item {\it diffeomorphism covariance}: $U$ extends (uniquely) to a 
strongly continuous projective unitary representation of $\Diff$ denoted again by $U$ and satisfying
\begin{gather*}
U(\gamma)\A(I)U(\gamma)^* = \A( \gamma I),\\
\gamma \restriction_I={\rm id}_I \Rightarrow \Ad (U(\gamma ))\restriction_{\A(I)}=\rm{id}_{\A(I)},
\end{gather*}
for all $\gamma \in \Diff$ and $I \in \I$.
\end{itemize}
There are many known important consequences of the above definition, \cf \cite{GL2} and references therein for a collection with proofs. We shall need the following three:
$\A(I)'= \A(I')$, for every $I \in \I$ ({\it Haag duality}), and $\bigvee_{I\in\I}\A(I) = B (\H)$ ({\it irreducibility}), $\A(I)$ is a factor, for every $I \in \I$ ({\it factoriality}). Note also that the separability of the vacuum Hilbert space $\H$ is now known to be a consequence of the other assumptions thanks to the recent results in \cite{MTW}.    

A conformal net is said to have the {\it trace class condition} if 
${\rm Tr}(q^{L_0})< +\infty$ for all $q \in (0,1)$.

A \emph{representation} of $\A$ is a 
family $\pi=(\pi_I)_{I\in\I}$ of (unital) *-representations $\pi_I$ of $\A(I)$ on a 
common Hilbert space $\H_\pi$ such that $\pi_{I_2}\restriction_{\A(I_1)}=\pi_{I_1}$ 
whenever $I_1\subset I_2$. The representation $\pi$ is called 
{\it locally normal} if $\pi_I$ is normal for every $I\in \I$; this is always the case if $\H_\pi$ is separable. Conversely, if $\pi$ is a cyclic locally normal representation of $\A$ then $\H_\pi$ is separable.

A locally normal representation $\pi$ of the conformal net $\A$ is always (M\"{o}bius) covariant with positive energy in the sense that there exists a unique 
strongly continuous unitary representation $U_\pi$ of the universal covering $\PSI$ such that
\begin{equation}
U_\pi(\gamma)\pi_I(a)U_\pi(\gamma)^* = \pi_{\dot{\gamma}I}(U(\dot{\gamma})aU(\dot{\gamma})^*), \quad \gamma \in \PSI \; a \in \A(I),
\end{equation}
where $\dot{\gamma}$ is the image of $\gamma$ in $\PSL$ under the covering map,  and such that 
$U_\pi(\gamma)\in \bigvee_{I\in\I}\pi_I(\A(I))$, for all $\gamma\in\PSI$, \cf \cite{DFK}. Moreover, the infinitesimal generator $L_0^\pi$ of the lifting of the rotation subgroup turns out to be positive \cite{Wei06}.

The \emph{vacuum representation}  $\pi_0$ on the vacuum Hilbert space $\H_{\pi_0}  := \H$ is defined by 
$\pi_{0 , I}(x) = x$, for all $I\in \I$ and all $x\in \A(I)$, and it is obviously locally normal. The unitary equivalence class of a locally normal representation $\pi$ up to unitary equivalence is called a {\it (DHR) sector} and denoted by $[\pi]$. If $\pi$ is irreducible then $[\pi$] is said to be an irreducible sector.

A covariant representation $\pi$ on the vacuum Hilbert space $\H$ is said to be {\it localized}  in $I_0$ if $\pi_{I'} = \id$, for all $I\supset \bar{I}_0$.  
In this case, we have $\pi_I(\A(I)) \subset \A(I)$, for all $I\in \I$ containing $I_0$,   i.e. $\pi_I$ is an endomorphism of $\A(I)$. 
If $\pi$ is any representation of $\A$ on a separable Hilbert space $\H_\pi$ and $I_0$ is any interval in $\I$ then there exists a covariant representation localized in $I_0$ and unitarily equivalent to $\pi$. 

The \emph{universal C*-algebra} \cite[Sect.5.3]{FRS} of $\A$ is the unique (up to isomorphism) unital C*-algebra $C^*(\A)$ such that
\begin{itemize}
\item[-] for every $I\in\I$, there are unital embeddings $\iota_I:\A(I)\ra C^*(\A)$, such that $\iota_{I_2}\restriction_{\A(I_1)} = \iota_{I_1}$ if $I_1\subset I_2 $, and all $\iota_I(\A(I))\subset C^*(\A)$ together generate $C^*(\A)$ as C*-algebra;
\item[-] for every representation $\pi$ of $\A$ on $\H_\pi$, there is a unique representation $\check{\pi}:C^*(\A)\ra B(\H_\pi)$ such that
\[
\pi_I = \check{\pi}\circ \iota_I,\quad I\in \I, 
\] 
\end{itemize}
\cf also \cite{CCHW,Fre90,GL1}. If $\pi$ is locally normal then we say that $\check{\pi}$ is locally normal. If $\pi$ is localized in $I_0$ then we say that 
$\check{\pi}$ is localized in $I_0$.
In the following, we shall denote $\check{\pi}$ simply by $\pi$ since it will be clear from the context whether $\pi$ is a representation of $\A$ or the corresponding representation of $C^*(\A)$. 

There is a natural correspondence between localized representations and covariant localized endomorphisms of $C^*(\A)$. 
Namely, for every locally normal representation $\pi$ of $C^*(\A)$ localized in $I_0$, there is an endomorphism $\rho$ of $C^*(\A)$
which is, in an appropriate sense, covariant and localized in $I_0$, such that $\pi = \pi_0\circ \rho$ \cite{FRS,GL1}, \cf also \cite[Sect.2]{CCHW}. 

If $\pi$ is a representation of $\A$ on a separable Hilbert space then, for any $I\in \I$,  $\pi_{I'}(\A(I'))'$ and $\pi_I(\A(I))$ are factors and, as a consequence of locality,  $\pi_I(\A(I)) \subset \pi_{I'}(\A(I'))'$. The square of the index $[\pi_{I'}(\A(I'))':\pi_I(\A(I))] \in [1,+\infty]$ of the subfactor 
$\pi_I(\A(I)) \subset \pi_{I'}(\A(I'))'$ does not depend on $I$. Its square root is called the \emph{statistical dimension} of $\pi$, it is denoted by $d(\pi)$ and depends only on the sector $[\pi]$.

If $\rho_1$ and $\rho_2$ are covariant localized endomorphisms of $C^*(\A)$ localized in the same interval 
$I_0 \in \I$, then the composition $\rho_1 \rho_2$ is again a covariant endomorphism localized in $I_0$. The equivalence class $[\pi_0 \circ \rho_1 \rho_2]$ depends only on $[\pi_0 \circ \rho_1 ]$ and $[\pi_0 \circ \rho_2]$.  As a consequence, with the operations  
\[
[\pi_0 \circ \rho_1 ] [\pi_0 \circ \rho_2]  := [\pi_0 \circ \rho_1 \rho_2] ,\quad 
 [\pi_0 \circ \rho_1] + [\pi_0 \circ \rho_2]  := [\pi_0 \circ \rho_1 \oplus \pi_0 \circ \rho_2]
 \]
 the set of equivalence classes of locally normal representations of $\A$ with finite statistical dimension becomes a commutative unital semiring (without $0$)  $\Ring$ called the \emph{DHR fusion semiring} \cite{Reh}. 
 There is a conjugation $[\pi] \mapsto \overline{[\pi]}$ in  $\Ring$ determined by $\overline{[\pi_0 \circ \rho]} = \pi_0 \circ \bar{\rho}$ where $\bar{\rho}$ is the covariant localized endomorphism conjugate to $\rho$, see \cite[Subsec.2.3.]{GL2}. $\bar{\rho}$ is defined up to unitary equivalence in $C^*(\A)$.  If $\pi_0 \circ \rho$ is irreducible then $\bar{\rho}$ is determined up to equivalence by the irreducibility of  $\pi_0 \circ \bar{\rho}$ and the fact that $\pi_0$ is equivalent to a subrepresentation of  $\pi_0 \circ \rho \bar{\rho}$.

 As in \cite{CCHW} we denote by $\RRing$ the corresponding ring of formal differences (Grothendieck ring) and call it the  \emph{DHR fusion ring} of $\A$ with the corresponding \emph{DHR fusion rules}.
 
An important class of nets is the class of {\it completely rational} nets introduced in \cite{KLM}. Complete rationality appears to be the right notion for 
rational chiral CFTs in the operator algebraic setting of conformal nets. By \cite{LX} and \cite{MTW} a net $\A$ is completely rational if and only if it has only finitely many irreducible sectors, all with finite statistical dimension. If $\A$ is completely rational then $\RRing$ is finitely generated.
We shall henceforth restrict ourselves to the case where $\A$ is completely rational.

In \cite[Sect.3]{CCHW} a locally normal universal C*-algebra is constructed for general $\A$ which is more manageable than $C^*(\A)$ and which can be used as a substitute for it if only locally normal representations are considered, as typically the case. 
As $\A$ is assumed here to be completely rational with $N<\infty$ irreducible sectors, we may however work with a simpler C*-algebra isomorphic to the locally normal C*-algebra and also defined in \cite{CCHW}. It is the \emph{reduced locally normal universal C*-algebra} 
$C^*_{\red}(\A)$, defined as follows. For each sector of $\A$ consider a fixed representative representation of $C^*(\A)$, resulting in a family of mutually inequivalent irreducible locally normal representations $\{\pi_0,...\pi_{N-1}\}$ of $C^*(\A)$, where $\pi_0$ denotes the vacuum representation. Moreover, for any $i=0,1,\dots,N-1$ let $\rho_i$ be a 
localized covariant endomorphism of $C^*(\A)$ such that $[\pi_i]=[\pi_0\circ\rho_i]$. Let $(\pi_{\red},\H_{\red})$ denote the direct sum $\bigoplus_{i=0}^{N-1}\pi_i$ on the Hilbert space $\H_{\red} := \bigoplus_{i=0}^{N-1}\H_{\pi_i}$
and $C^*_{\red}(\A):=\pi_{\red}(C^*(\A))$, which turns out to be isomorphic to 
$\bigoplus_{i=0}^{N-1}B(\H_{\pi_i}) \simeq  B(\H)^{\oplus N}$ \cite[Thm.3.3]{CCHW}. Then every locally normal representation $\pi$ of $C^*(\A)$ is quasi-equivalent to a subrepresentation of $\pi_{\red}$ and hence gives rise to a unique normal representation $\pi'$ of $C^*_{\red}(\A)$ such that $\pi'\circ\pi_{\red} = \pi$. Moreover, every normal representation of $C^*_{\red}(\A)$ arises in this way and the map $\pi \mapsto \pi'$ gives rise to an isomorphism from the category of locally normal representations of $\A$ onto the category of normal representations on $C^*_{\red}(\A)$. 
As  a  consequence, for every covariant localized endomorphism $\rho$ of $C^*(\A)$ there exists a unique endomorphism 
$\hat{\rho}$ of $C^*_{\red}(\A)$ such that  $\hat{\rho}(\pi_{\red}(x)) = \pi_{\red} (\rho(x))$ for all $x \in C^*(\A)$. 
In the following we shall often denote $\pi'$ simply by $\pi$ whenever it will be clear from the context whether $\pi$ is a representation of $\A$, the corresponding representation of $C^*(\A)$ or the corresponding representation of $C^*_{\red}(\A)$.

Another universal C*-algebra describing the locally normal representation theory of the completely rational net $\A$ is the 
\emph{compact universal C*-algebra} 
\[
\kA:= C^*_{\red}(\A)\cap K(\H_{\red}) = \bigoplus_{i=0}^{N-1}K(\H_{\pi_i}),
\]
which has been also introduced in \cite{CCHW} and will play a central role in the following. Here, as usual, $K(\H)$ denotes the C*-algebra of compact operators on the Hilbert space $\H$.   The algebra $\kA$ can be considered as a universal C*-algebra for the locally normal representation theory of  the net $\A$ because of the following  property: the restriction of unital normal representations of 
$C^*_{\red}(\A)$ to the C*-subalgebra $\kA$ gives rise to an isomorphism from the category of locally normal representations of $\A$ onto the category of nondegenerate representations of $\kA$, see \cite[Prop.3.4]{CCHW}. $\kA$ is probably the simplest C*-subalgebra of 
$C^*_{\red}(\A)$ with the above property and for $N=1$ it is the only one as a consequence of  \cite[Thm. 4]{Rosenberg1953}. On the other hand, for $N>1$ there are other C*-subalgebras with this property \cite{BL1974}. However, as shown again in \cite[Prop.3.4]{CCHW} 
$\kA$ has another property which plays a crucial role in \cite{CCH, CCHW} and will play an important role in the following namely, if $\rho$ is a covariant localized endomorphism of $C^*(\A)$ and $\pi_0\circ \rho$ has finite statistical dimension then $\hat{\rho}(\kA) \subset \kA$. 

Actually, as shown by the following proposition, these two properties completely determine $\kA$ among the C*-subalgebras of $C^*_{\red}(\A)$ so that the choice of $\kA$ is canonical.  

\begin{proposition}\label{prop.canonicalkA} Let $A$ be a C*-subalgebra of  $C^*_{\red}(\A)$ such that the restriction of unital normal representations of $C^*_{\red}(\A)$ to the C*-subalgebra $A$ gives rise to an isomorphism from the category of unital normal representations of  
$C^*_{\red}(\A)$ onto the category of nondegenerate representations of $A$. Moreover, assume that $\hat{\rho}(A) \subset A$ for every localized covariant endomorphism $\rho$ of $C^*(\A)$ such that  $\pi_0\circ \rho$ has finite statistical dimension. Then $A=\kA$. 
\end{proposition}
\begin{proof} Let $\pi$ be a representation of $A$ on a Hilbert space $\H_\pi$. Then $\pi$ is a direct sum $\pi_a\oplus \pi_b$ on 
$\H_a \oplus \H_b$ where $\pi_a$ is a nondegenerate representation on $\H_a$ and $\pi_b$ is the zero representation on $\H_b$. By assumption $\pi_a(A)'$ coincide with the  commutant of a unital normal representation  of $C^*_{\red}(\A)$ on $\H_a$  which is a type I von Neumann algebra. 
Accordingly $\pi_a(A)''$ is a type I von Neumann algebra and hence $\pi(A)''= \pi_a(A)''\oplus \C \unit_b$ is a type I von Neumann algebra. 
Since $\pi$ was arbitrary it follows that $A$ is a type I C*-algebra namely $\pi(A)''$ is a type I von Neumann algebra for all representations $\pi$ of $A$, see \cite[Sec.5.5]{Dix82}, \cite[Sec.4.6]{Sakai1971}. It follows from \cite[Thm.4.6.4]{Sakai1971} and 
\cite[Cor.4.1.10 ]{Dix82} that $\pi_i(A) \supset K(\H_{\pi_i})$, $i=0,\dots,N-1$.  Now, let $I_i := \ker(\pi_i\restriction_{A})$,  $i=0,\dots,N-1$
be the primitive ideals of $A$. Given $i \in \{0,\dots,N-1\}$ there must exist $j \in \{0,\dots,N-1\}$ such that $I_i \subset I_j$  and $I_j$ is a maximal primitive ideal of $A$ and hence a maximal proper closed two-sided ideal of $A$. Since $J_j:=\{x \in A: \pi_j(x) \in K(\H_{\pi_j})  \}$ is a closed two-sided ideal of $A$ properly containing $I_j$ it follows that $J_j=A$ and hence that $\pi_j(A) = K(\H_{\pi_j})$. Let $\rho_{\bar{j}}$ be a conjugate endomorphism for $\rho_j$ so that 
$\pi_j \circ \hat{\rho}_{\bar{j}} \hat{\rho}_i \simeq \pi_0 \circ\hat{\rho}_j\hat{\rho}_{\bar{j}} \hat{\rho}_i$ contains a subrepresentation equivalent to $\pi_i$. Since  
$\pi_j \circ \hat{\rho}_{\bar{j}} \hat{\rho}_i(A)\subset \pi_j(A) = K(\H_{\pi_j})$ it follows that $\pi_i (A) = K(\H_{\pi_i})$. Thus, since $i \in \{0,\dots,N-1\}$ was arbitrary, $A\subset \kA$. 

By assumption $A' = C^*_{\red}(\A)'$ so that  $A'' = C^*_{\red}(\A)$ and hence $A$ is weakly dense in 
$C^*_{\red}(\A)$ because $A$ is nondegenerate on $\H_{\red}$.  
For any $i=0,\dots, N-1$ let $E_i$ be the orthogonal projection of $\H_{\red}$ onto $\H_{\pi_i}$. Then, $E_iAE_i = \pi_i(A) = K(\H_{\pi_i})$,
$i=0,\dots,N-1$. For every $i \in \{0,\dots,N-1\}$ we can find a bounded sequence $x^i_n \in A$ strongly convergent to $E_i$. 
Recalling that $A\subset \kA$ we see that $x^i_n x$ converges in norm to $E_ix$ for all $x\in A$ and all $i \in \{0,\dots,N-1\}$. 
Thus $E_iAE_i = E_i\kA E_i \subset A$ and consequently  $\kA \subset A$ .
\end{proof}

For any $i=0,\dots,N-1$, we choose a lowest energy unit vector $\Omega_i$ in $\H_{\pi_i}$ and denote by $q_{\Omega_i}$ the corresponding one-dimensional projection in $K(\H_{\pi_i})$ and let $p_i$ be the unique minimal projection in $\kA$ such that $\pi_i(p_i)=q_{\Omega_i}$. 
By \cite{CCH,CCHW}, the maps $[\pi_i] \mapsto [p_i] \in K_0(\kA)$ and 
$[\pi_i] \mapsto \{\hat{\rho}_i \restriction_{\kA} \} \in KK(\kA,\kA)$ give rise to a 
surjective group isomorphism 
$$\phi^\A_{-1} : {\RRing} \to K_0 (\kA)$$ 
and to an injective ring homomorphism
\begin{equation}\label{eq:phi0A}
\phi^\A_{0} : {\RRing} \to KK(\kA,\kA)
\end{equation}
in a natural way.
It is shown in (4) in the proof of \cite[Thm.4.4]{CCHW} that $[p_i]= [ \hat{\bar{\rho}}_i ( p_0)]$, for all $i=0,\ldots N-1$.
It then follows from \cite[Thm.3.1]{CCH}, that $\phi^\A_{-1}$ and $\phi^\A_{0}$ are related through the Kasparov product by 
\begin{equation}\label{eq:phi-1A1}
\phi^\A_{-1}([\pi_0\circ\rho_i]) = [p_i] = [\hat{\bar{\rho}}_i(p_0)] = (\hat{\bar{\rho}}_i)_*([p_0]) = [p_0] \times \phi_0^\A([\pi_0\circ\bar{\rho}_i]),
\end{equation}
hence
\begin{equation}\label{eq:phi-1A2}
\phi^\A_{-1}(x) = [p_0] \times \phi_0^\A(\bar{x})  , \quad x\in \RRing.
\end{equation}

\medskip

Let us now apply the general theory of conformal nets to loop groups. Let $G$ be a connected simply-connected compact simple Lie group, with the same notation as introduced at the beginning of this section. The \emph{loop group net of $G$ at level $\ell$} on $\H:=\H_{\lambda_0}$ is defined as
\begin{equation}
\label{Eq:A_{G_{ell}}net}
\A_{G_{\ell}}(I):= \{ \lambda_0(g) : g \in \Loop_I G \}'', \quad I\in\I,
\end{equation}
where the local subgroups $\Loop_I G \subset \Loop G$, $I\in \I$, are defined by 
\begin{equation}
\label{Eq:localsubgroups}
\Loop_I G := \{g \in \Loop G:  g\restriction_{I'} = 1 \},
\end{equation}
\cf \cite{FG}. It is a conformal net. 
The locality property of the net can be proved in various ways. In particular, it follows from the following lemma that we will also use later, \cf \cite[Prop.1.1.2]{Tol2}. The fact that $G$ is simply connected is crucial here. 

\begin{lemma}
\label{Lemma:localcommutation}
 Let $\lambda: \Loop G \to U(\H_\lambda)/\Uone$ be a strongly continuous projective unitary representation of 
$\Loop G$, let $g \mapsto \hat{\lambda}(g) \in U(\H_\lambda)$ be any given choice of the representatives of $\lambda(g)$ and let $I\in\I$. 
Then every unitary operator $u_I$ satisfying $u_I\lambda(g)u_I^*=\lambda(g)$, for all $g \in \Loop_I G$, commutes with $\hat{\lambda}(g)$, 
for all $g\in \Loop_I G$.
\end{lemma}
\begin{proof}
The projective equality $u_I\lambda(g)u_I^*=\lambda(g)$, $g \in \Loop_I G$ implies that the map 
$\Loop_I G  \ni g \mapsto \chi(g) \in \Uone$ defined by 
\begin{equation}
\chi(g)\unit_{\H_\lambda} := u_I\hat{\lambda}(g)u_I^*\hat{\lambda}(g)^*,\quad g\in \Loop_I G,
\end{equation}
is a continuous character of $\Loop_I G$. But $\Loop_I G$ is a perfect group by \cite[Lemma 1.1.1]{Tol2} and hence $\chi(g)=1$ for all 
$g\in \Loop_I G$.
\end{proof}

Let $\lambda_i$, $i=0,\dots N-1$, be the irreducible level $\ell$ positive-energy representations of $\Loop G$ defined at the beginning of this section.
These representations are mutually locally unitarily equivalent, \cf \cite[IV.6]{FG}, \cite[IV.2.4.1]{Tol2} and \cite[p.12]{Was2}. Accordingly, 
for any $\lambda_i$ and any $I\in\I$ there is a unitary $u_I:\H_{\lambda_0}\ra \H_{\lambda_i}$ such that $\lambda_i(g)= u_I \lambda_0(g) u_I^*$ 
for all $g \in \Loop_I G$.  Then, we can define an irreducible representation $\pi_{\lambda_i}$ of $\A_{G_{\ell}}$ by 
\begin{equation}\label{eq:pilambdadef}
\pi_{\lambda_i,I}(x) := u_I x u_I^*, \quad x\in \A_{G_{\ell}}(I), I\in\I .
\end{equation}
Note that it follows from Lemma \ref{Lemma:localcommutation} that, for any $I\in \I$, $\pi_{\lambda_i,I}$ does not depend on the 
choice of $u_I$ so that  $\pi_{\lambda_i, I_2}\restriction_{\A_{G_{\ell}}(I_1)}=\pi_{\lambda_i, I_1}$ 
whenever $I_1\subset I_2$.

\emph{Henceforth, we make the following standing assumption on $\A_{G_{\ell}}$}:
\begin{assumption}\label{CFT-assumption}
$\A_{G_{\ell}}$ is completely rational and there exists a (necessarily unique) 
ring isomorphism $\psi_{G_{\ell}}$ of $R^{\ell}(\Loop G)$ onto $\tilde{\mathcal{R}}_{\A_{G_{\ell}}}$ such that 
$\psi_{G_{\ell}}([\lambda_i]) = [\pi_{\lambda_i}]$, for all $i=0,\dots,N-1$.
\end{assumption}

This assumption might seem a strong restriction, but it is expected to be true in general and in all explicitly computed cases it has been proven \cite[Sect.3.2]{KL05}, see also \cite[Problem 3.32]{Kaw2015}. E.g. for $G=\operatorname{SU}(n)$ at any level $\ell$ the assumption follows from the results in \cite{KLM,Was,Xu}. Moreover for any loop group it is known that from a representation of the loop group conformal net one obtains a representation of the loop group. More precisely, every locally normal representation of $\A_{G_{\ell}}$ decomposes into a direct sum of irreducibles, and there is an injective map from the irreducibles in $\tilde{\mathcal{R}}_{\A_{G_{\ell}}}$ to the irreducibles in $R^{\ell}(\Loop G)$,  cf.~\cite{CW,Henriques2017}.

Using Assumption \ref{CFT-assumption} and setting $\pi_i=\pi_{\lambda_i}$, we can now define 
\begin{equation}\label{eq:phi0}
\phi^{G_\ell}_0 := \phi_0^{\A_{G_\ell}}\circ \psi_{G_{\ell}} : R^\ell(\Loop G) \ra KK(\mathfrak{K}_{\A_{G_\ell}},\mathfrak{K}_{\A_{G_\ell}}).
\end{equation}
It then follows from \eqref{eq:Verlinde}, \eqref{eq:phi-1A1} and \cite[Thm.3.1]{CCH} that

\begin{equation}\label{eq:KK-action}
\begin{aligned}
\ [p_j] \times \phi^{G_\ell}_0 ([\lambda_i]) =& [p_0] \times \phi^{G_\ell}_0([\lambda_{\bar{j}}]) \times \phi^{G_\ell}_0([\lambda_{i}])
= [p_0] \times \phi^{G_\ell}_0([\lambda_{\bar{j}}] \cdot [\lambda_{i}])\\
=& \sum_{k=1}^{N-1} \CN_{i,\bar{j}}^{\bar{k}} [p_0] \times \phi^{G_\ell}_0([\lambda_{\bar{k}}])
= \sum_{k=1}^{N-1} \CN_{i,\bar{j}}^{\bar{k}} [p_k],
\end{aligned}
\end{equation}
for all $i,j=1,\ldots , N-1$.

\begin{remark}\label{rem:nonsimple}
If $G$ is a connected simply connected compact Lie group which is not necessarily simple then 
$G$ is the direct product of connected  simply connected compact simple Lie groups and the results in this section and in the following Section 
\ref{sec:JLO} generalize in a straightforward way. 
In this case the level $\ell=({\ell}_1,{\ell}_2, \dots , {\ell}_n)$ consists of a level $\ell_i$ (a positive integer) for each simple factor $G_i$. 
Then the net $\A_{G_{\ell}}$ is the tensor product $\A_{G_1,{\ell}_1}\otimes\A_{G_2,{\ell}_2} \dots \otimes \A_{G_n, {\ell}_n}$ where 
$\A_{G_i , {{\ell}_i}}$, $i=1,2,\dots ,n$ denotes the net defined from the vacuum representation of the loop group $\Loop G_i$ at level $\ell_i$. If $\A_{G_i, {\ell}_i}$ satisfies Assumption  \ref{CFT-assumption} for $i=1,\dots,n$, then also $\A_{G_{\ell}}$ does. 
\end{remark}

\section{An index paring and KK-theory for loop group representations}\label{sec:JLO}

We recall from Section \ref{sec:DHR} that for our fixed level $\ell$, we write $(\lambda_i,\H_{\lambda_i})$, with $i=0,\ldots,N-1$,  for an arbitrary fixed maximal family of mutually inequivalent irreducible representations at level $\ell$ as introduced there, with $\lambda_0$ the vacuum representation. Motivated by the theory and natural structure of conformal loop group nets and Assumption \ref{CFT-assumption}, which we assume throughout this paper, we would like to work now completely at the level of loop groups. Let 
$(\lambda_{\red},\H_{\lambda_{\red}})$ denote the direct sum of all $(\lambda_i,\H_{\lambda_i})$. By construction, every representation in consideration is unitarily equivalent to a subrepresentation of a suitable multiple of $\lambda_{\red}$. 
Recall from Sec. \ref{sec:DHR} that for each $\lambda_i$ there is a corresponding locally normal irreducible representation 
$\pi_{\lambda_i}$ of the net $\A_{G_\ell}$ and that accordingly we can naturally identify $\H_{\lambda_i}$ with $\H_{\pi_{\lambda_i}}$ and 
$\H_{\lambda_{\red}}$ with $\H_{\red}$. We call $\lambda_{\red}$ the \emph{reduced universal representation} or simply \emph{reduced representation}. 

Now, Assumption \ref{CFT-assumption}, and the theory  of conformal nets naturally give two universal C*-algebras associated with the level $\ell$ representations of $\Loop G$ :
the \emph{reduced universal algebra} 
\[
\BG:= C^*_{\red}(\A_{G_\ell}) =  \lambda_{\red}(\Loop G)'' \simeq \bigoplus_{i=0}^{N-1} B(\H_{\lambda_i})
\]
and the \emph{compact universal algebra} 
\[
\kG:= \mathfrak{K}_{\A_{G_\ell}} = \BG \cap K(\H_{\lambda_{\red}}) .
\] 

Then, according to our previous notation, we can use the symbol $\pi_{\lambda_i}$ for the (unique normal) representation of $\BG$ such that $\lambda_i= \pi_{\lambda_i}\circ \lambda_{\red}$, and continue to use the same symbol for its restriction to $\kG$.

Given the Hilbert space $\K:=L^2(S^1,\C^{d})$ with complex conjugation operator $\gamma$, the corresponding \emph{self-dual CAR algebra} $\CAR(\K,\gamma)$ (\cite{Ara}) is the unital graded C*-algebra generated by odd $F(f)$, for $f\in\K$, such that $[F(\bar{f}),F(g)]_+=\langle f,g\rangle \unit$ and $F(f)^*=F(\gamma f) =F(\bar{f})$, in other words the C*-algebra generated by $d$ chiral free real fermionic fields. It has so-called Ramond and Neveu-Schwarz representations. Let $(\pR,\H_\pR)$ be the minimal graded Ramond representation,  \ie the unique irreducible Ramond representation if $d$ is even or the direct sum of the two inequivalent irreducible Ramond representations if $d$ is odd, and denote its grading (a selfadjoint unitary)  by $\Gamma_{\pR}$. We will write $F^{a,\pR}_n$ for $\pR(F(f))$ where $f\in\K$ is the function defined by $f(z)=x_a z^n$ and $x_a$ denotes the $a$-th canonical basis vector in $\C^d$, and $a=1,\ldots,d$, $n\in\Z$. The rotation group acts naturally on $\CAR(\K,\gamma)$, and its infinitesimal generator (also called conformal Hamiltonian) $L_0^\pR$ in the representation $\pR$ has positive discrete spectrum. The smallest eigenvalue of $L_0^\pR$ is given by $h_R:=d/16$. We write $\H_{\pR,0,+}$ for the even part (with respect to $\Gamma_{\pR}$) of the corresponding eigenspace. Let henceforth $e_R$ be the projection onto an arbitrary but fixed one-dimensional subspace of $\H_{\pR,0,+}$. For a more expanded summary about the CAR algebra and Ramond representations with the present notation, we refer to \cite[Sect.6]{CHL} and for details and proofs to \cite{Ara,Boc} together with \cite[Sect.5.3]{GBVF}.

For any $i\in \{0,\dots, N-1\}$ we fix a lowest energy unit vector $\Omega_{\lambda_i}\in\H_{\lambda_i}$ and denote by
$q_{\Omega_{\lambda_i}}$ the orthogonal projection onto $\C \Omega_{\lambda_i}$. Now, for every representation $(\lambda_i,\H_{\lambda_i})$ of $\Loop G$, we define the degenerate representation
$\hat{\pi}_{\lambda_i}: \BG \ra B(\hat{\H}_{\lambda_i})$ on the Hilbert space 
$\hat{\H}_{\lambda_i}:= \H_{\lambda_i}  \otimes \H_{\pR}$ by 
\begin{equation}\label{eq:hatpi}
\hat{\pi}_{\lambda_i}(x) := \pi_{\lambda_i}(x)  \otimes e_R , \quad x \in \BG .
\end{equation}
This way, $\hat{\H}_{\lambda_i}$ is graded by a grading operator $\hat{\Gamma}_{\lambda_i}:= \unit_{\lambda_i}\otimes\Gamma_\pR$ and $\hat{\pi}_{\lambda_i}$ is even, \ie commutes with $\Ad \hat{\Gamma}_{\lambda_i}$. 
On $\hat{\H}_{\lambda_i}$ we can define the total conformal Hamiltonian 
\begin{equation}\label{hatL}
\hat{L}^{\lambda_i}_0 := L^{\lambda_i}_0\otimes \unit_{\H_{\pR}} + 
\unit_{\H_{\lambda_i}}  \otimes L^{\pR}_0. 
\end{equation}
It satisfies 
\begin{equation}
\mathrm{e}^{\mathrm{i}\hat{L}^{\lambda_i}_0} \left( \pi_{\lambda_i}(x) \otimes e_R \right) 
\mathrm{e}^{-\mathrm{i}\hat{L}^{\lambda_i}_0} 
= 
\left(\mathrm{e}^{\mathrm{i}L^{\lambda_i}_0} \pi_{\lambda_i}(x) \mathrm{e}^{-\mathrm{i}L^{\lambda_i}_0}\right)\otimes  e_R
\end{equation}
so that it generates the rotation action in the representation $\hat{\pi}_{\lambda_i}$.

The following proposition shows that $\hat{\pi}_{\lambda_i}$ can be considered as a ``supersymmetric representation", cf. \cite{CHL,CKL}.

\begin{proposition}\label{prop:Sugawara}
For $i=0,\ldots,N-1$, there is an odd selfadjoint operator $D_{\lambda_i}$ on $\hat{\H}_{\lambda_i}$ such that 
$D_{\lambda_i}^2 = \hat{L}^{\lambda_i}_0 - \frac{c}{24} \unit_{\hat{\H}_{\lambda_i}}$ with 
$c= \frac{d}{2} + \frac{d{\ell}}{{\ell}+h^\vee}$, where $h^\vee$ is the dual Coxeter number of $\lg_\C$.  Furthermore, the spectrum of $D_{\lambda_i}$
does not contain $0$.
\end{proposition}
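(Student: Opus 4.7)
My plan is to construct $D_{\lambda_i}$ by applying the Kac-Todorov super-Sugawara construction \cite{KT} twice---once for each loop-group factor---after splitting the $2d$ fermions into two independent $d$-fermion systems.

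Since $\K=L^2(S^1,\C^{2d})=\K_1\oplus\K_2$ with $\K_j\cong L^2(S^1,\C^d)$ each stable under $\gamma$, the self-dual CAR algebra factors as a $\Z/2$-graded tensor product $\CAR(\K,\gamma)\cong\CAR(\K_1,\gamma_1)\,\widehat\otimes\,\CAR(\K_2,\gamma_2)$, and its Ramond representation splits accordingly: $(\pR,\H_\pR)\cong(\pR^{(1)},\H_{\pR,1})\,\widehat\otimes\,(\pR^{(2)},\H_{\pR,2})$, with grading $\Gamma_{\pR}=\Gamma_{\pR,1}\otimes\Gamma_{\pR,2}$ and conformal Hamiltonian $L_0^{\pR}=L_0^{\pR,1}\otimes\unit+\unit\otimes L_0^{\pR,2}$; in particular the two Ramond ground-state energies $d/16$ add up to $d/8=h_R$, matching the paper's normalization.

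On $\H_{\lambda_i}\otimes\H_{\pR,1}$ I apply Kac-Todorov to the level-$\ell$ currents coming from $\lambda_i$ together with the fermions on $\H_{\pR,1}$: the standard $N=1$ supercharge (a normal-ordered sum $\sum_{a,n}J^a_{-n}\psi^a_n$ plus a cubic fermionic correction built from the structure constants of $\lg$) produces an odd selfadjoint operator $D_1$ with
\[
D_1^2 \;=\; L_0^{\lambda_i}\otimes\unit + \unit\otimes L_0^{\pR,1} - \frac{c_1}{24}\,\unit,\qquad c_1=\frac{d\ell}{\ell+h^\vee}+\frac{d}{2}.
\]
An analogous construction on $\H_{\lambda_0}\otimes\H_{\pR,2}$ yields $D_2$ with the same identity and $c_2=c_1$. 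I then set $D_{\lambda_i}:=D_1\,\widehat\otimes\,\unit+\unit\,\widehat\otimes\,D_2$ on $\hat\H_{\lambda_i}$, regarded as the graded tensor product of these two blocks. Since $D_1,D_2$ are odd and live on distinct graded factors they anticommute, so
\[
D_{\lambda_i}^2 \;=\; D_1^2+D_2^2 \;=\; \hat L_0^{\lambda_i}-\frac{c}{24}\,\unit,\qquad c=c_1+c_2=2\Bigl(\frac{d}{2}+\frac{d\ell}{\ell+h^\vee}\Bigr),
\]
as claimed; oddness and selfadjointness of $D_{\lambda_i}$ are immediate. To rule out $0$ from the spectrum I observe that the smallest eigenvalue of $\hat L_0^{\lambda_i}$ is $h_{\lambda_i}+0+d/8$ (the vacuum $\lambda_0$ contributes $0$ and the Ramond ground state contributes $d/8$), so
\[
D_{\lambda_i}^2 \;\geq\; h_{\lambda_i}+\frac{d}{8}-\frac{c}{24} \;=\; h_{\lambda_i}+\frac{d\,h^\vee}{12(\ell+h^\vee)} \;>\; 0,
\]
which provides a strictly positive spectral gap for $D_{\lambda_i}^2$ and hence $0\notin\spec(D_{\lambda_i})$.

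The main technical hurdle is the operator-theoretic verification of the Kac-Todorov identity for each $D_j^2$: one has to exhibit a common invariant core (the algebraic highest-weight module tensored with the algebraic Ramond module is the natural candidate) on which the normal-ordered series converges strongly and the super-Virasoro commutation relations can be checked, and then take the operator closure. This is essentially the content of \cite{KT}. Once it is in place, the rest of the proof is just a matter of assembling the two blocks compatibly with the $\Z/2$-gradings and chasing the central-charge numerology.
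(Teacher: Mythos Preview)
Your proof is correct and follows essentially the same route as the paper: both invoke the Kac--Todorov super-Sugawara construction to produce $D_{\lambda_i}=G_0$ satisfying $G_0^2=L_0-\frac{c}{24}$, and both establish the spectral gap via the same arithmetic $\frac{d}{8}-\frac{c}{24}=\frac{dh^\vee}{12(\ell+h^\vee)}>0$. The only difference is organizational: the paper applies the construction in one stroke to the semisimple algebra $\lg_\C\oplus\lg_\C$ together with all $2d$ fermions---explicitly noting that the structure constants $f_{abc}$ vanish whenever the indices lie in different summands, so that the supercharge decouples---whereas you make this decoupling manifest from the outset by splitting into two simple blocks and reassembling via the graded tensor product. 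The resulting operator $D_{\lambda_i}$ is literally the same in both presentations.
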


\begin{proof}
For the proof we will need a couple of background facts. For complete details we refer to \cite{Tol} together with \cite[Ch.9]{PS} and \cite{Kac,KT}. The notation used here has been introduced and explained in \cite[Sect.6]{CHL}.

By construction, the positive-energy representation $\lambda_i$ of $\Loop G$ at level $\ell$ comes from a representation of the affine Kac-Moody algebra $\hat{\lg}_\C$ on (the dense subspace of finite energy vectors of) $\H_{\lambda_i}$ by integration, denoted again by $\lambda_i$. Let us consider the representation 
\[
\Lambda_i:=\lambda_i\otimes\pR:\hat{\lg}_\C\oplus\CAR(\K,\gamma) \ra B(\hat{\H}_{\lambda_i}).
\]
Its generators are given by even $J_n^{a,\Lambda_i} :=J_n^{a,\lambda_i}\otimes\unit_\pR$ and odd $F_n^{a,\Lambda_i} :=\unit_{\lambda_i}\otimes F_n^{a,\pR}$, with $a=1,\ldots,d$ and $n\in\Z$ (we shall henceforth drop the ``$\otimes \unit$'' if confusion is unlikely), and they satisfy the (anti-) commutation relations
\begin{align*}
[J_m^{a,\Lambda_i},J_n^{b,\Lambda_i}] =& \sum_{c=1}^{d} \rmi f_{abc} J_{m+n}^{c,\Lambda_i} + \delta_{m+n,0} \delta_{a,b} m \ell \unit_{\hat{\H}_{\lambda_i}}, \\
[F_m^{a,\Lambda_i},F_n^{b,\Lambda_i}]_+=& \delta_{m+n,0}\delta_{a,b} \unit_{\hat{\H}_{\lambda_i}},\\
[J_m^{a,\Lambda_i},F_n^{b,\Lambda_i}] =& 0.
\end{align*}
Here $f_{abc}$ are the structure constants of $\lg_\C$ with respect to a fixed orthonormal basis $(e_a)_{a=1,\ldots,d}$ of the Lie algebra $\lg_\C$ satisfying the orthonormality condition with respect to the normalized Killing form
\begin{equation*}
-\frac{1}{2h^\vee}\tr \left(\Ad(e_a) \Ad(e_b)\right) = \delta_{a,b} \quad a, b = 1,\dots,d \,,
\end{equation*}
where $h^\vee$ is the dual Coxeter number of $\lg_\C$.  
Moreover, we define
\[
J_n'^{a,\Lambda_i}:= -\frac{\mathrm{i}}{2} \sum_{m\in\Z} \sum_{b,c=1}^{d} f_{abc} F^{b,\Lambda_i}_m F^{c,\Lambda_i}_{n-m},
\]
which has the commutation relations
\begin{align*}
[J_m'^{a,\Lambda_i},J_n'^{b,\Lambda_i}] =& \sum_{c=1}^{d} \rmi f_{abc} J_{m+n}'^{c,\Lambda_i} + \delta_{m+n,0} \delta_{a,b} m h^\vee \unit_{\hat{\H}_{\lambda_i}}, \\
[J_m'^{a,\Lambda_i},F_n^{b,\Lambda_i}] =& \sum_{c=1}^{d}  \mathrm{i} f_{abc} F_{m+n}^{c,\Lambda_i}.
\end{align*}
Then the even operators $L_n^{\Lambda_i}$ and odd $G_n^{\Lambda_i}$, with $n\in\Z$, defined through the super-Sugawara construction 
\cite{KT} (\cf also \cite[Sect.6]{CHL}, \cite[Sect. 5.9]{Kac1998} and \cite[Sect. III.13]{Was2010}) as
\begin{equation}\label{eq:Sugawara}
\begin{aligned}
G_n^{\Lambda_i} :=&  \frac{1}{\sqrt{{\ell}+h^\vee}}\sum_{a=1}^{d} \sum_{m\in\Z} : \left( J_m^{a,\Lambda_i} + \frac13 J_m'^{a,\Lambda_i} \right) F^{a,\Lambda_i}_{n-m} : \\ 
L_n^{\Lambda_i} :=& \sum_{a=1}^{d} \left(\frac{1}{2({\ell}+h^\vee)} \sum_{m\in\Z} :J^{a,\Lambda_i}_m J^{a,\Lambda_i}_{n-m}: 
-\frac12\sum_{m\in \Z} m  :F^{a,\Lambda_i}_{m}F^{a,\Lambda_i}_{n-m} :\right)
+ \frac{d}{16} \delta_{n,0}\unit_{\hat{\H}_{\lambda_i}} 
 \end{aligned}
\end{equation} 
(where $: \; :$ stands for the normally ordered product) satisfy the Ramond super-Virasoro algebra (anti-) commutation relations
\begin{equation}
\begin{aligned}\label{eq:superVir}
    [L_m^{\Lambda_i} , L_n^{\Lambda_i}] =& (m-n)L_{m+n}^{\Lambda_i} + \frac{c}{12}(m^3 - m)\de_{m+n, 0}\unit_{\hat{\H}_{\lambda_i}},\\
    [L_m^{\Lambda_i}, G_n^{\Lambda_i}] =& \Big(\frac{m}{2} - n\Big)G_{m+n}^{\Lambda_i},\\
    [G_m^{\Lambda_i}, G_n^{\Lambda_i}]_+ =& 2L_{m+n} ^{\Lambda_i}+ \frac{c}{3}\Big(m^2 - \frac14\Big)\de_{m+n,0}\unit_{\hat{\H}_{\lambda_i}},
    \end{aligned}
\end{equation}
with central charge $c=\frac{d}{2} + \frac{d{\ell}}{{\ell}+h^\vee}$ \cite[(5.8)]{KT}.
Moreover, $L^{\Lambda_i}_0 = \hat{L}^{\lambda_i}_0$
so that choosing $D_{\lambda_i}:= G^{\Lambda_i}_0$ proves the main part of our proposition. We remark that \cite[Eq. (6.4)]{CHL} contains small mistakes which do not influence the rest of that paper though; the correct version in the Ramond case is \eqref{eq:Sugawara} here.

Concerning the spectrum of $D_{\lambda_i}$, we see from Eq. (\ref{hatL}) that $L_0^{\Lambda_i}$ is bounded below by $h_R= \frac{d}{16}$. 
Hence, we find
\[
D_{\lambda_i}^2 = L_0^{\Lambda_i} - \frac{c}{24}\unit_{\hat{\H}_{\lambda_i}}
\ge \frac{d}{16} \unit_{\hat{\H}_{\lambda_i}}  - \frac{1}{48} \Big(d + \frac{2 d{\ell}}{{\ell}+h^\vee} \Big)\unit_{\hat{\H}_{\lambda_i}}
= d \Big(\frac{1}{24}- \frac{{\ell}}{24({\ell}+h^\vee)}\Big)\unit_{\hat{\H}_{\lambda_i}} > 0,
\]
as $\ell$ and $h^\vee$ are positive.
\end{proof}

As in the preceding proof, we denote the \textit{infinitesimal generator of rotations} in the representation $\hat{\pi}_{\lambda_i}$ by $L_0^{\Lambda_i}=\hat{L}_0^{\lambda_i}$; it coincides then with $D_{\lambda_i}^2$ up to an additive constant. The representations of $\hat{\lg}_\C$ we are actually interested in are $\lambda_i$, while $\Lambda_i$ are the corresponding ones of $\hat{\lg}_\C\oplus\CAR(\K,\gamma)$ needed for the super-Sugawara construction.

\begin{proposition}\label{prop:domain}
Let $\hat{\pi}_{\red}:=\bigoplus_{i=0}^{N-1}\hat{\pi}_{\lambda_i}$ and $\delta_{\red}:=\bigoplus_{i=0}^{N-1} \delta_{D_{\lambda_i}} = \delta_{D_{\red}}$, 
where $D_{\red} :=\bigoplus_{i=0}^{N-1} D_{\lambda_i}$. 
Define the \emph{compact differentiable subalgebra} of $\BG$ as
\[
\kone:= \{x\in\kG : \hat{\pi}_{\red}(x) \in \dom(\delta_{\red}), \delta_{\red}( \hat{\pi}_{\red}(x))\in K(\hat{\H}_{\red})\}.
\]
The norm
\[
\|x\|_1 = \|x\| + \|\delta_{\red}(\hat{\pi}_{\red}(x))\|_{B(\hat{\H}_{\red})}, \quad x\in\kone.
\]
is well-defined and turns $\kone$ into a Banach algebra, so that the maps $\hat{\pi}_{\lambda_i}: \kone \ra (\dom(\delta_{D_{\lambda_i}}), \|\cdot 
\|_{B(\hat{\H}_{\lambda_i})}+\|\delta_{D_{\lambda_i}}(\cdot)\|_{B(\hat{\H}_{\lambda_i})})$, $i=0,\ldots,N-1$, are continuous.
\end{proposition}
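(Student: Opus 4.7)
The plan is to proceed in four logical steps: verifying that $\|\cdot\|_1$ makes sense on $\kone$, that $\kone$ is closed under multiplication with $\|\cdot\|_1$ submultiplicative, that $\kone$ is complete, and finally that each $\hat{\pi}_{\lambda_i}$ is continuous into the indicated Banach algebra. The underlying tool throughout will be the fact that $\dom(\delta_D)$, for any selfadjoint $D$, is a subalgebra of $B(\H)$ on which $\delta_D$ is a derivation, together with the block-diagonal structure of $\hat{\pi}_{\red}$ and $D_{\red}$.

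Well-definedness is immediate: for $x\in\kone$, by definition $\hat{\pi}_{\red}(x)\in\dom(\delta_{\red})$ and hence $\delta_{\red}(\hat{\pi}_{\red}(x))\in B(\hat{\H}_{\red})$. For the algebra property, given $x,y\in\kone$, I would apply the Leibniz rule in $\dom(\delta_{\red})$ to obtain
\[
\delta_{\red}\bigl(\hat{\pi}_{\red}(xy)\bigr)=\delta_{\red}\bigl(\hat{\pi}_{\red}(x)\bigr)\hat{\pi}_{\red}(y)+\hat{\pi}_{\red}(x)\,\delta_{\red}\bigl(\hat{\pi}_{\red}(y)\bigr),
\]
which is compact as sum of a compact times bounded and bounded times compact operator, so $xy\in\kone$. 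Using that $\hat{\pi}_{\red}$ is isometric on $\kG$ (it is a faithful representation of the C*-algebra $\kG\simeq\bigoplus_iK(\H_{\lambda_i})$), the Leibniz formula immediately gives $\|xy\|_1\le\|x\|_1\|y\|_1$ after regrouping terms, so $(\kone,\|\cdot\|_1)$ is a normed algebra.

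For completeness, let $(x_n)\subset\kone$ be $\|\cdot\|_1$-Cauchy. Then $(x_n)$ is $\|\cdot\|$-Cauchy in $\kG$, hence converges in norm to some $x\in\kG$, and $(\delta_{\red}(\hat{\pi}_{\red}(x_n)))$ is $\|\cdot\|_{B(\hat{\H}_{\red})}$-Cauchy, hence converges in norm to some $S\in K(\hat{\H}_{\red})$ (closedness of the compacts). The key point is now to show $\hat{\pi}_{\red}(x)\in\dom(\delta_{\red})$ with $\delta_{\red}(\hat{\pi}_{\red}(x))=S$; I would do this by testing on any $\xi\in\dom(D_{\red})$, writing
\[
D_{\red}\hat{\pi}_{\red}(x_n)\xi=\hat{\pi}_{\red}(x_n)D_{\red}\xi+\delta_{\red}(\hat{\pi}_{\red}(x_n))\xi,
\]
noting that the right-hand side converges to $\hat{\pi}_{\red}(x)D_{\red}\xi+S\xi$ while $\hat{\pi}_{\red}(x_n)\xi\to\hat{\pi}_{\red}(x)\xi$, and invoking closedness of the selfadjoint operator $D_{\red}$ to conclude $\hat{\pi}_{\red}(x)\xi\in\dom(D_{\red})$ with the desired commutator identity. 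This yields $x\in\kone$ and $\|x_n-x\|_1\to0$. This closed-graph-style argument is the only slightly delicate point.

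Finally, continuity of each $\hat{\pi}_{\lambda_i}$ follows at once from the block-diagonal structure: since $\hat{\pi}_{\red}=\bigoplus_j\hat{\pi}_{\lambda_j}$ and $D_{\red}=\bigoplus_jD_{\lambda_j}$ act diagonally, $\delta_{\red}(\hat{\pi}_{\red}(x))=\bigoplus_j\delta_{D_{\lambda_j}}(\hat{\pi}_{\lambda_j}(x))$ and its norm is the supremum of the block norms. Hence $\|\hat{\pi}_{\lambda_i}(x)\|+\|\delta_{D_{\lambda_i}}(\hat{\pi}_{\lambda_i}(x))\|\le\|x\|+\|\delta_{\red}(\hat{\pi}_{\red}(x))\|=\|x\|_1$ for every $x\in\kone$, which gives the required continuity with constant one.
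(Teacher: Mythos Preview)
Your proof is correct and follows essentially the same route as the paper's: both hinge on the fact that $(\dom(\delta_{D_{\red}}),\|\cdot\|+\|\delta_{D_{\red}}(\cdot)\|)$ is a Banach algebra, together with closedness of $\kG$ and of $K(\hat{\H}_{\red})$. The paper simply invokes \cite[Sect.~3.2]{BR} for this completeness and then intersects the three closed subalgebras, whereas you unpack the closed-graph argument for $\delta_{\red}$ explicitly and also spell out the submultiplicativity and the block-diagonal continuity estimate for $\hat{\pi}_{\lambda_i}$, which the paper leaves implicit.
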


\begin{proof}
$\|\cdot\|_1$ is clearly a norm and well-defined on $\hat{\pi}_{\red}^{-1}(\dom(\delta_{\red}))$ and turns it into a Banach algebra. Moreover, $(\kG,\|\cdot\|)$ is a Banach algebra. As $\|\cdot\|_1$ is finer than $\|\cdot\|$, we see that the intersection $\kG\cap  \hat{\pi}_{\red}^{-1}(\dom(\delta_{\red}))$ is a Banach algebra, too, w.r.t. $\|\cdot\|_1$. The fact that $(K(\hat{\H}_{\red}),\|\cdot\|_{B(\hat{\H}_{\red})})$ is complete shows then that $(\kone,\|\cdot\|_1)$ is a Banach algebra.
\end{proof}

\begin{proposition}\label{prop:stably}
The Banach algebra $\kone$ is stably-unital, and its finite-rank elements are dense.
\end{proposition}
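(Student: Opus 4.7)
The plan is to build an approximate identity of finite-rank idempotents in $\kone$, from which both stable-unitality and the density of finite-rank elements will follow at once.

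For each $i\in\{0,\ldots,N-1\}$ and $n\in\N$, let $p_n^{\lambda_i}\in K(\H_{\lambda_i})$ be the spectral projection of $L_0^{\lambda_i}$ onto eigenvalues $\le n$. Positive-energy representations have finite-dimensional $L_0$-eigenspaces, so $p_n^{\lambda_i}$ is finite-rank, and $p_n^{\lambda_i}\to \unit_{\H_{\lambda_i}}$ strongly. Set $P_n := \bigoplus_i p_n^{\lambda_i}\in\kG$. I would first verify that $P_n\in\kone$. The key observation is that $\hat{P}_0 := \unit_{\H_{\red}}\otimes q_{\Omega_{\lambda_0}}\otimes e_R$ commutes with each $\hat{L}_0^{\lambda_i}$ (because $q_{\Omega_{\lambda_0}}$ and $e_R$ are eigenprojections of $L_0^{\lambda_0}$ and $L_0^{\pR}$ respectively), hence with $D_{\red}^2=\hat{L}_0-\tfrac{c}{24}\unit$. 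A direct inspection of the eigenspaces shows that $\hat{\pi}_{\red}(P_n) = F_n\hat{P}_0$, where $F_n$ is the spectral projection of $D_{\red}^2$ onto a bounded interval depending on $n$. Since $F_n$ commutes with $D_{\red}$ itself, one has $F_n\in\dom(\delta_{\red})$ with $\delta_{\red}(F_n)=0$, and $\hat{\pi}_{\red}(P_n)$ is a finite-rank projection whose range sits in $\dom(D_{\red})$, from which a standard argument using self-adjointness of $D_{\red}$ gives $\hat{\pi}_{\red}(P_n)\in\dom(\delta_{\red})$ with finite-rank (hence compact) derivative; thus $P_n\in\kone$.

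Next I would show that $\{P_n\}$ is an approximate identity for $(\kone,\|\cdot\|_1)$. For $x\in\kone$, the $\|\cdot\|_{\kG}$-part of $\|P_n x-x\|_1$ tends to zero because $x$ is compact and $P_n\to\unit$ strongly. For the derivation part, using $\hat{P}_0\hat{\pi}_{\red}(x)=\hat{\pi}_{\red}(x)$, $\delta_{\red}(F_n)=0$, and the Leibniz rule,
\[
\delta_{\red}\bigl(\hat{\pi}_{\red}(P_n x)\bigr)=\delta_{\red}\bigl(F_n\hat{\pi}_{\red}(x)\bigr)=F_n\,\delta_{\red}\bigl(\hat{\pi}_{\red}(x)\bigr);
\]
since $\delta_{\red}(\hat{\pi}_{\red}(x))$ is compact by the definition of $\kone$ and $F_n\to\unit$ strongly, $(F_n-\unit)\delta_{\red}(\hat{\pi}_{\red}(x))\to 0$ in operator norm, and the symmetric computation handles $xP_n$. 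Therefore $\|P_nx-x\|_1, \|xP_n-x\|_1\to0$.

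The existence of such an approximate identity of idempotents immediately gives one in $M_\infty(\kone)$ (place the $P_n$'s on the diagonal), so $\kone$ is stably-unital in the sense of Definition \ref{def:K0}. Moreover, for any $x\in\kone$ the element $P_n x\in\kone$ has finite rank and converges to $x$ in $\|\cdot\|_1$, so finite-rank elements are $\|\cdot\|_1$-dense in $\kone$. The hardest step is the identification $\hat{\pi}_{\red}(P_n)=F_n\hat{P}_0$ together with the commutativity $[D_{\red},F_n]=0$, which is what lets the Leibniz-rule computation collapse; this relies crucially on the specific form of $D_{\red}$ coming from the super-Sugawara construction, where $D_{\red}^2$ coincides with the total conformal Hamiltonian up to a scalar.
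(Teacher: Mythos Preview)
Your proof is correct and follows essentially the same approach as the paper's: your $P_n$ is the paper's $e_n$, your $F_n$ is the paper's $\hat{e}_n$ (the spectral projection of $L_0^{\Lambda_{\red}}=D_{\red}^2+\tfrac{c}{24}$ for eigenvalues below $n+h_R$), and the key identity $\hat\pi_{\red}(P_n x)=F_n\hat\pi_{\red}(x)$ together with $\delta_{\red}(F_n)=0$ is exactly what the paper uses to collapse the Leibniz computation. The only cosmetic difference is that the paper phrases everything in terms of $L_0^{\Lambda_{\red}}$ rather than $D_{\red}^2$, which is the same operator up to an additive constant.
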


\begin{proof}
We shall construct an approximate identity $(e_n)_{n\in\N}$ of finite-rank projections in $\kone$. Once this is done, then given any $x\in\kone$, we see that $e_n x$ is finite-rank and converges to $x$, proving also the second statement.

To start with, notice that 
$L_0^{\lambda_{\red}} =\bigoplus_{i=0}^{N-1} L_0^{\lambda_{i}}$ 
has positive discrete spectrum with every eigenvalue of finite multiplicity \cite[(9.3.4)]{PS}.
The same holds for $L_0^\pR$ and hence for 
\begin{eqnarray*}
L_0^{\Lambda_{\red}} &:=& \bigoplus_{i=0}^{N-1} L_0^{\Lambda_{i}} \\
&=& L^{\lambda_{\red}}_0 \otimes \unit_{\H_{\pR}} + 
\unit_{\H_{\lambda_{\red}}} \otimes L^{\pR}_0 . 
\end{eqnarray*}
For every $n\in\N$, let $e_n$ denote the spectral projection of $L_0^{\lambda_{\red}}$ corresponding to the eigenvalues less than $n$. 
It is a finite rank projection in $\kG$.
Similarly, let $\hat{e}_n\in B(\hat{\H}_{\red})$ denote the spectral projection of $L_0^{\Lambda_{\red}}$ corresponding to the eigenvalues less than 
$n+h_R$. Then $\hat{e}_n$ is a finite rank projection commuting with $D_{\red}$ and hence $D_{\red}\hat{e}_n$ is bounded with domain 
$\hat{\H}_{\red}$. In particular $\hat{e}_n \in \dom(\delta_{\red})$ and  $\delta_{\red}(\hat{e}_n)=0$.

Moreover, $\hat{\pi}_{\red}(e_n)=( \unit_{\H_{\red}} \otimes e_R) \hat{e}_n$ is a subprojection of $\hat{e}_n$, and for every $x\in\kone$, we have $\hat{\pi}_{\red}(e_n x)=\hat{e}_n \hat{\pi}_{\red}(x)$. It follows that $\hat{\pi}_{\red}(e_n)$ is compact and in $\dom(\delta_{\red})$, with $\delta_{\red}(\hat{\pi}_{\red}(e_n))$ compact again so that $e_n \in \kone$. In general, we furthermore have $\hat{\pi}_{\red}(\kone)\subset K(\hat{\H}_{\red})$. Recalling that $\delta_{\red}(\hat{e}_n)=0$ we have, for every $x\in\kone$,
\begin{align*}
\|e_n x -x\|_1=& \|e_n x -x \| + \| \delta_{\red}(\hat{e}_n \hat{\pi}_{\red}(x)) - \delta_{\red}(\hat{\pi}_{\red}(x)) \|_{B(\hat{\H}_{\red})}\\
=& \|e_n x -x \| + \| \hat{e}_n \delta_{\red}(\hat{\pi}_{\red}(x)) - \delta_{\red}(\hat{\pi}_{\red}(x)) \|_{B(\hat{\H}_{\red})}.
\end{align*}
Now,  $e_n \to \unit_{\H_{\red}}$ and $\hat{e}_n \to \unit_{\hat{\H}_{\red}}$ as $n \to \infty$, in the strong topology. 
As a consequence, $(e_n)_{n\in\N}$ is an approximate identity for $\kG$ and $(\hat{e}_n)_{n\in\N}$ for $K(\hat{\H}_{\red})$. As $\delta_{\red}(\hat{\pi}_{\red}(x))\in K(\hat{\H}_{\red})$ by assumption, the right hand side goes to zero. Thus $(e_n)_{n\in\N}$ is an approximate identity for $\kone$.
\end{proof}

\begin{definition}\label{def:plambda}
For every $i=0,\ldots,N-1$, the \emph{lowest energy projection} $p_{\lambda_i}$ of $\lambda_i$ is defined to be the unique minimal projection in $\BG$ such that $\pi_{\lambda_i}(p_{\lambda_i}) = q_{\Omega_{\lambda_i}}$.
\end{definition}

It is simply $p_i$ in Section \ref{sec:DHR} if one chooses $\A=\A_{G_\ell}$ there.

We recall that a subalgebra $A$ of a Banach algebra $B$ is said to be \emph{closed under holomorphic functional calculus} if for every $x\in \tilde{A}$ and every function $f$ which is defined and holomorphic on a neighborhood of the spectrum $\sigma(x)$ of $x$ in the unitalization $\tilde{B}$, the element $f(x)\in \tilde{B}$ lies in $\tilde{A}$, \cf \cite[3.App.C]{Con94}. For this it is sufficient to show that $(x-\mu\unit)^{-1}\in\tilde{A}$, for every $\mu\not\in\sigma(x)$, owing to the Cauchy integral formula.

\begin{proposition}\label{prop:kinf}
The following holds:
\begin{itemize}
\item[(i)] $p_{\lambda_i}\in\kone$, for every $i=0,\ldots,N-1$, determining a class $[p_{\lambda_i}]\in K_0(\kone)$.
\item[(ii)]  $\kone\subset \kG$ is closed under holomorphic functional calculus and $\overline{\kone}^{\|\cdot\|} = \kG$.
\item[(iii)] $K_0(\kone)=K_0(\kG) = \Z^N$.
\end{itemize}
\end{proposition}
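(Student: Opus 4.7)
The plan is to handle the three parts in order, with (iii) following formally from (ii).

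For (i), observe that $p_{\lambda_i}$ has one-dimensional range $\C\Omega_{\lambda_i}\subset\H_{\lambda_i}$, hence is a finite-rank element of $\kG$. Its image $\hat{\pi}_{\red}(p_{\lambda_i})$ is the rank-one projection onto $\C v$, where $v:=\Omega_{\lambda_i}\otimes\Omega_{\lambda_0}\otimes\eta_R$ with $\eta_R$ a unit vector spanning $e_R\H_{\pR,0,+}$. This $v$ is an eigenvector of $L_0^{\Lambda_i}$ (eigenvalue $h_{\lambda_i}+h_R$), hence of $D_{\lambda_i}^2$, so in particular $v\in\dom D_{\lambda_i}^2\subset\dom D_{\lambda_i}$. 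A direct computation on $\dom D_{\lambda_i}$ gives
\[
[D_{\lambda_i},\hat{\pi}_{\red}(p_{\lambda_i})]=|D_{\lambda_i}v\rangle\langle v|-|v\rangle\langle D_{\lambda_i}v|,
\]
a rank-at-most-two, hence compact, operator. Therefore $\hat{\pi}_{\red}(p_{\lambda_i})\in\dom\delta_{\red}$ with compact $\delta_{\red}$-image, so $p_{\lambda_i}\in\kone$, and $[p_{\lambda_i}]\in K_0(\kone)$ is well defined by Proposition~\ref{prop:stably}.

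For (ii), density follows from the same rank-one argument: for any $a,b\in\dom L_0^{\lambda_i}$ the operator $|a\rangle\langle b|\in K(\H_{\lambda_i})\subset\kG$ already lies in $\kone$, since the vectors $a\otimes\Omega_{\lambda_0}\otimes\eta_R$ and $b\otimes\Omega_{\lambda_0}\otimes\eta_R$ sit in $\dom L_0^{\Lambda_i}\subset\dom D_{\lambda_i}$ (because $\Omega_{\lambda_0}$ and $\eta_R$ are eigenvectors of the respective conformal Hamiltonians). Since $\dom L_0^{\lambda_i}$ is dense in $\H_{\lambda_i}$ and $\kG=\bigoplus_{i=0}^{N-1}K(\H_{\lambda_i})$, these operators span a norm-dense subalgebra of $\kG$. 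For closedness under holomorphic functional calculus I would invoke the standard fact (cf. Bratteli--Robinson) that for a closed $*$-derivation $\delta$ on a unital $C^*$-algebra, any $b\in\dom\delta$ which is invertible in the ambient algebra satisfies $b^{-1}\in\dom\delta$ with $\delta(b^{-1})=-b^{-1}\delta(b)b^{-1}$. Applied to $b=\unit+\hat{\pi}_{\red}(x)$ for $x\in\kone$ with $\unit+x$ invertible in $B(\H_{\red})$, together with the fact that $\kG\subset B(\H_{\red})$ is a $C^*$-subalgebra (so that $(\unit+x)^{-1}-\unit\in\kG$ automatically), this resolvent formula expresses $\delta_{\red}((\unit+\hat{\pi}_{\red}(x))^{-1})$ as a product containing the compact element $\delta_{\red}(\hat{\pi}_{\red}(x))$, hence compact. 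The Cauchy integral formula then promotes this resolvent statement to closedness under holomorphic functional calculus.

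Part (iii) is then formal. Each $\H_{\lambda_i}$ is separable and infinite-dimensional, so $\kG=\bigoplus_{i=0}^{N-1}K(\H_{\lambda_i})$ yields $K_0(\kG)=\bigoplus_{i=0}^{N-1}K_0(K(\H_{\lambda_i}))=\Z^N$. By (ii) the embedding $\kone\hookrightarrow\kG$ is dense and closed under holomorphic functional calculus, so the standard density theorem (see e.g.\ \cite[III.C, Proposition~3]{Con94} or \cite[6.1.2]{Bla}) yields $K_0(\kone)\cong K_0(\kG)$. The main technical hurdle I foresee is the holomorphic closedness in (ii); once the resolvent identity is in hand, the compactness stability built into the definition of $\kone$ makes the remaining verifications routine, and (i) and (iii) reduce to the explicit rank-at-most-two shape of the commutators.
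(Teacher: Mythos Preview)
Your proof is correct. The argument for holomorphic closedness in (ii) is exactly the paper's: both invoke the Bratteli--Robinson resolvent identity $\delta(b^{-1})=-b^{-1}\delta(b)b^{-1}$ to see that $\delta_{\red}\big(\hat{\pi}_{\red}((x-\mu\unit)^{-1})\big)$ is compact whenever $\delta_{\red}(\hat{\pi}_{\red}(x))$ is, and then pass to general holomorphic functions via the Cauchy integral. Part (iii) is likewise identical.

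The only difference is in (i) and in the density half of (ii). The paper recycles the approximate identity $(e_n)$ of spectral projections built in the preceding proposition: since $D_{\red}\hat{\pi}_{\red}(e_n)$ is bounded, every $e_n x e_n$ lies in $\kone$, and $p_{\lambda_i}=e_n p_{\lambda_i} e_n$ for large $n$; density then comes from $e_n x e_n \to x$ in norm. You instead compute the commutator $[D_{\lambda_i},|v\rangle\langle w|]$ directly for vectors $v,w$ in the domain of $D_{\lambda_i}$ and observe it has rank at most two. Your route is self-contained and avoids the dependence on the previous proposition, while the paper's route has the advantage of getting (i), density, and the approximate-identity statement from a single construction. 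Both are elementary and neither offers a real technical saving over the other.
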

\begin{proof}
(i) We see from the proof of Proposition \ref{prop:stably} that the spectral projections $e_n$ defined there are such that 
$D_{\red} \hat{\pi}_{\red}(e_n)$ is bounded with domain $\hat{\H}_{\red}$ for every positive integer $n$. As a consequence, for every 
$x\in \BG$, $\hat{\pi}_{\red}(e_nxe_n)$ is in the domain of $\delta_{\red}$ and  $\delta_{\red}\big(\hat{\pi}_{\red}(e_nxe_n)\big)$ has finite rank so that $e_nxe_n \in \kone$. For $n$ sufficiently large we have that $p_{\lambda_i} = e_n p_{\lambda_i} e_n$, thus 
$p_{\lambda_i} \in \kone$.

(ii) Given $x\in\tkone$ and $\mu\not\in\sigma(x)$, where the spectrum is w.r.t. the C*-algebra $\tkG$, we have to show that 
$(x-\mu\unit)^{-1}\in \tkone$. To this end, notice that $(x-\mu\unit)^{-1}\in\tkG\cap \hat{\pi}_{\red}^{-1}(\dom(\delta_{\red}))$ because both $\kG$ and $\hat{\pi}_{\red}^{-1}(\dom(\delta_{\red}))$ are closed under holomorphic functional calculus, \cf \cite[Prop.3.2.29]{BR} for the second case, while for $\kG$ this is clear since it is a Banach algebra w.r.t.~$\|\cdot\|$. Moreover, the latter reference shows
\begin{align*}
\delta_{\red}\big(\hat{\pi}_{\red}((x-\mu\unit)^{-1})\big)
=& \delta_{\red}\big((\hat{\pi}_{\red}(x)-\mu\unit)^{-1}\big)\\ 
=& - \big(\hat{\pi}_{\red}(x)-\mu\unit \big)^{-1} \delta_{\red}(\hat{\pi}_{\red}(x)) \big(\hat{\pi}_{\red}(x)-\mu\unit\big)^{-1}
\end{align*}
which is compact because $\delta_{\red}(\hat{\pi}_{\red}(x))$ is compact. Hence, $(x-\mu\unit)^{-1}\in \tkone$.

To see that $\kone$ is a norm dense subalgebra of $\kG$ it is enough to note that 
$$\lim_{n\to \infty} \|e_nx e_n - x\| = 0$$ for every 
$x \in \kG$.  

(iii) follows immediately from (ii) together with \cite[3.App.C]{Con94} or \cite[5.1.2]{Bla}.
\end{proof}

\begin{theorem}\label{th:pairing}
For every $i=0,\ldots,N-1$, $(\kone,\hat{\pi}_{\lambda_i},D_{\lambda_i})$ forms an even $\theta$-summable spectral triple with nonunital $\kone$. It gives rise to the even JLO entire cyclic cocycle $\tau_{\lambda_i}\in HE^e(\kone)$. The JLO cocycle $\tau_{\lambda_i}$ pairs with $K_0(\kG)$ and
\begin{equation}\label{eq:pairing}
\tau_{\lambda_i} (p_{\lambda_j}) = \langle [\tau_{\lambda_i}],[p_{\lambda_j}]\rangle = \delta_{ij},\quad i,j=0,\ldots,N-1.
\end{equation}
\end{theorem}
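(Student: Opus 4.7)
The plan is to check the spectral triple axioms, invoke Theorem \ref{th:ECCJLO} to obtain the JLO cocycle and its K-theoretic pairing, and then evaluate that pairing via the Fredholm index formula. First, I would verify the ingredients of Definition \ref{def:SpTr}: $\kone$ is a nonunital Banach algebra by Proposition \ref{prop:domain}; the grading $\hat{\Gamma}_{\lambda_i}$ commutes with $\hat{\pi}_{\lambda_i}$ since $e_R$ is even, so the representation is even; $D_{\lambda_i}=G^{\Lambda_i}_0$ is self-adjoint and odd (standard for the Ramond super-Virasoro $G_0$ arising from the super-Sugawara construction \eqref{eq:Sugawara}); and $\hat{\pi}_{\lambda_i}(\kone)\subset \dom(\delta_{D_{\lambda_i}})$ holds by the very definition of $\kone$ together with $\hat{\pi}_{\red}=\bigoplus_j \hat{\pi}_{\lambda_j}$. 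For $\theta$-summability, Proposition \ref{prop:Sugawara} gives $D_{\lambda_i}^2 = \hat{L}^{\lambda_i}_0 - (c/24)\unit$; since $\hat{L}^{\lambda_i}_0$ is the sum of three commuting operators of the form $L^{\lambda_i}_0, L^{\lambda_0}_0, L^{\pR}_0$, each with discrete spectrum, finite multiplicities and finite partition function at every $t>0$ (standard character convergence for positive-energy representations of $\hat{\lg}_\C$ and for the Ramond CAR representation), $\rme^{-tD_{\lambda_i}^2}$ is trace-class. Continuity of $\hat{\pi}_{\lambda_i}:\kone\to(\dom(\delta_{D_{\lambda_i}}),\|\cdot\|+\|\delta_{D_{\lambda_i}}(\cdot)\|_{B(\hat{\H}_{\lambda_i})})$ is the last assertion of Proposition \ref{prop:domain}.

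With the spectral triple in hand, Theorem \ref{th:ECCJLO}(i) produces the even entire cyclic cocycle $\tau_{\lambda_i}\in HE^e(\kone)$. Proposition \ref{prop:stably} ensures that $\kone$ is stably-unital, so Theorem \ref{th:ECCJLO}(ii) gives a well-defined pairing $\langle [\tau_{\lambda_i}],\cdot\rangle$ on $K_0(\kone)$, which coincides with $K_0(\kG)$ by Proposition \ref{prop:kinf}(iii). The same theorem identifies $\tau_{\lambda_i}(p_{\lambda_j})$ with the Fredholm index of the compressed operator $\hat{\pi}_{\lambda_i}(p_{\lambda_j})_- (D_{\lambda_i})_+\hat{\pi}_{\lambda_i}(p_{\lambda_j})_+$, reducing \eqref{eq:pairing} to an explicit index calculation.

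I would then split into two cases. If $i\ne j$, then under $\BG\simeq\bigoplus_k B(\H_{\lambda_k})$ the minimal projection $p_{\lambda_j}$ sits entirely in the $j$-th summand, so $\pi_{\lambda_i}(p_{\lambda_j})=0$ and hence $\hat{\pi}_{\lambda_i}(p_{\lambda_j})=0$; the Fredholm operator is then zero between two trivial spaces, with index $0$. If $i=j$, then $\hat{\pi}_{\lambda_i}(p_{\lambda_i})=q_{\Omega_{\lambda_i}}\otimes q_{\Omega_{\lambda_0}}\otimes e_R$ is a rank-one projection whose range lies entirely in the $+1$ eigenspace of $\hat{\Gamma}_{\lambda_i}$, since $e_R$ projects onto a one-dimensional subspace of the even part $\H_{\pR,0,+}$. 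Thus $\hat{\pi}_{\lambda_i}(p_{\lambda_i})_-=0$ while $\hat{\pi}_{\lambda_i}(p_{\lambda_i})_+$ is rank-one, and the Fredholm operator becomes the zero map from a one-dimensional space to $\{0\}$, with index $\dim\ker-\dim\ker^*=1-0=1$. Combining both cases yields $\tau_{\lambda_i}(p_{\lambda_j})=\delta_{ij}$, as required. The main delicacy is the parity decomposition of $\hat{\pi}_{\lambda_i}(p_{\lambda_i})$; once one notes that the careful choice of $e_R$ as a rank-one projection inside the even subspace $\H_{\pR,0,+}$ forces the entire one-dimensional range to be even, the index computation becomes immediate.
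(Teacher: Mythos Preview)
Your proposal is correct and follows essentially the same approach as the paper: verify the spectral triple axioms (evenness via the choice of $e_R$, $\theta$-summability via trace-class characters for $L_0^{\lambda_i}$ and $L_0^{\pR}$, differentiability and continuity from the definition of $\kone$ and Proposition~\ref{prop:domain}), invoke Theorem~\ref{th:ECCJLO} together with Propositions~\ref{prop:stably} and~\ref{prop:kinf} to obtain the JLO cocycle and its pairing with $K_0(\kG)$, and then compute the Fredholm index by the same case split, using that $\hat{\pi}_{\lambda_i}(p_{\lambda_j})=0$ for $i\neq j$ and that $\hat{\pi}_{\lambda_i}(p_{\lambda_i})$ is a rank-one projection with purely even range. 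The paper additionally records $pD_{\lambda_i}p=0$ before reading off the index, but your direct parity-dimension argument reaches the same conclusion.
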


\begin{proof}
It is clear that $\hat{\pi}_{\lambda_i}$ is a representation of $\kone$ with image in $\dom(\delta_{D_{\lambda_i}})$ -- in fact, in the even part of $\dom(\delta_{D_{\lambda_i}})$ because $\unit_{\H_{\lambda_i}} \otimes e_R$ is even and $B(\H_{\lambda_i})\otimes\unit_{\H_\pR}$ is even.
As explained in \cite[Sect.6]{CHL} (\cf also \cite[III.7.(iv)]{FG} using the character formulae in \cite[Sect.10]{Kac}), both $\rme^{-t L_0^{\lambda_i}}$ and $\rme^{-t L_0^{\pR}}$ on $\H_{\lambda_i}$ and $\H_{\pR}$, respectively, are trace-class, thus so is $\rme^{-t L_0^{\Lambda_i}}$ on $\hat{\H}_{\lambda_i}$, for every $t>0$, and we have an even $\theta$-summable spectral triple. It induces an even JLO cocycle, which pairs with $K_0(\kG)$ according to Theorem \ref{th:ECCJLO}$(ii)$ and Proposition \ref{prop:kinf}(ii)\&(iii). The actual values can be computed as follows: according to Definition \ref{def:plambda}, $\hat{\pi}_{\lambda_i}(p_{\lambda_j})=0$ if $i\not=j$, and
\begin{equation}\label{eq:p-def}
p:=\hat{\pi}_{\lambda_i}(p_{\lambda_i})= q_{\Omega_{\lambda_i}} \otimes e_R,
\end{equation}
which yields $pD_{\lambda_i}p=0$, $\dim (p\hat{\H}_{\lambda_i,+}) = 1$ and $\dim (p\hat{\H}_{\lambda_i,-}) = 0$, so 
\[
\tau_{\lambda_i}(p_{\lambda_i}) = \langle [\tau_{\lambda_i}],[p_{\lambda_i}]\rangle = \ind_{p\hat{\H}_{\lambda_i,+}}(p D_{\lambda_i} p) = 1 - 0 =1. 
\]
\end{proof}

The C*-algebra $\kG$ lies in the so called bootstrap class \cite[V.1.5.4]{Bl2}. Hence we can apply the universal coefficient theorem \cite[V.1.5.8]{Bl2}
which implies that $\gamma_b: KK(\kG,\kG) \to \mathrm{End}\big({K_0(\kG)}\big)$ is a surjective isomorphism (because $\Ext_\Z^1(\Z^N,\Z^N)=0$) and 
hence, using Proposition \ref{prop:kinf}(iii), $KK(\kG,\kG) \simeq \mathrm{End}\big({K_0(\kG)}\big) \simeq \mathrm{End}(\Z^N)$. It also implies that 
$\gamma_a : K^0(\kG) \to \mathrm{Hom}\big(K_0(\kG),\Z \big)$ is a surjective isomorphism so that $K^0(\kG) \simeq \Z^N$. 
It follows that $\gamma_c: KK(\kG,\kG) \to \mathrm{End}\big({K^0(\kG)}\big)$ is a surjective isomorphism, too, so that 
$KK(\kG,\kG) \simeq \mathrm{End}\big({K^0(\kG)}\big) \simeq \mathrm{End}(\Z^N)$.

For any $i=0,\dots,N-1$, one associates to the $\theta$-summable spectral triple $(\kone,\hat{\pi}_{\lambda_i},D_{\lambda_i})$ determining $\tau_{\lambda_i}$ the Fredholm module $(\hat{\H}_{\lambda_i},\hat{\pi}_{\lambda_i},\sgn D_{\lambda_i})$. Here 
$\sgn D_{\lambda_i} = D_{\lambda_i} |D_{\lambda_i}|^{-1}$ is the signature of $D_{\lambda_i}$ (recall that $0$ is not in the spectrum of 
$D_{\lambda_i}$ according to Proposition \ref{prop:Sugawara}). It gives rise to the same index map as $D_{\lambda_i}$, \cf 
\cite[Sect.\,IV.8.$\delta$]{Con94}. We write $\varepsilon_{\lambda_i}:= [\hat{\H}_{\lambda_i}, \hat{\pi}_{\lambda_i}, \sgn D_{\lambda_i}] \in K^0(\kG)$ for the corresponding K-homology class. The following proposition is included for the sake of completeness and to make things as explicit as possible for our setting.

\begin{proposition}\label{prop:KKJLO}
The classes $\varepsilon_{\lambda_i}$ and $[p_{\lambda_i}]$, with $i=0,\ldots,N-1$, generate $K^0(\kG)$ and $K_0(\kG)$, respectively. For every $i,j=1,\ldots,N$, we have
\begin{equation}\label{pivarepsilon}
[p_{\lambda_j}]\times \varepsilon_{\lambda_i} = \tau_{\lambda_i} (p_{\lambda_j}) = \delta_{i,j}.
\end{equation}
In other words, the pairing of the JLO cocycle with $K_0(\kG)$ is given by the Kasparov product between the corresponding K-homology class and $K_0(\kG)$.
\end{proposition}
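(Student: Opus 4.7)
The plan is to first identify the generators of $K_0(\kG)$ explicitly, then use the universal coefficient theorem together with the index pairing to conclude the statement on $K^0(\kG)$. Since $\kG \simeq \bigoplus_{i=0}^{N-1} K(\H_{\lambda_i})$, additivity of $K_0$ and the fact that $K_0(K(\H_{\lambda_i})) \simeq \Z$ is generated by the class of any rank-one projection implies at once that $\{[p_{\lambda_i}]\}_{i=0}^{N-1}$ is a free $\Z$-basis of $K_0(\kG)\simeq \Z^N$.

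Next I would compute the right-hand side of \eqref{pivarepsilon}. Under the identification $KK(\C,\kG) \simeq K_0(\kG)$ recalled in item $(2)$, the element $[p_{\lambda_j}]$ corresponds to the Kasparov $(\C,\kG)$-module obtained from the projection $p_{\lambda_j}\in\kG$. Using the explicit formula for the Kasparov product of a K-theory class with a Fredholm module recorded in items $(4)$--$(5)$ (cf. \cite[18.5, 18.9]{Bla}), the product $[p_{\lambda_j}]\times \varepsilon_{\lambda_i}$ is given by the Fredholm index of the compression
\[
\hat{\pi}_{\lambda_i}(p_{\lambda_j})_{-}\,(\sgn D_{\lambda_i})_{+}\,\hat{\pi}_{\lambda_i}(p_{\lambda_j})_{+} : \hat{\pi}_{\lambda_i}(p_{\lambda_j})\hat{\H}_{\lambda_i,+} \to \hat{\pi}_{\lambda_i}(p_{\lambda_j})\hat{\H}_{\lambda_i,-}.
\]
Since $\sgn D_{\lambda_i}$ and $D_{\lambda_i}$ yield the same index (they have the same kernel and their ranges differ by a positive invertible factor), this index equals $\ind\bigl(\hat{\pi}_{\lambda_i}(p_{\lambda_j})_{-} (D_{\lambda_i})_{+} \hat{\pi}_{\lambda_i}(p_{\lambda_j})_{+}\bigr)$, which by Theorem \ref{th:ECCJLO}(ii) is exactly $\tau_{\lambda_i}(p_{\lambda_j})$. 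Combined with Theorem \ref{th:pairing}, this gives $[p_{\lambda_j}]\times \varepsilon_{\lambda_i}=\delta_{ij}$.

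Finally, for the generation statement on $K^0(\kG)$, I would invoke the universal coefficient theorem already used in the excerpt: since $\kG$ lies in the bootstrap class and $K_0(\kG)\simeq\Z^N$ is free, the natural map $\gamma_a : K^0(\kG) \to \Hom(K_0(\kG),\Z)$ is an isomorphism and $K^0(\kG) \simeq \Z^N$. By the identification of the index pairing $K^0 \times K_0 \to \Z$ with the Kasparov product $KK(\kG,\C) \times KK(\C,\kG) \to KK(\C,\C)\simeq \Z$, the just-established identity $[p_{\lambda_j}]\times \varepsilon_{\lambda_i}=\delta_{ij}$ says that $\{\varepsilon_{\lambda_i}\}$ is mapped by $\gamma_a$ to the basis of $\Hom(K_0(\kG),\Z)\simeq \Z^N$ dual to $\{[p_{\lambda_j}]\}$. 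Hence $\{\varepsilon_{\lambda_i}\}_{i=0}^{N-1}$ is itself a $\Z$-basis of $K^0(\kG)$, concluding the proof.

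The only genuinely delicate step is the second one, namely matching the abstract Kasparov product with the concrete Fredholm index of the compressed Dirac operator. The rest is bookkeeping with the UCT and with Theorems \ref{th:ECCJLO}--\ref{th:pairing}.
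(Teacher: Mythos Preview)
Your argument is correct, and the overall logic matches the paper's, but the route you take through the middle step differs from the paper's in a way worth noting.

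The paper does \emph{not} invoke a general ``Kasparov product equals Fredholm index of the compression'' principle. Instead it computes the product $[p_{\lambda_j}]\times\varepsilon_{\lambda_i}$ directly at the level of Kasparov modules: writing $F=\sgn D_{\lambda_i}=\begin{pmatrix}0&W^*\\W&0\end{pmatrix}$ with $W$ unitary, it uses the unitary $\begin{pmatrix}W&0\\0&\unit\end{pmatrix}$ to bring the Fredholm module into the standard form of item~(5), and then applies that composition formula verbatim. The result is $\delta_{i,j}$ times the class of a rank-one projection in $KK(\C,\C)\simeq\Z$. Only afterwards does the paper invoke Theorem~\ref{th:pairing} to identify this with $\tau_{\lambda_i}(p_{\lambda_j})$. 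Your approach reverses the emphasis: you quote the index-pairing interpretation of the Kasparov product as a black box, reduce to the compressed $D_{\lambda_i}$-index, and then feed this into Theorem~\ref{th:ECCJLO}(ii) and Theorem~\ref{th:pairing}. Both are valid; the paper's version is more self-contained with respect to the KK facts already listed in items (1)--(6), whereas yours is shorter but leans on an external statement (items (4)--(5) as written do not actually give the compressed-index formula you use; your citation of \cite[18.5, 18.9]{Bla} is doing the real work there, and you should say so explicitly rather than attributing it to items (4)--(5)).

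For the generation statements, you and the paper argue identically in substance: the pairing matrix $\delta_{i,j}$ forces $\{\varepsilon_{\lambda_i}\}$ and $\{[p_{\lambda_j}]\}$ to be dual $\Z$-bases once one knows $K_0(\kG)\simeq K^0(\kG)\simeq\Z^N$.
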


\begin{proof}
The fact that $F:= \sgn D_{\lambda_i}$ has degree 1 with $F^2=\unit$ and $F^*=F=F^{-1}$ and that $\hat{\pi}_{\lambda_i}$ has support in $\hat{\H}_{\lambda_i,+}$ implies that we may write $F=\begin{pmatrix}
0 & W^* \\ W & 0
\end{pmatrix}$ 
with a certain unitary $W$. Then
\[
\varepsilon_{\lambda_i} = \Big[ \hat{\H}_{\lambda_i}, \begin{pmatrix}
\hat{\pi}_{\lambda_i,+} & 0 \\ 0 & 0
\end{pmatrix},
\begin{pmatrix}
0 & W^* \\ W & 0
\end{pmatrix} \Big]
= \Big[ \hat{\H}_{\lambda_i}, \begin{pmatrix}
W \hat{\pi}_{\lambda_i,+}(\cdot) W^* & 0 \\ 0 & 0
\end{pmatrix},
\begin{pmatrix}
0 & \unit \\ \unit & 0
\end{pmatrix} \Big],
\]
where the last equality is a consequence of unitary equivalence via the unitary
$\begin{pmatrix}
W & 0 \\ 0 & \unit
\end{pmatrix}
\in B(\hat{\H}_{\lambda_i})$. 

Now the Kasparov product of this element with $[p_{\lambda_j}]$ can be calculated using property (5) above, as the involved algebras $\C$ and $\kG$ are trivially graded. Thus
\begin{align*}
\Big[ \hat{\H}_{\kG}, \phi_{p_{\lambda_j}} \oplus 0, \begin{pmatrix}
0 & \unit \\ \unit & 0
\end{pmatrix} \Big] 
&\times
\Big[ \hat{\H}_{\lambda_i},W \hat{\pi}_{\lambda_i,+}(\cdot) W^* \oplus 0,
\begin{pmatrix}
0 & \unit \\ \unit & 0
\end{pmatrix} \Big]\\
=& \Big[ \hat{\H}_{\lambda_i}, W\hat{\pi}_{\lambda_i,+}(\phi_{p_{\lambda_j}}(\cdot)) W^* \oplus 0,
\begin{pmatrix}
0 & \unit \\ \unit & 0
\end{pmatrix} \Big] \\
=& \delta_{i,j} \Big[ \hat{\H}, \phi_{WpW^*} \oplus 0,
\begin{pmatrix}
0 & \unit \\ \unit & 0
\end{pmatrix} \Big]
\end{align*}
as product $KK(\C,\kG)\times KK(\kG,\C) \ra KK(\C,\C)$. Here we used the identification of $\hat{\H}_{\lambda_i}$ with the standard $\Z_2$-graded Hilbert space and Hilbert $\C$-module $\hat{\H}$ and the definition of $p_{\lambda_i}$ and $\hat{\pi}_{\lambda_i,+}$ in the last line, where $p$ is as in \eqref{eq:p-def}. Now $p$ and hence $WpW^*$ are rank one projections and therefore 
$\Big[ \hat{\H}, \phi_{WpW^*} \oplus 0,
\begin{pmatrix}
0 & \unit \\ \unit & 0
\end{pmatrix} \Big]$
is the generator of $KK(\C,\C)\simeq \Z$. Thus
\[
\Big[ \hat{\H}_{\kG}, \phi_{p_{\lambda_j}} \oplus 0, \begin{pmatrix}
0 & \unit \\ \unit & 0
\end{pmatrix} \Big] 
\times
\Big[ \hat{\H}_{\lambda_i},W \hat{\pi}_{\lambda_i,+}(\cdot) W^* \oplus 0,
\begin{pmatrix}
0 & \unit \\ \unit & 0
\end{pmatrix} \Big]
= \delta_{i,j}, \quad i,j=0,\dots N-1.
\]
As $K_0(\kG)\simeq\Z^n$ and $K^0(\kG)\simeq\Z^n$, we therefore see that the elements $[p_{\lambda_j}]$ and $\varepsilon_{\lambda_i}$, with $i,j=0,\ldots,N-1$, generate $K_0(\kG)$ and $K^0(\kG)$ respectively. Together with Theorem \ref{th:pairing} we get
\[
[p_{\lambda_j}]\times \varepsilon_{\lambda_i} = \delta_{i,j} = \tau_{\lambda_i} (p_{\lambda_j}).
\]
\end{proof}

Since $[p_{\lambda_i}]$ and $\varepsilon_{\lambda_i}$, with $i=0,\dots,N-1$, generate $K_0(\kG)$ and $K^0(\kG)$, respectively, the maps $[\lambda_i] \mapsto [p_{\lambda_i}]$ and $[\lambda_i] \mapsto \varepsilon_{\lambda_i}$, $i=0,\dots,N-1$,
give rise to surjective group isomorphisms 
\begin{equation*}
\phi_{-1}^{G_\ell} : R^{\ell}(\Loop G) \to K_0(\kG)
\end{equation*}
and 
\begin{equation}\label{eq:phi1}
\phi_{1}^{G_\ell} : R^{\ell}(\Loop G) \to K^0(\kG),
\end{equation}
and we have $\phi_{-1}^{G_\ell} = \phi_{-1}^{\A_{G_\ell}}\circ \psi_{G_{\ell}}$.

We also recall the injective ring homomorphism
\begin{equation*}
\phi_0^{G_\ell}   =\phi_0^{\A_{G_\ell}}\circ \psi_{G_{\ell}}: R^{\ell}(\Loop G) \to KK(\kG,\kG),
\end{equation*}
which arises naturally from the loop group conformal nets as in \eqref{eq:phi0}. It follows from \eqref{eq:KK-action} that
\begin{equation}
\label{Eqptimesphi} 
[p_{\lambda_{\bar{j}}}]\times \phi_0^{G_\ell} ([\lambda_i]) = \sum_{k=0}^{N-1} \CN_{ij}^k  \, [p_{\lambda_{\bar{k}}}] \quad i,j \in \{0,\dots,N-1\}.
\end{equation}
Together with Proposition \ref{prop:KKJLO} we find
\begin{equation}
\label{Eqdefphi_0}
\phi_0^{G_\ell} ([\lambda_i]) \times \varepsilon_{\lambda_j} = \sum_{k=0}^{N-1} \CN_{ij}^k \, \varepsilon_{\lambda_k}\quad i,j \in \{0,\dots,N-1\}.
\end{equation}
For 
\begin{equation*}
x = \sum_{i=0}^{N-1} m_i [\lambda_{i}] \in R^{\ell}(\Loop G)
\end{equation*}
we define $\bar{x} \in R^{\ell}(\Loop G)$ by 
\begin{equation*}
\bar{x} = \sum_{i=0}^{N-1} m_i [\lambda_{\bar{i}}] .
\end{equation*}
Using this notation we see that for $x,y \in R^{\ell}(\Loop G)$ we have 
\begin{equation*}
\phi_0^{G_\ell} (x) \times \phi_{1}^{G_\ell} (y) =  \phi_{1}^{G_\ell} (xy) ,\quad  \phi_{-1}^{G_\ell} (y) \times \phi_0^{G_\ell} (x) = \phi_{-1}^{G_\ell} (\bar{x} y) . 
\end{equation*} 

We summarize the above discussion in the following theorem. 

\begin{theorem}\label{th:pairing2}
There exist necessarily unique surjective group isomorphisms
\begin{align*}
\phi_{-1}^{G_\ell} : R^{\ell}(\Loop G) \to K_0(\kG)\\
 \phi_{1}^{G_\ell} : R^{\ell}(\Loop G) \to K^0(\kG)
\end{align*}
such that $ \phi_{-1}^{G_\ell} ([\lambda_i]) = [p_{\lambda_i}]$ and $ \phi_{1}^{G_\ell} ([\lambda_i]) = \varepsilon_{\lambda_i}$, $i=0,\dots,N-1$. Moreover 
there exists a unique ring homomorphism
\[
\phi_0^{G_\ell} : R^{\ell}(\Loop G) \to KK(\kG,\kG)
\]
such that $ \phi_{-1}^{G_\ell} (y) \times  \phi_0^{G_\ell} (x) =  \phi_{-1}^{G_\ell} (\bar{x}y)$, for all $x,y \in R^{\ell}(\Loop G)$. $\phi_0^{G_\ell}$ is injective and satisfies 
$\phi_0^{G_\ell} (x) \times \phi_{1}^{G_\ell} (y) =  \phi_{1}^{G_\ell} (xy)$, for all $x,y \in R^{\ell}(\Loop G)$. Moreover, 
$$
\phi_{-1}^{G_\ell} ([\lambda_k]) \times \phi_0^{G_\ell}([\lambda_i]) \times \phi_{1}^{G_\ell} ([\lambda_j]) = \CN_{i j}^k
$$
for all $i,j,k \in \{0,\dots,N-1\}$.

\end{theorem}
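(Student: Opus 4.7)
The plan is to read off essentially everything from the analysis preceding the theorem statement, since the three paragraphs before the theorem already assemble most of the needed pieces. First, I would establish that $\phi_{-1}$ and $\phi_1$ are well-defined group isomorphisms. Proposition \ref{prop:KKJLO} exhibits $\{[p_{\lambda_i}]\}_{i=0}^{N-1}$ as generators of $K_0(\kG)$ and $\{\varepsilon_{\lambda_i}\}_{i=0}^{N-1}$ as generators of $K^0(\kG)$; combined with the duality $[p_{\lambda_j}]\times\varepsilon_{\lambda_i}=\delta_{ij}$ and the fact that both K-groups are isomorphic to $\Z^N$, these generating families are forced to be $\Z$-bases. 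Since $R^{\ell}(\Loop G)$ is the free abelian group on $\{[\lambda_i]\}$, the prescribed assignments extend uniquely to group isomorphisms, and uniqueness is immediate because they are specified on a basis.

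Next, I would construct $\phi_0$ via the universal coefficient theorem. Because $\kG$ lies in the bootstrap class and $\Ext^1_\Z(\Z^N,\Z^N)=0$, $\gamma_c$ is an isomorphism $KK(\kG,\kG)\simeq\mathrm{End}(K^0(\kG))$. The regular representation of the commutative ring $R^{\ell}(\Loop G)$ on itself, transported along $\phi_1$ to $\mathrm{End}(K^0(\kG))$ and then pulled back by $\gamma_c^{-1}$, defines $\phi_0$ as a ring homomorphism. The identity $\phi_0(x)\times\phi_1(y)=\phi_1(xy)$ is built into this construction; uniqueness of $\phi_0$ satisfying it follows from injectivity of $\gamma_c$ together with surjectivity of $\phi_1$, and injectivity of $\phi_0$ is automatic because the regular representation of a unital commutative ring is faithful.

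For the relation $\phi_{-1}(y)\times\phi_0(x)=\phi_{-1}(\bar x y)$ and the trilinear formula I would argue on generators and then extend by $\Z$-bilinearity. Using associativity of the Kasparov product together with $\phi_0([\lambda_i])\times\varepsilon_{\lambda_k}=\sum_m\CN_{ik}^m\varepsilon_{\lambda_m}$ and the pairing \eqref{pivarepsilon}, the triple product $[p_{\lambda_{\bar j}}]\times\phi_0([\lambda_i])\times\varepsilon_{\lambda_k}$ is easily computed and compared with the analogous expression arising from $\phi_{-1}([\lambda_{\bar i}][\lambda_{\bar j}])\times\varepsilon_{\lambda_k}$; the identities \eqref{EqFusion1}--\eqref{EqFusion3} reduce the comparison to a symmetry among fusion coefficients. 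Since the family $\{\varepsilon_{\lambda_k}\}$ separates points of $K_0(\kG)$ through the index pairing, equality in $K_0(\kG)$ itself follows, which is exactly \eqref{Eqptimesphi}; bilinearity then gives $\phi_{-1}(y)\times\phi_0(x)=\phi_{-1}(\bar x y)$ for all $x,y$, and the final trilinear formula is obtained by pairing once more with $\phi_1([\lambda_j])$ and invoking \eqref{pivarepsilon}. The main obstacle is purely combinatorial bookkeeping with the conjugate-class symmetries \eqref{EqFusion1}--\eqref{EqFusion3}; all analytic input (spectral triples, super-Sugawara construction, JLO index computations) has already been absorbed into Theorem \ref{th:pairing} and Proposition \ref{prop:KKJLO}, so this theorem is essentially a structural repackaging of those results via the universal coefficient theorem.
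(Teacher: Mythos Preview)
Your proposal is correct and follows essentially the same approach as the paper, which in fact offers no formal proof beyond the sentence ``We resume the above discussion in the following theorem''; the three paragraphs preceding the statement already establish $\phi_{-1}$, $\phi_1$ via Proposition~\ref{prop:KKJLO}, construct $\phi_0$ through the regular representation and the UCT isomorphism $\gamma_c$, and verify \eqref{Eqptimesphi} from the fusion symmetries \eqref{EqFusion1}--\eqref{EqFusion3}, exactly as you outline. Your description is an accurate reconstruction of that discussion, and your closing remark that the theorem is a structural repackaging of Theorem~\ref{th:pairing} and Proposition~\ref{prop:KKJLO} via the universal coefficient theorem is precisely the point.
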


\begin{remark}\label{RemarkPhi}
The maps $\phi_{1}^{G_\ell} $ and $ \phi_{-1}^{G_\ell} $ can be recovered from $\phi_0^{G_\ell} $ through the identities
\begin{equation}
\phi_{1}^{G_\ell} (x) =  \phi_0^{G_\ell} (x) \times \varepsilon_{\lambda_0},\quad  \phi_{-1}^{G_\ell} (x) = [p_{\lambda_0}] \times \phi_0^{G_\ell} (\bar{x}), \quad x \in R^{\ell}(\Loop G) .
\end{equation}
\end{remark}

\begin{remark}
Every *-endomorphism $\beta$ of $\kG$ induces an endomorphism $\beta_*$ of $K_0(\kG)$ as push-forward, defined by $\beta_*([p]):= [\beta(p)]$; it also induces an endomorphism $\beta^*$ of $K^0(\kG)$ as pull-back: for a given K-homology class 
\[
\varepsilon=\Big[\hat{\H},\pi\oplus 0, \begin{pmatrix}
0 & \unit \\ \unit & 0
\end{pmatrix}\Big] \in K^0(\kG)
\]
it is defined by
\[
\beta^*(\varepsilon):=
\Big[\hat{\H},\pi\circ\beta\oplus 0, \begin{pmatrix}
0 & \unit \\ \unit & 0
\end{pmatrix}\Big] \in K^0(\kG).
\]
According to \cite[Ex.18.4.2(a)-(b)]{Bla},  if $\rho$ is a covariant localized endomorphism of $C^*(\A_{G_\ell})$ and 
$\pi_0\circ \rho$ has finite statistical dimension then
\[
[p] \times \{\hat{\rho}\restriction_{\kG}\} = (\hat{\rho}\restriction_{\kG})_*([p]), \quad [p]\in K_0(\kG),
\]
and
\[
\{\hat{\rho}\restriction_{\kG}\}\times \varepsilon = (\hat{\rho}\restriction_{\kG})^*(\varepsilon), \quad \varepsilon\in K^0(\kG).
\]
By Remark \ref{RemarkPhi} and \eqref{eq:phi-1A1}, we can therefore express $\phi_{-1}^{G_\ell} $ and $\phi_{1}^{G_\ell} $ as 
\[
 \phi_{-1}^{G_\ell} \big(\psi_{G_{\ell}}^{-1}([\pi_0\circ \rho])\big) = (\hat{\bar{\rho}}\restriction_{\kG})_*([p_{\lambda_0}]),
\quad
\phi_{1}^{G_\ell} \big(\psi_{G_{\ell}}^{-1}([\pi_0\circ \rho])\big) = (\hat{\rho}\restriction_{\kG})^*(\varepsilon_{\lambda_0}).
\]
\end{remark}

\begin{remark} 
Let $\rho$ be a covariant localized endomorphism of $C^*(\A_{G_\ell})$ such that
$\pi_0\circ \rho$ has finite statistical dimension and suppose that moreover $\hat{\rho}$ preserves the differentiable subalgebra $\kone\subset \kG$. Then it induces a pull-back endomorphism of $HE^e(\kone)$, which we denote by $(\hat{\rho}\restriction_{\kone})^*$. 
If $\rho_i$ is a covariant localized endomorphism of $C^*(\A_{G_\ell})$ such that $\pi_0\circ \rho_i$ is equivalent to $\pi_{\lambda_i}$ then  the two explicit cocycles 
$(\hat{\rho}_i\restriction_{\kone})^*\tau_{\lambda_0}$ and $\tau_{\lambda_i}$ turn out to have the same pairing with $K_0(\kG)=K_0(\kone)$ in the sense of Theorem \ref{th:pairing}, although they do not coincide. This construction was studied in \cite{CHL} in a related context though with different underlying algebras.
\end{remark}

\section{Non-simply connected compact Lie groups and other CFT models} \label{sec:nonsymplyconnected}

In this section we discuss the generalization of the results in Sections \ref{sec:DHR} and \ref{sec:JLO} to other CFT models. The strategy is the following. We first give an abstract  formulation of the results in terms of conformal nets admitting suitable supersymmetric extensions. We then show that the results apply to a large class of CFT models including lattice models, loop group models associated to non-simply connected compact Lie groups, coset models, the moonshine conformal net having the monster group $\mathbb{M}$ as automorphism group and the even shorter moonshine net having the baby monster group $\mathbb{B}$ as automorphism group. 

\subsection{Conformal nets with superconformal tensor product.}
\label{subsec:supertensor}
In this subsection we will need the notion of graded-local conformal net (also called Fermi conformal net) which is a generalization of the one of conformal net when the axiom of locality is relaxed to graded-locality (or super-locality). These are the operator algebraic analogue of vertex operator superalgebras.  We will also need to consider the special case of superconformal nets in which the conformal symmetry admits a supersymmetric extension. Basically this means that the vacuum Hilbert space of the net carries a representation of the Neveu-Schwarz super-Virasoro algebra compatible with the diffeomorphism symmetry of the net. For the precise definitions we refer the reader to \cite{CHL,CKL}, cf. also \cite{CHKLX}. 

\begin{definition}
\label{def:supertensor} 
Let $\A$ be a conformal net. We say that $\A$ admits a {\it superconformal tensor product} if there is a graded-local net $\B$ with a graded 
positive-energy Ramond representation $\pi^\B$, \cf \cite[Thm. 2.13]{CHL}, satisfying the following properties:
\smallskip

\noindent $(i)$ The graded-local conformal net $\A \otimes \B$ is superconformal in the sense of \cite[Definition 2.11]{CHL}.
\smallskip

\noindent $(ii)$ The Ramond representation $\pi^\B$ satisfies the trace-class condition i.e. $e^{-tL_0^{\pi^\B}}$ is a trace-class operator for all $t>0$.
\smallskip

\noindent We will say that $\A\otimes \B$, or more precisely the pair $\big(\A\otimes \B, \pi^\B\big)$, is a superconformal tensor product for the (local) conformal net $\A$. 
\end{definition}

\begin{remark}
\label{remark:uniquesupertensor}
Note that the superconformal tensor products for a given conformal net $\A$ are far from being unique. In fact if $\big(\A\otimes \B, \pi^\B\big)$ is a superconformal tensor product for $\A$ 
and $\mathcal{C}$ is any superconformal net with a graded Ramond representation $\pi^{\mathcal C}$ satisfying the trace class condition, then 
$\big(\A\otimes (\B \hat{\otimes} \mathcal{C}), \pi^{\B \hat{\otimes} \mathcal{C}} \big)$, where $\pi^{\B \hat{\otimes} \mathcal{C}}$ is any graded subrepresentation of $\pi^\B \hat{\otimes}\pi^{\mathcal{C}}$, is again a superconformal tensor product for $\A$. Here, $\hat{\otimes}$ denotes the graded tensor product, see e.g. \cite[Subsec.2.6]{CKL}.
In particular one could take $\mathcal{C}=\A \otimes \B$ and $\pi^{\mathcal{C}}= \pi_0 \otimes \pi^\B$, where $\pi_0$ is the vacuum representation of $\A$. Accordingly, if the net $\A$ admits a superconformal tensor product then it admits infinitely many superconformal tensor products.This should be regarded as a benefit rather than a disadvantage as it allows for greater flexibility. In particular, in Proposition \ref{prop:strictpositivitysupertensor} we will make use of this fact. As we shall discuss later in Remark \ref{RemarkNonUniqueSCTensor} two different choices of the superconformal tensor product can be both considered natural from different point of view.
\end{remark}

\begin{remark}
\label{remark:VOAsupertensor} 
The notion of superconformal tensor product can be defined in a completely analogous way for vertex operator algebras.
\end{remark}

The motivating examples for the above definition are the conformal nets $\A_{G_{\ell}}$ considered in Section \ref{sec:DHR}. Let $\F$ be the graded local conformal net on $S^1$ generated by a real free Fermi field (the {\it free Fermi net}), see e.g. \cite{CHL,CKL}. Then for every positive integer $n$, the net $\F^n$ generated 
by $n$ real free Fermi fields can be defined inductively by $\F^1:=\F$ and $\F^{n+1}:= \F^{n} \hat{\otimes} \F$, $n\in\N$.
The super-Sugawara construction described in Section \ref{sec:JLO} shows that      
 the net $\A_{G_{\ell}}$ admits a superconformal tensor product $\big(\A_{G_{\ell}} \otimes \B, \pi^\B  \big)$, where $\B =\F^{d}$ with $d$ the dimension of $G$ and $\pi^\B= \pi_R$ is the minimal graded Ramond representation, \ie the unique irreducible Ramond representation of $\F^d$ if $d$ is even or the direct sum of the two inequivalent irreducible Ramond representations of $\F^d$ if $d$ is odd. Then, in this case, the superconformal nets  $\A_{G_{\ell}}\otimes \F^d$ are the  super-current algebra nets considered in \cite[Sec.6]{CHL}.
Note that for $d$ odd the irreducible Ramond representations of $\F^d$ are not graded and for this reason we have to chose a reducible Ramond representation $\pi_R$. However, the irreducibility of $\pi^\B$ in a superconformal tensor product $\big(\A \otimes \B, \pi^\B \big)$ is not necessary for the purposes of this paper. The important property is the trace-class condition for $\pi^\B$. 

Many other examples can be given thanks to the following two propositions whose proofs are rather straightforward and will be omitted here. 

\begin{proposition}
\label{prop:extsupertensor}
Let $\A$ be a conformal net on $S^1$ with a superconformal tensor product $\big(\A\otimes \B, \pi^\B\big)$. If $\tilde{\A}$ is an irreducible local extension of $\A$ then $\big(\tilde{\A} \otimes \B, \pi^\B\big)$ is a superconformal tensor product for $\tilde{\A}$. 
\end{proposition}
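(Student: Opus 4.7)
The plan is to verify the two conditions of Definition \ref{def:supertensor} in turn. Condition $(ii)$, namely the trace-class property of $\rme^{-tL_0^{\pi^\B}}$, is immediate since $\pi^\B$ enters the statement unchanged. The content is therefore entirely in condition $(i)$: we must show that $\tilde{\A}\otimes\B$ is superconformal in the sense of \cite[Definition 2.11]{CHL}.

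The first step will be to note that $\tilde{\A}\otimes\B$ is graded-local (with grading inherited from $\B$, acting trivially on $\tilde{\A}$), because $\tilde{\A}$ is local, $\B$ is graded-local, and graded-locality is preserved under graded tensor products. Moreover, since $\tilde{\A}$ is an irreducible local extension of $\A$, the net $\A\otimes\B$ is a graded-local subnet of $\tilde{\A}\otimes\B$ sharing the same vacuum Hilbert space after identifying $\H_\A\otimes\H_\B$ with its image inside $\H_{\tilde{\A}}\otimes\H_\B$. Next I would invoke the assumption that $\A\otimes\B$ is superconformal, which by \cite[Definition 2.11]{CHL} (see also \cite{CKL}) is equivalent to the existence of a Neveu--Schwarz super-Virasoro subnet $\SVirc\subset \A\otimes\B$ whose even part implements the diffeomorphism covariance of $\A\otimes\B$. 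Through the subnet inclusion $\A\otimes\B\subset \tilde{\A}\otimes\B$ this same $\SVirc$ sits inside $\tilde{\A}\otimes\B$ as a local (Neveu--Schwarz) subnet.

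The main step is to show that the representation of $\SVirc$ extends from $\H_\A\otimes\H_\B$ to the vacuum Hilbert space $\H_{\tilde{\A}}\otimes\H_\B$ of $\tilde{\A}\otimes\B$, and that the extended representation implements the diffeomorphism symmetry of $\tilde{\A}\otimes\B$. For the extension, one decomposes $\H_{\tilde{\A}}$ as a direct sum of DHR sectors of $\A$; correspondingly $\H_{\tilde{\A}}\otimes\H_\B$ decomposes into DHR sectors of $\A\otimes\B$, and DHR covariance transport yields a super-Virasoro action on each summand, hence globally a Neveu--Schwarz representation on $\H_{\tilde{\A}}\otimes\H_\B$ whose restriction to $\H_\A\otimes\H_\B$ recovers the original one. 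For the compatibility with the diffeomorphism covariance of $\tilde{\A}\otimes\B$, I would use that in any local extension $\tilde{\A}\supset\A$ the diffeomorphism representation of $\tilde{\A}$ restricted to $\H_\A$ coincides with that of $\A$; tensoring with $\B$, the same holds for $\tilde{\A}\otimes\B\supset \A\otimes\B$. Since the even part of the $\SVirc$-action already implements the diffeomorphism symmetry on $\H_\A\otimes\H_\B$ and both the super-Virasoro extension and the diffeomorphism extension to $\H_{\tilde{\A}}\otimes\H_\B$ are obtained by the same DHR covariance transport, the even part of the extended super-Virasoro still implements the diffeomorphism covariance of $\tilde{\A}\otimes\B$. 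The odd generators $G_n$ then provide the supersymmetric extension of this diffeomorphism representation on $\H_{\tilde{\A}}\otimes\H_\B$, so $\tilde{\A}\otimes\B$ is superconformal.

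The step I expect to be the main obstacle is the verification in this last paragraph that the super-Virasoro action, after being transported to DHR sectors of $\A\otimes\B$ appearing in $\H_{\tilde{\A}}\otimes\H_\B$, is compatible with the genuine diffeomorphism and graded-local structure of $\tilde{\A}\otimes\B$ (as opposed to just the subnet $\A\otimes\B$). This amounts to checking that the Neveu--Schwarz subnet $\SVirc\subset\tilde{\A}\otimes\B$ acts on the full vacuum Hilbert space in the way required by \cite[Definition 2.11]{CHL}, and uses crucially the fact that the extension $\tilde{\A}\supset\A$ is local, which guarantees that no extra grading or phases are introduced on the $\tilde{\A}$-factor.
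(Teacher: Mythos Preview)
The paper does not give a proof of this proposition at all; it simply states that the proof is ``rather straightforward'' and omits it. So there is no line-by-line comparison to make. Your plan is sound and would certainly establish the result, but it is considerably more elaborate than what the authors evidently have in mind.

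The intended one-line argument is presumably this: $\tilde{\A}\otimes\B$ is an irreducible graded-local extension of the superconformal net $\A\otimes\B$, and an irreducible graded-local extension of a superconformal net is again superconformal (the super-Virasoro subnet $\SVirc\subset\A\otimes\B\subset\tilde{\A}\otimes\B$ is the same, and its representation on the larger vacuum Hilbert space automatically implements the diffeomorphism covariance of the extension, since for irreducible local extensions the projective representation of $\Diff$ is inherited from the subnet). Condition $(ii)$ is unchanged, as you note.

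Your route via decomposing $\H_{\tilde{\A}}$ into DHR sectors of $\A$ and transporting the super-Virasoro action sector by sector is a correct way to make the extension of the $\SVirc$-representation explicit, but it introduces an unnecessary detour: you do not need to know anything about the sector decomposition, because the super-Virasoro operators already live inside the local algebras $\big(\A\otimes\B\big)(I)\subset\big(\tilde{\A}\otimes\B\big)(I)$ acting on $\H_{\tilde{\A}}\otimes\H_\B$, and the uniqueness of the inner implementation of diffeomorphism covariance in an irreducible extension does the rest. In particular, the issue you flag as ``the main obstacle'' is not really an obstacle once you observe that the diffeomorphism representation of $\tilde{\A}\otimes\B$ is, by general theory of conformal subnets, implemented by unitaries in the subnet $\A\otimes\B$ and hence by the even part of $\SVirc$. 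One small caution: your DHR-decomposition argument tacitly uses that $\H_{\tilde{\A}}$ decomposes discretely over $\A$, which is not assumed here (the proposition does not require complete rationality); the subnet argument avoids this entirely.
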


\begin{proposition}
\label{prop:tensorsupertensor}
If $\A_1$, $\A_2$ are conformal nets with superconformal tensor products $\big(\A_1\otimes \B_1, \pi^{\B_1}\big)$ and 
$\big(\A_2 \otimes \B_2, \pi^{\B_2}\big)$ respectively then, for every graded subrepresentation $\pi^{\B_1\hat{\otimes} \B_2}$ of $\pi^{\B_1} \hat{\otimes} \pi^{\B_2}$, 
$\big((\A_1 \otimes \A_2)\otimes(\B_1\hat{\otimes} \B_2), \pi^{\B_1 \hat{\otimes}\B_2}\big)$ is a superconformal tensor product for the local conformal net $\A_1\otimes \A_2$ with $\pi_0$ the vacuum representation. 
\end{proposition}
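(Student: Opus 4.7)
The plan is to verify the two defining conditions of a superconformal tensor product directly, using a suitable rearrangement of tensor factors and standard properties of graded tensor products.

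First I would address condition $(i)$. Since the $\A_i$ are ordinary (ungraded/even) local conformal nets, the iterated graded tensor product $(\A_1\otimes \B_1)\hat{\otimes}(\A_2\otimes \B_2)$ can be identified with $(\A_1\otimes \A_2)\otimes (\B_1\hat{\otimes}\B_2)$ in the natural way: the standard braiding/flip on the $\A_2$ and $\B_1$ factors is trivial in the graded sense because $\A_2$ is ungraded. By assumption each factor $\A_i\otimes \B_i$ is superconformal in the sense of \cite[Definition 2.11]{CHL}, hence carries a representation of the Neveu-Schwarz super-Virasoro algebra compatible with the diffeomorphism symmetry and having integer central charge. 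The graded tensor product of two superconformal nets is again superconformal with supercharges of the form $G_n = G_n^{(1)}\hat{\otimes}\unit + \Gamma_1\hat{\otimes} G_n^{(2)}$ and Virasoro generators $L_n = L_n^{(1)}\otimes \unit + \unit \otimes L_n^{(2)}$ (central charges adding); this is standard, as super-Virasoro survives graded tensor products. Transporting this superconformal structure through the rearrangement identification gives condition $(i)$ for $(\A_1\otimes \A_2)\otimes (\B_1\hat{\otimes}\B_2)$.

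Next I would check the trace-class condition $(ii)$ for $\pi^{\B_1\hat{\otimes}\B_2}$. The conformal Hamiltonian in the representation $\pi^{\B_1}\hat{\otimes}\pi^{\B_2}$ takes the split form $L_0 = L_0^{\pi^{\B_1}}\otimes \unit + \unit \otimes L_0^{\pi^{\B_2}}$, so
\[
\rme^{-tL_0} \;=\; \rme^{-tL_0^{\pi^{\B_1}}}\otimes \rme^{-tL_0^{\pi^{\B_2}}},\qquad t>0,
\]
which is a tensor product of trace-class operators (by hypothesis on each $\pi^{\B_i}$) and therefore trace-class. Restricting to the invariant subspace of any graded subrepresentation $\pi^{\B_1\hat{\otimes}\B_2} \subset \pi^{\B_1}\hat{\otimes}\pi^{\B_2}$, the conformal Hamiltonian restricts as well (it commutes with the projection onto the subrepresentation by covariance), and a compression of a positive trace-class operator remains trace-class. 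Hence $\rme^{-tL_0^{\pi^{\B_1\hat{\otimes}\B_2}}}$ is trace-class for all $t>0$, giving condition $(ii)$.

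The main point to be careful about is the compatibility of the two superconformal structures under the rearrangement of factors in step one, i.e.\ that the super-Virasoro representation on $(\A_1\otimes \B_1)\hat{\otimes}(\A_2\otimes \B_2)$ really does descend to one compatible with the diffeomorphism covariance of $(\A_1\otimes \A_2)\otimes (\B_1\hat{\otimes}\B_2)$, so that the resulting structure fits the axioms in \cite[Definition 2.11]{CHL}. This is essentially bookkeeping using that $\A_1,\A_2$ are ungraded, but it is the only non-formal step; the trace-class part is immediate. For this reason the proposition can, as the authors indicate, be stated without an explicit proof.
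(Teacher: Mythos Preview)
Your proposal is correct and follows exactly the natural verification one would expect; the paper in fact omits the proof entirely, stating that it is ``rather straightforward and will be omitted here.'' The only minor point you leave implicit is that $\pi^{\B_1}\hat{\otimes}\pi^{\B_2}$ (and hence any graded subrepresentation) is again a graded positive-energy Ramond representation in the sense of \cite[Thm.~2.13]{CHL}, but this is immediate from the tensor structure and is of the same bookkeeping nature as the rest.
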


Now, let $\A$ be a completely rational conformal net with $N$ irreducible sectors and let $\{\pi_0, \pi_1,\dots,\pi_{N-1}\}$ be a maximal family of irreducible locally normal representations of $\A$, with $\pi_0$ the vacuum representation. Following the general general notation of conformal nets in Section \ref{sec:DHR}, recall the definition of the surjective group isomorphism 
$$\phi^\A_{-1} : {\RRing} \to K_0 (\kA)$$ 
and the injective ring homomorphism 
$$\phi^\A_{0} : {\RRing} \to KK(\kA,\kA).$$ 
In order now to define a surjective group isomorphism $\phi^\A_1: \RRing \to K^0(\kA)$ by means of Dirac operators in analogy to the loop group setting as in \eqref{eq:phi1}, we assume that the completely rational conformal net $\A$ admits a superconformal tensor product $\big(\A\otimes \B,\pi^\B\big)$. We also assume
that $\A$ has a {\it trace-class representation theory}, namely that $e^{-tL_0^\pi}$ is trace class for all $t>0$ and all irreducible locally normal 
representations $\pi$ of $\A$.  All the completely rational conformal nets we know have a trace-class representation theory, \cf  \cite[Subsec.3.2.]{KL05}. Note that if $\A$ is a modular net in the sense of \cite{KL05} then $\A$ has a trace-class representation theory and it is conjectured that all completely rational nets have trace class representation theory \cite[Conjecture 4.18]{Kaw2015}. Now, for every locally normal representation $\pi$ of $\A$ with finite statistical dimension we consider the Ramond representation $\dot{\pi}$ of the superconformal net $\A\otimes \B$ defined by 
$\dot{\pi}:= \pi \otimes \pi^\B$. Then, by our assumptions, the conformal Hamiltonian 
$$L_0^{\dot{\pi}}=L_0^\pi \otimes \unit_{\H_{\pi^\B}} + \unit_{\H_\pi} \otimes L_0^{\pi^\B}$$ 
has non negative spectrum and $e^{-tL_0^{\dot{\pi}}}$ is trace-class for all $t>0$. Now, by the proof of \cite[Prop. 2.14]{CHL}, there is a unitary positive-energy representation of the Ramond super-Virasoro algebra on $\H_{\dot{\pi}}$ (cf. (\ref{eq:superVir})) by operators
$L_n^{\dot{\pi}}$, $G_r^{\dot{\pi}}$, $n,r \in \Z$, with central charge $c=c_\A + c_\B$, where $c_\A$ and $c_\B$ are the central charges of 
$\A$ and $\B$ respectively.  Then, the Dirac operator $D_{\pi}:=G_0^{\dot{\pi}}$ satisfies 
$D_{\pi}^2= L_0^{\dot{\pi}} -\frac{c}{24}\unit_{\H_{\dot{\pi}}} \geq (h_\B - \frac{c}{24})\unit_{\H_{\dot{\pi}}}$, where $h_\B$ is the lowest energy in the representation $\pi^\B$. Since $h_\B$ is an eigenvalue of $L_0^{\tilde{\pi_0}}$ we see that we always have $h_\B - \frac{c_\A+c_\B}{24} \geq 0$ and we say that the superconformal tensor product
$\big(\A\otimes \B,\pi^\B\big)$ for $\A$ has the {\it strict positivity property} if $h_\B - \frac{c_\A+c_\B}{24} > 0$. In this case 
the Dirac operator $D_{\pi}$ has trivial kernel for every locally normal representation $\pi$ of $\A$ with finite statistical dimension. 

\begin{proposition}  
\label{prop:strictpositivitysupertensor}
If a conformal net $\A$ admits a superconformal tensor product then it also admits a superconformal tensor product with the strict positivity property. 
\end{proposition}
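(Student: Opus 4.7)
The plan is to enlarge the given auxiliary net $\B$ by graded-tensoring with a second superconformal net $\mathcal{C}$ whose own Ramond ground state strictly exceeds its conformal anomaly, and to observe that the gap $h - c/24$ is additive under graded tensor products of superconformal Ramond representations. Concretely, I would pass from $(\A \otimes \B, \pi^\B)$ to the enlarged pair $(\A \otimes (\B \hat{\otimes} \mathcal{C}), \pi^\B \hat{\otimes} \pi^{\mathcal{C}})$, which by Remark \ref{remark:uniquesupertensor} is again a superconformal tensor product for $\A$, and then check strict positivity by adding two inequalities.

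The first step is to apply Remark \ref{remark:uniquesupertensor}: for any superconformal net $\mathcal{C}$ equipped with a graded Ramond representation $\pi^{\mathcal{C}}$ satisfying the trace-class condition, the pair $(\A \otimes (\B \hat{\otimes} \mathcal{C}), \pi^\B \hat{\otimes} \pi^{\mathcal{C}})$ is a superconformal tensor product for $\A$. Because $L_0$ on the graded tensor product is the sum of the two $L_0$'s and the central charges add, its lowest Ramond energy equals $h_\B + h_{\mathcal{C}}$ and its total central charge equals $c_\A + c_\B + c_{\mathcal{C}}$, so the strict positivity property for the enlarged pair is equivalent to
\[
\Big(h_\B - \tfrac{c_\A + c_\B}{24}\Big) + \Big(h_{\mathcal{C}} - \tfrac{c_{\mathcal{C}}}{24}\Big) > 0.
\]
Since the first summand is $\geq 0$ by hypothesis, the whole problem reduces to exhibiting a single superconformal net $\mathcal{C}$ with a graded, trace-class Ramond representation $\pi^{\mathcal{C}}$ satisfying $h_{\mathcal{C}} - c_{\mathcal{C}}/24 > 0$.

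The second step is to produce such a witness $\mathcal{C}$ from the super-current algebra nets already identified in the discussion below Definition \ref{def:supertensor}. Fix any connected simply connected compact simple Lie group $H$ of dimension $m$ with dual Coxeter number $h^\vee_H$, and any positive integer level $k$. Take $\mathcal{C} := \A_{H_k} \otimes \F^m$ with Ramond representation $\pi^{\mathcal{C}} := \pi_0 \otimes \pi_R$, where $\pi_0$ is the vacuum representation of $\A_{H_k}$ and $\pi_R$ is the graded Ramond representation of $\F^m$ (unique irreducible for $m$ even, the sum of the two inequivalent irreducibles for $m$ odd). Then $\mathcal{C}$ is superconformal, $\pi^{\mathcal{C}}$ is graded and trace-class since both tensor factors are (as used in the proof of Theorem \ref{th:pairing}), and using $c_{\mathcal{C}} = km/(k+h^\vee_H) + m/2$ and $h_{\mathcal{C}} = m/16$ a direct computation gives
\[
h_{\mathcal{C}} - \tfrac{c_{\mathcal{C}}}{24} = \tfrac{m}{16} - \tfrac{km/(k+h^\vee_H)}{24} - \tfrac{m}{48} = \tfrac{m h^\vee_H}{24(k+h^\vee_H)} > 0,
\]
which is in spirit the positivity estimate at the end of the proof of Proposition \ref{prop:Sugawara}. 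Inserting this $\mathcal{C}$ into the first step produces a superconformal tensor product for $\A$ with the strict positivity property.

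The only point that needs careful bookkeeping, rather than a genuine obstacle, is verifying that the Ramond super-Virasoro generators on the graded tensor product $\B \hat{\otimes} \mathcal{C}$ really decompose as the sum of those on $\B$ and those on $\mathcal{C}$, so that the lowest energies and central charges add as claimed; this is standard for graded tensor products of superconformal nets and is already implicit in the paper's construction of $\A \otimes \B$ via super-Sugawara.
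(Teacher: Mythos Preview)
Your proposal is correct and follows essentially the same route as the paper: both enlarge $\B$ by graded-tensoring with a super-current algebra net $\mathcal{C}=\A_{H_k}\otimes\F^m$ equipped with its vacuum-times-Ramond representation, verify $h_{\mathcal{C}}-c_{\mathcal{C}}/24>0$ by the same direct computation, and then use additivity of lowest energies and central charges under the graded tensor product to conclude. Your presentation is slightly more structured in that you first reduce the problem to exhibiting a single witness $\mathcal{C}$ with strict positivity and only then produce it, but the underlying argument and the choice of witness are identical to the paper's.
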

\begin{proof} Let $(\A \otimes \B, \pi^\B )$ be a superconformal tensor product for $\A$ and let $\mathcal{C}:= \A_{G_{\ell}}\otimes \F^d$ be a super-current algebra net, where $G$ is a simply connected compact simple Lie group, $d$ is the dimension of $G$ and the level $\ell$ is a positive integer.   Moreover, let $\pi^{\mathcal{C}}:=\pi_0 \otimes \pi_R$ with $\pi_R$ the minimal graded Ramond representation of $\F^d$ defined above. Then $\mathcal{C}$ is a superconformal net with central charge $c_{\mathcal{C}}= \frac{d}{2} + \frac{d{\ell}}{{\ell}+h^\vee}$ and 
$\pi^{\mathcal{C}}$ is a graded Ramond representation of $\mathcal{C}$ with lowest energy 
$h_{\mathcal{C}} = \frac{d}{16}$. Accordingly 
$$h_{\mathcal{C}} -\frac{c_{\mathcal{C}}}{24} = d\left(\frac{1}{24} -  \frac{{\ell}}{24({\ell}+h^\vee)} \right)>0 .$$
Now, let $\tilde{\B} := \B \hat{\otimes} {\mathcal{C}}$ and $\pi^{\tilde{\B}} := \pi^\B \hat{\otimes} \pi^{\mathcal{C}}$. 
Then, $\tilde{\B}$ has central charge  $c_{\tilde{\B}} = c_\B + c_{\mathcal{C}}$ and $\pi^{\tilde{\B}}$ 
has lowest energy $h_\B + h_{\mathcal{C}}$ so that 
\begin{eqnarray*}
h_{\tilde{\B}} - \frac{c_\A+c_{\tilde{\B}}}{24} &=& h_\B - \frac{c_\A+c_\B}{24} + h_{\mathcal{C}} -\frac{c_{\mathcal{C}}}{24} \\
&\geq& h_{\mathcal{C}} -\frac{c_{\mathcal{C}}}{24} >0. 
\end{eqnarray*}
Hence, $(\A \otimes \tilde{\B}, \pi^{\tilde{\B}})$ is a superconformal tensor product for $\A$ with the strict positivity property. 
\end{proof}

Let $(\A \otimes \B, \pi^\B )$ be a superconformal tensor product for $\A$ having the strict positivity property. We fix a projection $e_\B \in B(\H_{\pi^\B})$ onto any one-dimensional even lowest energy subspace of $\H_{\pi^\B}$. Let $\pi$ be a locally normal representation of $\A$ with finite statistical dimension and let $\pi'$ be the unique normal representation of the reduced universal C*-algebra $C^*_{\red}(\A)$ such that $\pi'\circ \pi_{\red} = \pi$. 
We define a degenerate representation $\hat{\pi}$ of $\kA$ on $\H_{\dot{\pi}} =\H_\pi \otimes \H_{\pi^\B}$ by 
$\hat{\pi}(x) := \pi'(x)\otimes  e_\B$.   In particular we can define pairwise unitarily inequivalent representations $\hat{\pi}_i$, $i=0,\dots,N-1$. 
Using the Dirac operators $D_{\pi_i}$, we can define the Fredholm modules $({\H}_{\tilde{\pi_i}},\hat{\pi}_{i},\sgn D_{\pi_i})$ for the C*-algebra 
$\kA$ and the corresponding K-homology classes $\varepsilon_i:= \left[({\H}_{\tilde{\pi_i}},\hat{\pi}_{i},\sgn D_{\pi_i})\right] \in K^0(\kA)$. 
Then, as in Proposition \ref{prop:KKJLO} it can be shown that the Kasparov product with the K-theory classes $[p_i]$ gives 
\begin{equation}
\label{eq:piKasparovepsilon}
[p_i] \times \varepsilon_j = \delta_{i,j}, \quad i, j = 0,\dots,N-1.
\end{equation}
Consequently, the K-homology classes $\varepsilon_i$ do not depend on the choice of the superconformal tensor product 
$(\A \otimes \B, \pi^\B )$. 
Now, the maps $[\pi_i] \mapsto \varepsilon_i$ give rise to a unique group isomomorphism $\phi_1^\A: \RRing \to K^0(\kA)$.   
As a consequence of Eq. (\ref{eq:piKasparovepsilon}) and of the results in \cite{CCH,CCHW}, we have the following theorem.

\begin{theorem} 
\label{abstractfusionKtheory}
Let $\A$ be a completely rational conformal net having $N$ irreducible sectors and admitting a superconformal tensor product. 
Then the surjective group isomorphism  $\phi_1^\A: \RRing \to K^0(\kA)$ does not depend on the choice of the superconformal product. Moreover, 
\[
\phi_0^\A(x)\times \phi_1^\A(y)=\phi_1^\A(xy), \quad x,y\in\RRing.
\]
In particular,
\[
\phi_{-1}^\A(x_k) \times \phi_0^\A (x_i) \times \phi_1^\A(x_j) = \CN_{i j}^k,
\]
where $x_i:=[\pi_i]$, $i=0,\ldots,N-1$, denotes the preferred basis of $\RRing$.
\end{theorem}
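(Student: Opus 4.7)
The plan is to mirror the strategy used in the proof of Theorem \ref{th:KK-action}, exploiting the perfect pairing between $K_0(\kA)$ and $K^0(\kA)$ supplied by Eq. (\ref{eq:piKasparovepsilon}) together with the description of $\phi_0^\A$ in terms of DHR endomorphisms established in \cite{CCH,CCHW}. First I would dispose of the independence claim: since $\kA\simeq \bigoplus_{i=0}^{N-1}K(\H_{\pi_i})$ lies in the bootstrap class, the universal coefficient theorem yields $K^0(\kA)\simeq \Hom(K_0(\kA),\Z)\simeq \Z^N$. Because the $[p_i]$ form a $\Z$-basis of $K_0(\kA)$ and Eq. (\ref{eq:piKasparovepsilon}) shows $[p_i]\times\varepsilon_j=\delta_{i,j}$, the K-homology classes $\varepsilon_j$ are uniquely characterized as the dual basis of $K^0(\kA)$ and therefore cannot depend on the choice of superconformal tensor product $(\A\otimes\B,\pi^\B)$ used in their construction. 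Consequently $\phi_1^\A$, defined on generators by $\phi_1^\A([\pi_i])=\varepsilon_i$, depends only on $\A$.

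For the multiplicativity $\phi_0^\A(x)\times\phi_1^\A(y)=\phi_1^\A(xy)$, bilinearity of the Kasparov product reduces the claim to the basis identities
\[
\{\hat{\rho}_i|_{\kA}\}\times\varepsilon_j = \sum_{k=0}^{N-1}\CN_{ij}^k\,\varepsilon_k,\qquad i,j=0,\ldots,N-1.
\]
To verify these in $K^0(\kA)\simeq \Z^N$, I would pair both sides on the left with an arbitrary $[p_m]\in K_0(\kA)$ and use associativity of the Kasparov product. By \cite[Thm.2.1(7)]{CCH}, $[p_m]\times\{\hat{\rho}_i|_{\kA}\}=[\hat{\rho}_i(p_m)]$; writing $[p_m]=[\hat{\rho}_{\bar{m}}(p_0)]$ as in step (4) of the proof of \cite[Thm.4.4]{CCHW} and expanding the composition through the fusion coefficients, one obtains
\[
[\hat{\rho}_i(p_m)] = [\hat{\rho}_i\hat{\rho}_{\bar{m}}(p_0)] = \sum_n \CN_{i,\bar{m}}^n[\hat{\rho}_n(p_0)] = \sum_n \CN_{i,\bar{m}}^{\bar{n}}[p_n].
\]
Pairing this with $\varepsilon_j$ gives $\CN_{i,\bar{m}}^{\bar{j}}$, while the right-hand side pairs with $[p_m]$ to give $\CN_{ij}^m$. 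The fusion-rule identities (\ref{EqFusion2}) and (\ref{EqFusion3}) together imply $\CN_{i,\bar{m}}^{\bar{j}}=\CN_{ij}^m$, so the two sides agree after pairing with every element of the basis $\{[p_m]\}$, and nondegeneracy of the pairing gives the desired equality in $K^0(\kA)$.

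The final assertion is then a direct consequence: by the multiplicativity just established, together with $\phi_{-1}^\A(x_k)=[p_k]$ and Eq. (\ref{eq:piKasparovepsilon}),
\[
\phi_{-1}(x_k)\times\phi_0(x_i)\times\phi_1(x_j) = [p_k]\times\phi_1^\A(x_ix_j) = [p_k]\times\sum_m \CN_{ij}^m\,\varepsilon_m = \CN_{ij}^k.
\]
The main obstacle is really administrative rather than conceptual: one must carefully keep track of conjugations and orderings in the fusion-rule manipulations, and verify that the results of \cite{CCH,CCHW} quoted above carry over without modification from the simply connected loop group case to the abstract setting of a completely rational conformal net admitting a superconformal tensor product. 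This is indeed the case, since those arguments rely only on complete rationality of $\A$ together with the general properties of $\kA$ already recorded above.
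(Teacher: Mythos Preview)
Your proposal is correct and follows essentially the same approach as the paper. The paper does not give a separate detailed proof of Theorem \ref{abstractfusionKtheory}; it simply states that the result follows from Eq.~(\ref{eq:piKasparovepsilon}) together with the results of \cite{CCH,CCHW}, referring to the proof of Theorem \ref{th:KK-action}, and your argument is precisely a faithful unfolding of that reference, including the same fusion-rule manipulations and the same appeal to the nondegenerate pairing coming from the UCT.
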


\begin{remark}
\label{remark:spectraltriples}
If $\A$ is a completely rational conformal net with $N$ irreducible sectors which has trace-class representation theory and admits a superconformal tensor product then, following 
the arguments in Section \ref{sec:JLO}, one can define a differentiable Banach algebra $\kA^1 \subset \kA$ which is a dense subalgebra 
of $\kA$ closed under holomorphic functional calculus so that $K_0(\kA^1)=K_0(\kA)$. Moreover, as in Theorem \ref{th:pairing} one can define 
even $\theta$-summable spectral triples $(\kA^1, \hat{\pi}_i, D_{\pi_i})$ whose JLO cocycles $\tau_i$ gives the same index pairing as the K-homology classes $\varepsilon_i$, namely $\tau_i(p_j)= [\pi_j] \times \varepsilon_i = \delta_{i,j}$, $i,j = 0,\dots,N-1$. Accordingly the entire cyclic cohomology classes $[\tau_i]$ give complete noncommutative geometric invariants for the irreducible sectors of $\A$. However, in contrast to the surjective group isomorphism 
$\phi_1^\A:\RRing \to K^0(\kA)$ the differential algebra $\kA^1$ and hence the corresponding cyclic cohomology classes $[\tau_i]$ may depend on the choice of the superconformal tensor product for $\A$. 
\end{remark}

\begin{remark} If $\A$ is an arbitrary completely rational conformal net with sectors $[\pi_i]$, $i=0,\dots, N-1$ then one can define elements 
$\varepsilon_i \in K^0(\kA)$ determined by Eq. (\ref{eq:piKasparovepsilon}). Then the map $[\pi_i] \mapsto \varepsilon_i$ defines a group isomorphism 
 $\phi_1^\A$ from $\RRing$ onto  $K^0(\kA)$ such that $\phi_0^\A(x)\times \phi_1^\A(y)=\phi_1^\A(xy)$ as in Theorem \ref{abstractfusionKtheory}. However, without assuming that $\A$ admits a superconformal tensor product and a trace-class representation theory we loose the natural interpretation of 
$\phi_1^\A$  in terms of Dirac operators, JLO cocycles and superconformal symmetry. 
\end{remark}

\begin{remark} Let $\A$ be a completely rational conformal net. Then $K^0(\kA)$ is a finitely generated free abelian group. Every ring structure on 
$\K^0(\kA)$ obtained by introducing a product $\star$ compatible with the group operation $+$ gives rise to a group homomorphism 
$\phi_0^\star : K^0(\kA) \to KK(\kA,\kA)$ determined by the condition $x\star y = \phi_0^\star(x) \times y$, $x,y \in K^0(\kA)$.  Conversely every group homomorphism
$\phi : K^0(\kA) \to KK(\kA,\kA)$ determines a ring structure on $K^0(\kA)$ through the product $x\star_\phi y =\phi(x) \times y$. 
With the special choice $\phi:= \phi_0^\A\circ \big(\phi_1^\A \big)^{-1}$, with $\phi_0^\A$ determined by the action of the DHR endomorphisms on $\kA$, the surjective group isomorphism $\phi^\A: \RRing \to K^0(\kA)$ becomes a surjective ring isomorphism.

\end{remark}

\subsection{Applications to chiral CFT models} 
\label{subsec:CFTmodels}
In this subsection we give various examples of conformal nets admitting superconformal tensor products. For the examples for which the net is known to be completely rational we can apply Theorem \ref{abstractfusionKtheory} and Remark \ref{remark:spectraltriples} so that the DHR fusion ring of the nets can be described in terms of K-theory and noncommutative geometry. All examples considered below, completely rational or not, have a vertex operator algebra analogue.

\begin{example} 
\label{ex:Uone} 
Let $\A_{\Uone}$ be the conformal net generated by a chiral $\Uone$ current (chiral free Bose field) considered in \cite{BMT}, see also 
\cite[Example 8.6]{CKLW}. Then it follows from the super-Sugawara construction in \cite{KT}, cf. Section \ref{sec:JLO}, 
\cite[Sec.5.9]{Kac1998} and also \cite[Sec.6]{CHL}, that $(\A_{\Uone} \otimes \F, \pR)$ is a superconformal tensor product for $\A_{\Uone}$, where $\F$ is the free Fermi net and $\pi_R$ is the corresponding graded Ramond representation.  More generally, for any positive integer $n$, the conformal net $\A_{\Uone^n}$ defined as the tensor product of  $n$ copies of $\A_{\Uone}$, admits the superconformal tensor product $(\A_{\Uone^n} \otimes \F^n, \pi_R)$. All the nets $\A_{\Uone^n}$ admit uncountably many irreducible sectors, see e.g. \cite{BMT}, and hence cannot be completely rational. 
\end{example}

\begin{example}
\label{ex:lattices}
Let $L$ be an even positive-definite lattice of rank $n$. Then one can define a corresponding conformal net $\A_L$ \cite{DX2006,Staszkiewicz1995}, which is completely rational by \cite[Corollary 3.19]{DX2006}.  The conformal net $\A_L$ is the operator algebraic analogue of the simple lattice vertex operator algebra $V_L$, see \eg \cite[Sect.5.5]{Kac1998} for the definition of $V_L$, see also \cite[Conjecture 8.17]{CKLW}. By construction, the net 
$\A_L$ is an irreducible local extension of $\A_{\Uone^n}$ and hence it admits a superconformal tensor product. In fact one can choose $(\A_L\otimes \F^n,\pi_R)$. 
\end{example}

\begin{remark}
\label{remark:lattice}
The lattice net $\A_L$ and its representations are related to the projective unitary positive-energy representations
of the loop group $\Loop \Uone^n$ corresponding to a central extension determined by $L$ through the group isomorphism
between $\Uone^n$ and the $n$-dimensional torus $\R L /L$, see \cite[Sect.9.5]{PS} and \cite[Sect.3]{DX2006}. Accordingly 
$\A_L$ may be considered as a loop group net for the non-simply connected group $U(1)$. The subnet 
$\A_{\Uone^n} \subset \A_L$ then corresponds to the restriction to the component of the identity $\big(\Loop \Uone^n\big)_1$ of $\Loop \Uone^n$.

\end{remark}

\begin{example}
\label{example:non-simplyconnected}
Let $G$ be a compact connected Lie group. Then 
\begin{equation}
G \simeq \left(G_1\times G_2 \times \cdots \times G_m \times \Uone^n \right)/ Z\, , 
\end{equation}
where $G_i$, $i=1,\dots,m$ is a connected simply connected compact simple Lie group and $Z$ is a finite subgroup of the center 
$Z(G_1)\times Z(G_2) \times \cdots \times Z(G_m) \times \Uone^n $ of 
$G_1\times G_2 \times \cdots\times G_m \times \Uone^n$. Of course, $G$ is semisimple if and only if $n=0$ \ie there is no torus factor. 
Now, let $BG$ be the classifying space of $G$.  A class $\ell \in H^4(BG,\Z)$ is called a level  and transgresses to a central extension 
${\Loop G}^{\ell}$ of $\Loop G$, \cite{FHT2011b,Henriques2015,Waldorf2015}. We say that the level $\ell \in H^4(BG,\Z)$ is {\it positive} if 
${\Loop G}^{\ell}$ admits irreducible positive-energy unitary representations \ie if there exists an irreducible positive-energy projective unitary representations at level $\ell$ \cite{Tol3}, see also Definition 1 and Definition 2 in \cite{Henriques2015}. If $\ell \in H^4(BG,\Z)$ is a positive level then one can define a local conformal net $\A_{G_{\ell}}$ as a simple current extension  
\begin{equation}
\A_{G_{\ell}}:=  \left(\A_{G_1,{\ell}_1}\otimes\A_{G_2,{\ell}_2} \dots \otimes \A_{G_m, {\ell}_m} \otimes \A_L\right) \rtimes Z\, , 
\end{equation}
where the positive integers (levels) $\ell_1,\cdots , {\ell}_m$ and the lattice $L$ are determined by $\ell \in H^4(BG,\Z)$, see 
\cite{Henriques2015, Henriques2016}. Equivalently, the net $\A_{G_{\ell}}$ can be defined through the vacuum representation $\lambda_0$ of 
${\Loop G}^{\ell}$ as in the 
simply connected case, see Eq. (\ref{Eq:A_{G_{ell}}net}). The locality property of the net follows from the 
disjoint-commutativity of the
``transgressive'' central extension ${\Loop G}^{\ell}$, see \cite[Sect.3.3]{Waldorf2015}. Now, $\A_{G_{i, {\ell}_i}}$, $i=1,\dots,m$ and $\A_L$ admit 
a superconformal tensor product and hence, by Proposition \ref{prop:tensorsupertensor}, the net 
\begin{equation}
\label{Eq:Atilde}
\tilde{\A}:= \A_{G_1,{\ell}_1}\otimes\A_{G_2,{\ell}_2} \dots \otimes \A_{G_m, {\ell}_m} \otimes \A_L
\end{equation}
admits a superconformal tensor product. Accordingly, since $\A_{G_{\ell}}=\tilde{\A}\rtimes Z$ is an irreducible local extension of $\tilde{\A}$, it also admits a superconformal tensor product by Proposition \ref{prop:extsupertensor}. Actually, the supersymmetric tensor product can be taken of the form 
$\big(\A_{G_{\ell}}\otimes \F^d, \pi_R  \big)$ where $d$ is the dimension of $G$, $\F^d$ is the graded-local conformal net generated by $d$ real free Fermi fields and $\pi_R$ is a graded Ramond representation of $\F^d$. Hence, if $\A_{G_{\ell}}$ is completely rational, equivalently if $\A_{G_{i, {\ell}_i}}$ is completely rational for all $i=1,\dots,m$, Theorem \ref{abstractfusionKtheory} applies and gives a generalization of the results in Section \ref{sec:DHR} to the case of non-simply connected compact Lie groups. 
\end{example}

\begin{remark} Similarly to Example \ref{example:non-simplyconnected} one can define a vertex operator algebra $V_{G_{\ell}}$, which is the analogue 
of the conformal net $\A_{G_{\ell}}$, as a simple current extension of the tensor product of an affine vertex operator algebra and a lattice vertex operator algebra, \cf \cite{DLM1996, Henriques2015, Henriques2016, Li2001}. 
\end{remark}

\begin{remark}
Let $G$ and $\ell \in H^4(BG,\Z)$ as in Example \ref{example:non-simplyconnected} and let $R^{\ell}(\Loop G)$ be the free abelian group generated by the irreducible positive-energy unitary representations of ${\Loop G}^{\ell}$, \ie the level $\ell$ irreducible positive-energy projective unitary representations of $\Loop G$, with $e^{i2\pi L^\lambda_0}$ a multiple of the identity, see \eg \cite{FHT2011b}. Note that the condition that 
$e^{i2\pi L^\lambda_0}$ is a multiple of the identity is not necessarily satisfied for general positive-energy representations, see \cite{Tol3}.
Then $R^{\ell}(\Loop G)$ admits a fusion ring structure which can be defined through the correspondence with the VOA simple modules of the vertex operator algebra $V_{G_{\ell}}$ or through the modular invariance property of characters as in the simply connected case. Moreover, as in the simply connected case, any  irreducible positive-energy unitary representation $\lambda$ of 
${\Loop G}^{\ell}$ is locally unitarily equivalent to the vacuum representation $\lambda_0$ and hence gives rise to an irreducible locally normal representation $\pi_\lambda$ of the conformal net $\A_{G_{\ell}}$ as in Eq. (\ref{eq:pilambdadef}) and, if $\A_{G_{\ell}}$ is completely rational, the map 
$\lambda \mapsto \pi_\lambda$ gives rise to a surjective group isomorphism $\psi_{G_{\ell}}: R^{\ell}(\Loop G) \to \tilde{\mathcal{R}}_{\A_{G_{\ell}}}$. It is expected that $\A_{G_{\ell}}$ is always completely rational and that $\psi_{G_{\ell}}$ is always a surjective ring isomorphism, i.e. that $\A_{G_{\ell}}$ satisfies the analogue of 
Assumption \ref{CFT-assumption} for every connected compact Lie group $G$ and every positive level $\ell \in H^4(BG,\Z)$, see e.g. 
\cite[Conjecture 4]{Henriques2015}, although this remains in general an important open problem. A positive solution is known for some special cases, e.g. for $G=\mathrm{SO}(3)= \mathrm{SU(2)}/\Z_2$ at every positive level, \cf \cite[Sect. 3]{Henriques2015} and \cite[Sect.5]{Bischoff2015}.  

\end{remark}

\begin{remark}\label{RemarkNonUniqueSCTensor} Let $\A := \A_{\Uone_1}$ be the conformal net associated with the loop group 
$\Loop \Uone$ at level $\ell = 1 \in H^4(B\Uone,\Z)$. Note that because of a different convention the corresponding chiral CFT is often called $\Uone$ at level $2$, \cf \cite[Subsec.1.2.]{Henriques2016}. Then, $\A$ is a rank one lattice net. It is known that $\A = \A_{\mathrm{SU}(2)_1}$, see \eg \cite{BMT} and  \cite[Subsec.1.2.]{Henriques2016}. It follows that if $\F$ is the free Fermi net as in Example  \ref{ex:Uone} then we can 
define two natural superconformal tensor products for $\A$, namely $\left( \A\otimes \F, \pi_R\right)$ and 
$\left( \A\otimes \F^3, \pi_R\right)$. Note that the Dirac operators for the first choice are related to the equivariant families 
 in \cite[Part.V]{FHT2011b} for $\Loop \Uone$ while the Dirac operators for the second choice are related to the equivariant families 
for $\Loop \mathrm{SU}(2)$. The corresponding twisted K-theory classes are related by the results in \cite{FHT2011b} and the isomorphism  $R^{1}(\Loop \Uone) \simeq R^{1}(\Loop \mathrm{SU}(2))$.  On the other hand these two different choices of the superconformal tensor product and the corresponding Dirac operators give through Thm. \ref{abstractfusionKtheory} the same K-homology classes in 
$K^0(\kA) \simeq K_0(\kA) \simeq \tilde{\mathcal{R}}_{\A}$. There are many other examples of this type. This shows that if one looks at a conformal net $\A$ alone without further structure there seems to be no natural choice of the superconformal tensor product and hence of the Dirac operators. The important point for our construction is that the group isomorphism $\phi^\A_1: \tilde{\mathcal{R}}_{\A} \to K^0(\kA)$ in Thm. \ref{abstractfusionKtheory}   does not depend on this choice.
\end{remark}

\begin{example}
\label{example:coset}
Let $G$ and $\ell \in H^4(BG,\Z)$ be as in Example \ref{example:non-simplyconnected} and let $H \subset G$ be a closed connected subgroup of $G$. 
Then the positive level $\ell \in H^4(BG,\Z)$ maps to a positive level ${\ell}' \in H^4(BH,\Z)$ which gives a smooth central extension 
${\Loop H}^{{\ell}'} \subset {\Loop G}^{\ell}$ of $\Loop H$. The restriction to ${\Loop H}^{{\ell}'}$ of the vacuum representation 
$\lambda_0$ of ${\Loop G}^{\ell}$ 
gives rise to an embedding $\A_{H_{{\ell}'}} \subset \A_{G_{\ell}}$ of the conformal net $\A_{H_{{\ell}'}}$ as a covariant subnet of  
$\A_{G_{\ell}}$. 
The corresponding coset subnet $\A_{H_{{\ell}'}}^c \subset \A_{G_{\ell}}$ can be defined by the relative commutant
\begin{equation}
\label{Eq:cosetsubnet}
\A_{H_{{\ell}'}}^c(I):= \A_{H_{{\ell}'}}(I)' \cap \A_{G_{\ell}}(I)\,, \quad I\in \I\,,
\end{equation}
see \cite{Xu2000b}, see also \cite{CKLW,Longo2003}. It follows from the Kazama-Suzuki superconformal coset construction \cite{KazamaSuzuki1,KazamaSuzuki2} 
that the coset conformal net $\A_{H_{{\ell}'}}^c$ admits a superconformal tensor product 
$\big(\A_{H_{{\ell}'}}^c\otimes \F^{d_G-d_H}, \pi_R \big)$, 
where $d_G$ and $d_H$ are the dimensions of $G$ and $H$ respectively. In various cases the coset net $\A_{H_{{\ell}'}}^c$ is known to be 
completely rational, see \cite{Longo2003,Xu1999,Xu2000b,Xu2001,Xu2005}. In all these cases 
Theorem \ref{abstractfusionKtheory} applies. Moreover, the completely rational coset conformal nets admit interesting irreducible extensions such as the mirror extensions defined in \cite{Xu2007}. Then all these extensions are completely rational and admit a superconformal tensor product so that Theorem \ref{abstractfusionKtheory} applies.  Many interesting CFT models can be described through completely rational coset conformal nets and their irreducible extensions. Various examples will be given here below. 
\end{example}

\begin{example}
\label{Example:VirasoroNets}
Let $\A_{\Vir,c}$ be the Virasoro net with central charge $c$ \cite{Carpi2004,KL04}. If $\A$ is a conformal net then the corresponding representation
of $\Diff$ gives rise to an irreducible subnet $\A_{\Vir,c} \subset \A$, the Virasoro subnet of $\A$. The value $c$ is determined by $\A$ and  $c$ is said to be the central charge of $\A$. If $c<1$ then, as a consequence of the Goddard-Kent-Olive construction \cite{GKO}, $\A_{\Vir,c}$ can be realized as a coset for an appropriate inclusion of loop group nets \cite{KL04}and it turns out to be completely rational. Accordingly Theorem \ref{abstractfusionKtheory} applies to all Virasoro nets with $c<1$ and, in fact, to all conformal nets with $c<1$. These are classified in \cite{KL04}.
\end{example}

\begin{example}
\label{Example:SuperVirasoroNets} The even (Bose) subnet of a $N=1$ super-Virasoro net with central charge $c<3/2$ can be realized as a coset of an inclusion of loop group nets and it turns out to be completely rational \cite[Sect. 6]{CKL}. Accordingly, the even subnets of the superconformal nets with $c<3/2$ are all completely rational and admit a superconformal tensor product so that Theorem \ref{abstractfusionKtheory} applies. These conformal nets have been classified in \cite[Sect.7]{CKL}. 
Similarly, the even subnet of a $N=2$ super-Virasoro net with central charge $c<3$ can be realized as a coset of an inclusion of loop group nets and it turns out to be completely rational \cite[Sect.5]{CHKLX}. Accordingly, the even subnets of the $N=2$ superconformal nets with $c<3$ are all completely rational and admits a superconformal tensor product so that Theorem \ref{abstractfusionKtheory} applies. These conformal nets have been classified in \cite[Sect.6]{CHKLX}. 
\end{example} 

\begin{example}
For every subfactor $N\subset M$ with Jones index $[M:N]<4$, M. Bischoff has constructed in \cite{Bischoff2015} a completely rational conformal net 
$\A_{N\subset M}$ whose representation category is braided tensor equivalent to the quantum double $D(N\subset M)$ and has shown the existence 
of vertex operator algebras $V_{N \subset M}$ with the analogous property.  The nets $\A_{N\subset M}$ are obtained from loop group nets by taking cosets, irreducible local extensions and tensor products. Accordingly, they all admit a superconformal tensor product and  Theorem \ref{abstractfusionKtheory} applies. Note that, for any of these nets, the DHR fusion ring $\tilde{\mathcal{R}}_{\A_{N \subset M}}$ coincides with the fusion ring
of the corresponding vertex operator algebra  $V_{N \subset M}$.
\end{example}

\begin{example} Let $n$ be a positive integer and let $\A_{\Vir,\frac12}^{\otimes n}$ be the conformal net with central charge $n/2$ defined as the tensor product of $n$ copies of the Virasoro net $\A_{\Vir,\frac12}$. Then, $\A_{\Vir,\frac12}^{\otimes n}$ is the tensor product of completely rational conformal nets admitting a superconformal tensor product and hence it is a completely rational conformal net admitting a superconformal tensor product. A conformal net $\A$ is said to be framed if it is an irreducible local extension of $\A_{\Vir,\frac12}^{\otimes n}$ for some positive integer $n$ 
\cite[Sect.4]{KL06}, see also \cite{KS}. Accordingly, every framed conformal net is completely rational and admits a superconformal tensor product so that Theorem \ref{abstractfusionKtheory} applies. Remarkable examples of framed conformal nets are the moonshine net $\A^\natural$ constructed by Kawahigashi and Longo in \cite{KL06}, see also \cite[Thm.8.15]{CKLW} and whose automorphism group is the monster group 
$\mathbb{M}$, and the even shorter net $\A_{VB^\natural_{(0)}}$ constructed in \cite[Thm.8.16]{CKLW}.
\end{example}

All the above examples show that the description of the representation theory of CFTs in terms of K-theory and noncommutative geometry goes far beyond the realm of loop groups. It would be very interesting if some of the above examples also admitted a topological description in terms of twisted K-theory in the spirit of FHT. This would give \eg a twisted K-theory description of the discrete series representations of $\Diff$ and of the representation theory of coset models, \cf \cite[page 2013]{EG2009} and \cite[page 323]{EG2013}. To this end a more direct and clear relation of the results in this paper in the case of loop groups and the FHT work is probably needed.

We end this section with a comment on the case of disconnected compact Lie groups, which are covered in the FHT setting through the analysis of twisted loop groups and their positive-energy representations, \cf \cite{FHT2011b}, but not in our present analysis. The point is that the usual definition of loop group nets generalizes to non-simply connected compact Lie groups but apparently does not generalize in a natural way to disconnected compact Lie groups, \cf \cite{Henriques2015,Henriques2016,Verril,Was2010b}. Note that in \cite{Verril,Was2010b} a fusion product on the representations of certain twisted loop groups is constructed from the point of view of subfactor theory through Connes fusions. The results there indicate that there is no conformal net associated to a twisted loop group. Rather the representations of the twisted loop group should be considered as twisted (soliton) 
representations of the corresponding untwisted loop group net and should be related to the representation theory of orbifold models which, presently, are not covered by our analysis.

\bigskip

\noindent\textbf{Acknowledgements.} 
We would like to thank the Kavli IPMU Japan for invitation and hospitality during the programme ``Supersymmetry in Physics and Mathematics" in March 2014, during which a substantial part of this work was achieved. S.C. would like to thank Andr\'e Henriques for a very useful email correspondence on the loop group nets associated to non-simply connected compact Lie groups and for explanations on his works \cite{Henriques2015,Henriques2016}. We furthermore thank the referee for helpful suggestions on a previous version of the manuscript.

\bigskip

\end{document}